\newcommand{\addresseshere}{%
	\enddoc@text\let\enddoc@text\relax
}
\newsavebox\myboxA
\newsavebox\myboxB
\newlength\mylenA
\newcommand*\xoverline[2][0.75]{%
    \sbox{\myboxA}{$\m@th#2$}%
    \setbox\myboxB\null
    \ht\myboxB=\ht\myboxA%
    \dp\myboxB=\dp\myboxA%
    \wd\myboxB=#1\wd\myboxA
    \sbox\myboxB{$\m@th\overline{\copy\myboxB}$}
    \setlength\mylenA{\the\wd\myboxA}
    \addtolength\mylenA{-\the\wd\myboxB}%
    \ifdim\wd\myboxB<\wd\myboxA%
       \rlap{\hskip 0.5\mylenA\usebox\myboxB}{\usebox\myboxA}%
    \else
        \hskip -0.5\mylenA\rlap{\usebox\myboxA}{\hskip 0.5\mylenA\usebox\myboxB}%
    \fi}
\newcommand{\ols}[1]{\mskip.5\thinmuskip\overline{\mskip-.5\thinmuskip {#1} \mskip-.5\thinmuskip}\mskip.5\thinmuskip} 
\newcommand{\olsi}[1]{\,\overline{\!{#1}}} 
\newcommand\closure[1]{
  \tctestifnum{\count@stringtoks{#1}>1} 
  {\ols{#1}} 
  {\olsi{#1}} 
}
\long\def\count@stringtoks#1{\tc@earg\count@toks{\string#1}}
\long\def\count@toks#1{\the\numexpr-1\count@@toks#1.\tc@endcnt}
\long\def\count@@toks#1#2\tc@endcnt{+1\tc@ifempty{#2}{\relax}{\count@@toks#2\tc@endcnt}}
\def\tc@ifempty#1{\tc@testxifx{\expandafter\relax\detokenize{#1}\relax}}
\long\def\tc@earg#1#2{\expandafter#1\expandafter{#2}}
\long\def\tctestifnum#1{\tctestifcon{\ifnum#1\relax}}
\long\def\tctestifcon#1{#1\expandafter\tc@exfirst\else\expandafter\tc@exsecond\fi}
\long\def\tc@testxifx{\tc@earg\tctestifx}
\long\def\tctestifx#1{\tctestifcon{\ifx#1}}
\long\def\tc@exfirst#1#2{#1}
\long\def\tc@exsecond#1#2{#2}
\newtheorem{theorem}{Theorem}
\newtheorem{lemma}[theorem]{Lemma}
\newtheorem{remark}[theorem]{Remark}
\newtheorem{cor}[theorem]{Corollary}
\newtheorem{conj}[theorem]{Conjecture}
\newtheorem{definition}[theorem]{Definition}
\newtheorem{notation}[theorem]{Notation}
\newtheorem*{rep@theorem}{\rep@title}
\newcommand{\newreptheorem}[2]{%
	\newenvironment{rep#1}[1]{%
		\def\rep@title{#2 \ref{##1}}%
		\begin{rep@theorem}}%
		{\end{rep@theorem}}}
\newcommand{\FF}{{\mathbb F}}
\newcommand{\fq}{{\FF_q}}
\newcommand{\ftwo}{{\FF_2}}
\newcommand{\ftwom}{{\FF_{2^m}}}
\newcommand{\ftwomstar}{{\FF^*_{2^m}}}
\newcommand{\Tr}{\operatorname{Tr}}
\begin{document}
	
	\title{Difference sets and tri-weight linear codes from trinomials over binary fields}

	\author[O. Ahmadi]{Omran Ahmadi}
	\address{Institute for Research in Fundamental Sciences, Iran}
	\email{oahmadid@ipm.ir}
	\author[M. Shafaeiabr]{Masoud Shafaeiabr}
     \address{Institute for Research in Fundamental Sciences, Iran}
     \email{masoudshafaei@ipm.ir}

\begin{abstract}
We confirm a conjecture of Cun Sheng Ding~\cite{Ding-Discrete} claiming that the punctured value-sets of a list of eleven trinomials over odd-degree extensions of the binary field give rise to difference sets with Singer parameters. In the course of confirming the conjecture, we show that these trinomials share the remarkable property that every element of the value-set of each trinomial has either one or four preiamges . We also give partial resolution of another conjecture of Cun Sheng Ding~\cite{Ding-Discrete} claiming that linear codes constructed from those eleven trinomials are tri-weight.
\end{abstract}

\maketitle
\let\thefootnote\relax\footnotetext{ \nonumber}
\section{Introduction}

Let $G$ be a cyclic group of order $v$. A subset $\mathcal{D}$ of size $k$ of $G$ is a cyclic difference set with parameter $(v,k,\lambda)$ if for  every non-identity element $g$ of $G$ the number of $(x,y)$ solutions of the equation $g=xy^{-1}$ with $x,y\in \mathcal{D}$ is exactly $\lambda$. 

Difference sets have many applications in the construction of combinatorial designs, error correcting codes, sequences with good correlation properties as well as the study of finite geometries and cryptographic functions.

As it is the case with the construction of many other combinatorial objects, the theory of finite fields is of great importance in the construction of  difference sets. Cyclic difference sets of special interest which are constructed from finite fields are the so called {\it{difference sets with Singer parameters}} for which either  
\[
v=\frac{q^n-1}{q-1},\;\;k=\frac{q^{n-1}-1}{q-1},\;\;\lambda=\frac{q^{n-2}-1}{q-1}
\]
or
\[
v=\frac{q^n-1}{q-1},\;\;k=q^{n-1},\;\;\lambda=q^{n-2}(q-1)
\]
where $q$ is a prime power and $n\ge 3$. 

In this paper, we are interested in the construction of cyclic difference sets and linear codes from polynomials and rational functions defined over the binary finite field $\ftwom$ with $2^m$ elements where $m$ is an odd number. Our main result is the following theorem stated as Conjecture 36 in ~\cite{Ding-Discrete}. 

\begin{theorem}\label{Ding-conj-diff}
 Let $m\ge 5$ be an odd number, $\ftwom$ denote the degree-$m$ extension of the binary field $\ftwo$, and $\ftwom[x]$ denote the univariate polynomial ring over $\ftwom$. Furthermore, let $\ftwomstar$ denote the multiplicative subgroup of $\ftwom$, and for a given $f\in\ftwom[x]$, $D(f)^*$ denotes the punctured value-set it, that is,
 	\[D{{(f)}}^*=\{f(x):x\in \mathbb{F}_{{2}^{m}}\}\setminus\{0\}.\]
 
  Then the punctured value-set of any of trinomials appearing below is a difference set in $(\ftwomstar,\times)$ with singer parameters $(2^m-1, 2^{m-1}, 2^{m-2})$:
\begin{itemize}
	\item[(a)] $f_1(x)=x^{2^m-17}+x^{(2^m+19)/3}+x$.
	\item[(b)] $f_2(x)=x^{2^m-2^{m-4}-1}+x^{2^m-(2^{m-2}+4)/3}+x$.
	\item[(c)] $f_3(x)= x^{2^m-3}+x^{2^{(m+3)/2}+2^{(m+1)/2}+4}+x$.
	\item[(d)]$f_4(x)= x^{2^{m}-2^{(m-1)/2}-1}+x^{2^{(m-1)}-2^{(m-1)/2}}+x$.
	\item[(e)]$f_5(x)=x^{2^m-2-(2^{m-1}-2^2)/3}+x^{2^m-2^2-(2^m-2^3)/3}+x$.
	\item[(f)] $f_6(x)=x^{2^m-2^{(m+1)/2}+2^{(m-1)/2}}+x^{2^m-2^{(m+1)/2}-1}+x$.
	\item[(g)]$f_7(x)=x^{2^m-3(2^{(m+1)/2}-1)}+x^{2^{(m+1)/2}+2^{(m-1)/2}-2}+x$.
	\item[(h)]$f_8(x)=x^{2^m-2^{m-2}-1}+x^{2^{m-1}-2}+x$.
	\item[(i)] $f_9(x)= x^{2^{m}-2^{(m+3)/2}-3}+x^{2^{(m+1)/2}+2}+x$.
	\item[(j)]$f_{10}(x)=x^{2^m-3(2^{(m-1)/2}+1)}+x^{2^{m-1}-1}+x$.
	\item[(k)]$f_{11}(x)=x^{2^m-5}+x^6+x$.
\end{itemize}	
	
\end{theorem}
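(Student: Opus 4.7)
The plan is to combine a uniform structural property of the eleven trinomials with the classical character-theoretic criterion for Singer difference sets. The central first step is to establish the \emph{one-or-four preimage property} advertised in the abstract: for each $f_i$, $f_i^{-1}(0)=\{0\}$ in $\ftwom$, and every $d\in D(f_i)^*$ satisfies $|f_i^{-1}(d)|\in\{1,4\}$. Writing $a$ and $b$ for the numbers of fibres of sizes $1$ and $4$ in $D(f_i)^*$, one then has $a+4b=2^m-1$ and $|D(f_i)^*|=a+b=2^m-1-3b$, so it only remains to pin down $b=(2^{m-1}-1)/3$; this integer-valued count, possible precisely because $m$ is odd, should be extracted from the same orbit structure that proves the one-or-four property.

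To prove the one-or-four property, I would pass to a rational-function representation: on $\ftwomstar$, an exponent of the form $2^m-k$ stands for $x^{-k}$, so each trinomial becomes an explicit rational function, and the collision equation $f_i(x)=f_i(y)$ for $x,y\in\ftwomstar$ becomes, after clearing denominators, a symmetric polynomial relation $F_i(x,y)=0$. Symmetry forces the factorisation $F_i(x,y)=(x+y)H_i(x,y)$, so the non-trivial collisions lie on the affine plane curve $H_i=0$. For each $f_i$ separately, the task is to exhibit an explicit involution (or a small pair of commuting involutions) of $\ftwomstar$ that preserves the fibres of $f_i$, with the joint orbits on each non-trivial fibre forming a $2$-dimensional $\ftwo$-affine coset of size four. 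This is the calculational heart of the paper and will require a separate analysis for each of the eleven trinomials, though the involutions should fall into a small number of families dictated by the shapes of the exponents.

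For the difference-set conclusion, the plan is to invoke the standard criterion that a set $D\subset\ftwomstar$ of size $2^{m-1}$ is a Singer difference set with parameters $(2^m-1,2^{m-1},2^{m-2})$ if and only if $\bigl|\sum_{d\in D}\chi(d)\bigr|^2=2^{m-2}$ for every non-trivial multiplicative character $\chi$ of $\ftwomstar$. With the preimage structure in hand, $\sum_{x\in\ftwomstar}\chi(f_i(x))=S_i(\chi)+3T_i(\chi)$, where $S_i(\chi)=\sum_{d\in D(f_i)^*}\chi(d)$ is the target sum and $T_i(\chi)$ is its restriction to the $4$-to-$1$ values. The commuting involutions produced in the previous step should allow one to rewrite $T_i(\chi)$ in terms of $S_i(\chi)$ and of Gauss sums of linearized polynomials, reducing the required identity $|S_i(\chi)|^2=2^{m-2}$ to classical Weil-type evaluations; equivalently, these involutions often identify $D(f_i)^*$ with an explicit Singer difference set of the form $\{x\in\ftwomstar:\Tr(g_i(x))=1\}$ for a polynomial $g_i$ extracted from the rational normal form of $f_i$. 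The principal obstacle throughout is the case-by-case hunt for the correct commuting involutions; once those are identified, both the cardinality count and the character-sum bound should follow by routine manipulations.
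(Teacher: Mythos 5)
Your plan has the right skeleton in two places (the counting $a+4b=2^m-1$, $|D(f_i)^*|=a+b$, and the character-sum criterion $|\sum_{d\in D}\chi(d)|^2=2^{m-2}$ are both sound), but both of the steps you defer are genuine gaps, and the mechanism you propose for the first one is the wrong shape. You postulate that each $4$-element fibre is the orbit of a pair of commuting involutions of $\ftwomstar$, i.e.\ a coset of a $2$-dimensional $\ftwo$-affine space, and that the only work is to hunt for these involutions. For these trinomials no such global involutions exist: in the paper the collision relation $F_i(x,y)=(x+y)H_i(x,y)$ has $H_i$ splitting further into two or three components, and a fibre of size $4$ consists of $x$ itself together with one collision coming from a quadratic-type component and two or three from a cubic-type component; the $1$-or-$4$ property is exactly the statement that the cubic piece has three rational solutions precisely when the quadratic piece has none (proved via Berlekamp's trace criterion for cubics, point counts on curves like $y^2+y=x^5$ and $y^6+y^5=x^{17}$, and, for the two ``$\sigma$'' families, the $(0,1,3)$-map structure of $x^{\sigma+1}+x$ from Bluher/Helleseth--Kholosha together with functional equations such as $f(x)^{\sigma+1}=g(h(x^{-(\sigma+1)/2}))+1$). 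None of this is delivered by an involution ansatz, so the ``calculational heart'' is not merely unfinished, it would have to be replaced. You also assert $f_i^{-1}(0)=\{0\}$ without argument (it needs $m$ odd), and you omit the reduction, via $x\mapsto x^t$ with $\gcd(t,2^m-1)=1$, of the eleven trinomials to four value-set representatives, without which you face eleven separate curve analyses.

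The second gap is the difference-set conclusion itself. Knowing that $f_i$ is a $(0,1,4)$-map and $|D(f_i)^*|=2^{m-1}$ says nothing about the character sums; the identity $\sum_x\chi(f_i(x))=S_i(\chi)+3T_i(\chi)$ is true but the promised reduction of $T_i(\chi)$ to ``classical Weil-type evaluations by routine manipulations'' is precisely the hard, unproved content. The paper never evaluates a character sum: it shows that $D(f)^*$, possibly after raising to a power coprime to $2^m-1$, is the complement in $\ftwomstar$ of a known Singer difference set with parameters $(2^m-1,2^{m-1}-1,2^{m-2}-1)$ --- Maschietti's value set of $x^6+x$, a Dillon--Dobbertin set $(x+1)^d+x^d+1$ with $d=241$, or (for the two $\sigma$-families) the trace-one set $T_1$ via $(D(f)^*)^{\sigma+1}=T_1$ --- and then invokes the complementation theorem. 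Your parenthetical alternative, identifying $D(f_i)^*$ with a set $\{x:\Tr(g_i(x))=1\}$, is indeed what happens for two of the four classes, but you give no mechanism for it, and for the other two classes the disjointness proofs require showing that certain auxiliary curves have no points over odd-degree extensions of $\ftwo$ (via writing the defining polynomial as $g^2+gh+h^2$), another ingredient absent from the proposal. As it stands, both pillars of the argument are placeholders.
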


In the course of proving the above theorem, we prove that the polynomials appearing above share the remarkable property that any element of the value-set has 1 or 4 preimages in $\ftwom$. This naturally leads to the partial resolution of another conjecture of Cun~Sheng Ding stated as Conjecture 37~\cite{Ding-Discrete} and appearing as Theorem~\ref{Ding-conj-codes} in this paper.

This paper is organized as follows. We end this section with a list on conventions and notations. In Section~\ref{Prem}, we gather some preliminary results. In Section~\ref{Value-set-equivalence}, we show that the proof of the main result follows if we prove that the punctured value-set of four rational functions give rise to difference set with Singer parameters. In Section~\ref{Main-result-proof} we prove the main result. Section~\ref{Codes} contains our partial resolution of a conjecture of Cun Sheng Ding about tri-weight linear codes constructed from the above polynomials. Finally, Section~\ref{Conclusions} contains our concluding remarks.
\subsection{Conventions and Notations}
In this subsection we 
\begin{itemize}
	
\item Throughout the paper $m$ will be an odd integer, and $\sigma=2^{\frac{m+1}{2}}$.	
	
\item $\fq$, $\fq^*$, and ${\xoverline{\fq}}$ denote the finite field with $q$ elements, the multiplicative subgroup of $\fq$, and the algebraic closure of $\fq$, respectively. 	

\item For $x\in\ftwom$, $\Tr(x)$ denotes the absolute trace from $\ftwom$ to $\ftwo$, that is,
\[\Tr(x)=x+x^2+x^{2^2}+\cdots+x^{2^{m-1}}.\]

\item We denote by $T_n$ the set of elements of $\ftwom$ whose $n$-th power is of trace one, that is,
\[T_n=\{x\in GF(2^m)| \Tr (x^n)=1\},\]

\item  We denote the set of elements of trace zero by $T_0$.

\item If $X$ and $Y$ are two sets and $f:X \xrightarrow{} Y$ is a function, then for each $y \in Y$, the set of elements whose image under $f$ is denoted by $P_{f}(y)$, that is, 
	\[P_{f}(y)=\{x \in X|f(x)=y\}.\]
	
\item  If $X$ and $Y$ are two sets, $f:X \xrightarrow{} Y$ is a function and for each $y \in Y$, $|P_{f}(y)|\in\{0,1,a\}$ for some positive integer $a$, then $f$ is called a $(0,1,a)$-map. In this paper, a will be either 3 or 4.
	
\item For a set $S$, $|S|$ denotes the number of elements of $S$.

\item Given a function $f:\ftwomstar\longrightarrow \ftwomstar$, the set
	\[D{{(f)}}^*=\{f(x):x\in \ftwomstar\}\setminus\{0\},\]is called the punctured value-set of $f$. Notice that since all the polynomials appearing in Theorem~\ref{Ding-conj-diff} vanish at zero, $D(f)^*$ obtained using  this new definition for a polynomial $f$ appearing in the theorem coincides with $D(f)^*$ obtained from previous definition given above. Thus, for the rest of the paper, we will work with this new definition.

\item 	If $G$ is a group and $\mathcal{D}\subset G$, then we denote by $\mathcal{D}^k$  the set of $k$-th power of the elements of $\mathcal{D}$, that is,
\[
\mathcal{D}^k=\left\{d^k|d\in\mathcal{D}\right\}.
\]

\item A multi-set is denoted by double braces $\{\{\}\}$.

\item If $C$ is a curve defined over $\ftwom$, then $\#C(\ftwom)$ denotes the number of the affine $\ftwom$-rational points on $C$. 

\end{itemize}

\section{Preliminaries}\label{Prem}
In this section we gather and prove some results which will be used in the rest
of the paper.

\subsection{Curves with no finite point over odd degree extensions of $\ftwo$}\label{Curves}
In this paper, by a curve $C$ over the finite field $\fq$ we mean an either reducible or irreducible  algebraic set of dimension one over $\fq$. Furthermore, in this paper we will be interested in, mostly, with the combinatorial properties of the curves such as the number of the points  and will not be interested in geometric properties such as irreducibility and smoothness and so we will not try to prove the irreducibility of them even though many of the algebraic sets that we encounter will be irreducible.

Now, let $\fq[x,y]$ and ${\xoverline{\fq}}[x,y]$ denote the bivariate polynomial rings over $\fq$ and its algebraic closure, respectively. 
Furthermore, let the map $\psi$ be as follows
\begin{eqnarray}
\psi:&{\xoverline{\fq}}[x,y]&\longrightarrow {\xoverline{\fq}}[x,y]\\\nonumber
&\sum a_ib_j x^iy^j&\longrightarrow \sum (a_ib_j)^qx^iy^j.
\end{eqnarray}
We notice that the map $\psi$ is a ring homomorphism, and also if $f\in \fq[x,y]$, then $\psi(f)=f$. This fact leads to  the following lemma.


\begin{lemma}\label{QuadraticForm}
Let $f\in \ftwo[x,y]$ be an irreducible polynomial over $\ftwo$ which is reducible over $\FF_{4}$. Then there are nonzero polynomials $g,h\in\ftwo[x,y]$ such that $f=g^2+gh+h^2$.
\end{lemma}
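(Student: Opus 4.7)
The plan is to use the Frobenius map $\psi$ (restricted here to act on $\FF_4$-coefficients as squaring, so $\psi^2=\mathrm{id}$) to decompose $f$ into a pair of conjugate factors over $\FF_4$ and then identify the quadratic form $g^2+gh+h^2$ as the norm form from $\FF_4$ to $\FF_2$.

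Since $f$ is reducible over $\FF_4$, pick an irreducible factor $F\in\FF_4[x,y]$ of $f$. Because $\psi$ is a ring homomorphism and $\psi(f)=f$, $\psi(F)$ is again an irreducible factor of $f$ over $\FF_4$. The orbit of $F$ under $\psi$ has size $1$ or $2$. First I would rule out size $1$: writing any $F\in\FF_4[x,y]$ uniquely as $F=G_1+\omega G_2$ with $G_1,G_2\in\FF_2[x,y]$ and $\omega$ a primitive cube root of unity, the equation $\psi(F)=\mu F$ for $\mu\in\{1,\omega,\omega^2\}$ forces $F$ to equal a unit of $\FF_4$ times a polynomial in $\FF_2[x,y]$ (the three cases give $G_2=0$, $G_1=0$, or $G_1=G_2$, respectively). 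In each case $F$ would have an associate in $\FF_2[x,y]$ dividing $f$ in $\FF_2[x,y]$, contradicting irreducibility of $f$ over $\FF_2$.

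Hence the orbit has size $2$, so $F\psi(F)$ is $\psi$-invariant and therefore lies in $\FF_2[x,y]$. Since $F\psi(F)$ is a divisor of $f$ in $\FF_2[x,y]$ and $f$ is irreducible over $\FF_2$, we get $f=c\cdot F\psi(F)$ for some $c\in\FF_2^*$, i.e.\ $f=F\psi(F)$. Writing $F=G_1+\omega G_2$ with $G_1,G_2\in\FF_2[x,y]$, the direct computation
\[
F\psi(F)=(G_1+\omega G_2)(G_1+\omega^2 G_2)=G_1^2+(\omega+\omega^2)G_1G_2+\omega^3 G_2^2=G_1^2+G_1G_2+G_2^2
\]
gives the desired identity $f=g^2+gh+h^2$ with $g=G_1$, $h=G_2$. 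Finally, if either $G_1$ or $G_2$ were zero, then $F$ itself would be a unit of $\FF_4$ times an element of $\FF_2[x,y]$, again contradicting the $\FF_2$-irreducibility of $f$, so both $g$ and $h$ are nonzero.

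The only delicate step is the case analysis ruling out a $\psi$-fixed irreducible factor; the rest is bookkeeping with the norm form of $\FF_4/\FF_2$. I would write the fixed-factor step compactly by noting that the three possible scalars $\mu$ correspond to the three nonzero $\FF_2$-lines in $\FF_4$ (spanned by $1$, $\omega$, $\omega+1$), each of which lets us extract a unit making $F$ an $\FF_2$-polynomial up to a scalar, and then invoking unique factorization in $\FF_2[x,y]$ to get the contradiction with the hypothesized irreducibility of $f$ over $\FF_2$.
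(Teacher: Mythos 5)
Your proof is correct and takes essentially the same route as the paper's: split off an $\FF_{4}$-factor, pair it with its image under the conjugation map $\psi$, recognize the product of the two conjugates as the norm form $g^2+gh+h^2$ with $g,h\in\ftwo[x,y]$, and use irreducibility of $f$ over $\ftwo$ to force equality. The only cosmetic difference is that you work with an irreducible factor and rule out a $\psi$-fixed associate class by the orbit/unit analysis, whereas the paper takes an arbitrary proper divisor $f_1=g+th$ and argues directly that $h\neq 0$, that $g,h$ are coprime, and hence that $f_1$ and $\psi(f_1)$ share no factor.
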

\begin{proof}
Let $f_1\in \FF_4[x,y]$ be a proper divisor of $f$ in $\FF_4[x,y]$. If we let $\FF_4=\ftwo(t)$ where $t^2=t+1$, then there are two polynomials $g$ and $h$ in $\ftwo[x,y]$ such that $f_1=g+th$. Obviously, $h\neq 0$ and the polynomials $g$ and $h$ have no common factor as otherwise $f$ would have a proper divisor in $\ftwo[x]$ which is a contradiction. Furthermore, since the map $\psi$ as defined above is a homomorphism which fixes $\ftwo[x,y]$, we see that $\psi(f_1)=g+t^2h=g+(t+1)h$ is a proper divisor of $f$, too. Now, since $h\neq 0$ and the polynomials $g$ and $h$ have no common factor, $f$ and $\psi(f_1)$ have no common factor. Thus 
$f_1\psi(f_1)=g^2+hg+h^2$ is a divisor of $f$. But $f$ is irreducible in $\ftwo[x,y]$. So, we must have $f=g^2+hg+h^2$ and $g\neq 0$ which finishes the proof.
\end{proof}
\begin{lemma}\label{No-point}
Let $f,g,h\in \ftwo[x,y]$ such that $f=g^2+gh+h^2$. Furthermore, suppose that there is no finite double point on $f(x,y)=0$ over the odd-degree extensions of $\ftwo$, i.e., there is no odd number $m$ and $(a,b)\in {\ftwom}\times {\ftwom}$ such that $f(a,b)=0,  \diffp{f}{x}(a,b)$ and $\diffp{f}{y}(a,b)=0$. Then $f(x,y)=0$ has no solution over the odd-degree extensions of $\ftwo$. 
\end{lemma}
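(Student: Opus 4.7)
The plan is to assume for contradiction that $(a,b)\in\ftwom\times\ftwom$ is a solution of $f(x,y)=0$ for some odd $m$, and deduce that $(a,b)$ must be a double point of $f$, contradicting the hypothesis.

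First I would exploit the algebraic shape $f=g^2+gh+h^2$. Writing $u=g(a,b)$ and $v=h(a,b)$, the assumption $f(a,b)=0$ becomes $u^2+uv+v^2=0$ in $\ftwom$. If $v\neq 0$ then $w=u/v$ is a root of $t^2+t+1$, but this polynomial is irreducible over $\ftwo$ and its roots generate $\FF_4$, which is not contained in $\ftwom$ for odd $m$ (since $\FF_4\subseteq\ftwom$ would force $2\mid m$). Hence $v=0$, and then $u^2=0$ forces $u=0$. Thus
\[
g(a,b)=h(a,b)=0.
\]

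Next I would verify vanishing of the partial derivatives at $(a,b)$. In characteristic $2$, the product rule and $2=0$ give
\[
\diffp{f}{x}=\diffp{g}{x}\,h+g\,\diffp{h}{x},\qquad \diffp{f}{y}=\diffp{g}{y}\,h+g\,\diffp{h}{y},
\]
since the $g^2$ and $h^2$ terms contribute $2gg_x=0$ and $2hh_x=0$ (and similarly in $y$). Substituting $g(a,b)=h(a,b)=0$ immediately kills every term, so
\[
\diffp{f}{x}(a,b)=\diffp{f}{y}(a,b)=0,
\]
which together with $f(a,b)=0$ says that $(a,b)$ is a finite double point of $f(x,y)=0$ lying over $\ftwom$. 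This contradicts the hypothesis and completes the argument.

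I do not expect any step to be a genuine obstacle here; the whole lemma rests on the two ``characteristic two'' accidents that $t^2+t+1$ has no root in $\ftwom$ for odd $m$ and that the cross terms in the derivative of $g^2+gh+h^2$ reduce to $gh_\bullet+g_\bullet h$. The only minor thing to be pedantic about is making sure the notion of ``finite double point'' used in the lemma's hypothesis is the standard one, namely simultaneous vanishing of $f$, $\partial f/\partial x$, and $\partial f/\partial y$ at an affine point, which is exactly what we produce.
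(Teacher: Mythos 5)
Your proposal is correct and follows essentially the same route as the paper's proof: both reduce $f(a,b)=0$ to $g(a,b)^2+g(a,b)h(a,b)+h(a,b)^2=0$, rule out the nonzero case via the irreducibility of $t^2+t+1$ over odd-degree extensions of $\ftwo$, and use the characteristic-two derivative identity $\diffp{f}{x}=g\,\diffp{h}{x}+h\,\diffp{g}{x}$ (and its $y$-analogue) to conclude that $g(a,b)=h(a,b)=0$ would force a finite double point, contradicting the hypothesis. The only difference is cosmetic: you handle the two cases in the opposite order and spell out the derivative computation that the paper leaves implicit.
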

\begin{proof}
Suppose for some odd number $m$, there are $a,b\in\ftwom$ such that $f(a,b)=0$. Then 
\begin{equation}\label{gh-equation}
f=g(a,b)^2+g(a,b)h(a,b)+h(a,b)^2=0.
\end{equation}
From this, we notice that if $h(a,b)=0$, then $g(a,b)=0$ which in turn implies that $ \diffp{f}{x}(a,b)$ and $\diffp{f}{y}(a,b)=0$. But this is a contradiction. So, we must have $h(a,b)\neq 0$ and $g(a,b)\neq 0$. Now, if we let $t=\frac{g(a,b)}{h(a,b)}$ which is in $\ftwom$, then from ~\eqref{gh-equation} we deduce that $$g^2+gh+h^2=h(a,b)^2(t^2+t+1)=0.$$ But this is again a contradiction since $h(a,b)\neq 0$ and $t^2+t+1\neq 0$ whenever $t$ is in the odd-degree extensions of $\ftwo$.  
\end{proof}

The following is an immediate corollary of the above lemmas.
\begin{cor}\label{NoPoint-Cor}
	Let $f\in \ftwo[x,y]$ be an irreducible polynomial over $\ftwo$ which is reducible over $\FF_{4}$. Furthermore, suppose that there is no finite double point on $f(x,y)=0$ over the odd-degree extensions of $\ftwo$. Then $f(x,y)=0$ has no solution over the odd-degree extensions of $\ftwo$. 
\end{cor}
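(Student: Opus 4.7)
The statement is advertised as an immediate corollary of Lemma~\ref{QuadraticForm} and Lemma~\ref{No-point}, so the plan is essentially a one-line composition of those two results.

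First I would invoke Lemma~\ref{QuadraticForm}: since $f\in\ftwo[x,y]$ is irreducible over $\ftwo$ but reducible over $\FF_4$, there exist nonzero polynomials $g,h\in\ftwo[x,y]$ with
\[
f=g^2+gh+h^2.
\]
This places $f$ in exactly the algebraic shape that Lemma~\ref{No-point} is designed to handle.

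Next, I would simply feed this decomposition into Lemma~\ref{No-point}. The hypothesis of the corollary is that $f(x,y)=0$ has no finite double point over any odd-degree extension of $\ftwo$, which is precisely the hypothesis required by Lemma~\ref{No-point} (the statement uses the same vanishing condition on $f$, $\partial f/\partial x$, and $\partial f/\partial y$ at a point $(a,b)\in\ftwom\times\ftwom$ with $m$ odd). Applying Lemma~\ref{No-point} then yields that $f(x,y)=0$ has no solution over any odd-degree extension of $\ftwo$, which is the desired conclusion.

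There is no real obstacle here: the work has all been done in the two preceding lemmas. The only thing to verify is that the notion of ``finite double point'' used in the corollary matches exactly the simultaneous vanishing condition $f(a,b)=\partial f/\partial x(a,b)=\partial f/\partial y(a,b)=0$ used in Lemma~\ref{No-point}, which it does by definition. Thus the proof is nothing more than the chain: irreducibility over $\ftwo$ plus reducibility over $\FF_4$ gives the $g^2+gh+h^2$ form via Lemma~\ref{QuadraticForm}, and then absence of finite double points over odd-degree extensions gives emptiness of the zero set via Lemma~\ref{No-point}.
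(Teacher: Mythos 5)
Your proposal is correct and is exactly the argument the paper intends: the corollary is stated as an immediate consequence of Lemma~\ref{QuadraticForm} (giving the decomposition $f=g^2+gh+h^2$) followed by Lemma~\ref{No-point} (giving the emptiness of the zero set over odd-degree extensions). Nothing further is needed.
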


\subsection{Difference sets}
In this section we gather and prove some results concerning the construction of difference sets which be used in later sections. 

\subsubsection{Complementary difference sets}
The following result shows how one can get a difference set different from the given one for free simply by taking its complement.

\begin{theorem}\label{Complementary}
	Let $G$ be a cyclic group of order $v$, and let $\mathcal{D}$ be a cyclic difference set with parameter $(v,k,\lambda)$. Then 
	\begin{itemize}
	
\item[(i)] 	$G\setminus\mathcal{D}$ is a cyclic difference set with parameters  $(v,v-k,v-2k+\lambda)$,

\item[(ii)] if  $k$ is an integer coprime  with $v$, then $\mathcal{D}^k$ is a difference set with parameters
$(v,k,\lambda)$, and  $G\setminus\mathcal{D}^k$ is a cyclic difference set with parameters  $(v,v-k,v-2k+\lambda)$.
	
\end{itemize}
\end{theorem}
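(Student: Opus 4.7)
My plan is to handle both parts by the standard counting argument used to verify the difference-set property, together with the observation that the $k$-th power map on a cyclic group of order coprime to $k$ is a group automorphism.

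For part (i), I would fix an arbitrary non-identity element $g\in G$ and count the number of ordered pairs $(x,y)\in (G\setminus\mathcal{D})\times(G\setminus\mathcal{D})$ satisfying $g=xy^{-1}$. Since the total number of pairs $(x,y)\in G\times G$ with $g=xy^{-1}$ is exactly $v$ (the choice of $y$ determines $x=gy$), I can use inclusion--exclusion on the ``bad'' events $x\in\mathcal{D}$ and $y\in\mathcal{D}$. The number of pairs with $x\in\mathcal{D}$ is $k$ (one pair per element of $\mathcal{D}$, since $y=g^{-1}x$ is forced), and likewise for $y\in\mathcal{D}$; the number of pairs with both $x,y\in\mathcal{D}$ is exactly $\lambda$ by the hypothesis on $\mathcal{D}$. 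Hence the count is $v-2k+\lambda$, independent of the choice of $g$, which is what (i) asserts.

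For part (ii), the hypothesis $\gcd(k,v)=1$ ensures that the map $\phi_k\colon G\to G$ given by $\phi_k(h)=h^k$ is a group automorphism of the cyclic group $G$; in particular, it is a bijection, so $|\mathcal{D}^k|=|\mathcal{D}|=k$. For a non-identity $g\in G$, the pairs $(x',y')\in\mathcal{D}^k\times\mathcal{D}^k$ with $x'(y')^{-1}=g$ correspond, via $\phi_k^{-1}$, bijectively to pairs $(x,y)\in\mathcal{D}\times\mathcal{D}$ with $xy^{-1}=\phi_k^{-1}(g)$; since $\phi_k^{-1}(g)\neq e$, there are exactly $\lambda$ such pairs. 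Thus $\mathcal{D}^k$ is itself a $(v,k,\lambda)$-difference set, and applying (i) to $\mathcal{D}^k$ then yields that $G\setminus\mathcal{D}^k$ is a $(v,v-k,v-2k+\lambda)$-difference set, proving (ii).

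There is no genuine obstacle here; both statements are folklore and the verification is purely combinatorial. The only small point requiring care is noting that coprimality of $k$ with $v$ is precisely the condition that guarantees $\phi_k$ is a bijection of $G$ (and hence that $\mathcal{D}^k$ has the same cardinality as $\mathcal{D}$), which is why this hypothesis is needed in (ii) but not in (i).
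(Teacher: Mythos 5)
Your argument is correct: the inclusion--exclusion count for the complement in part (i) and the observation that $x\mapsto x^k$ is an automorphism of the cyclic (hence abelian) group $G$ when $\gcd(k,v)=1$ in part (ii) are exactly the standard verifications, and both are carried out without gaps. The paper itself states Theorem~\ref{Complementary} without proof, treating it as a well-known fact about complementary difference sets and power maps, so your write-up simply supplies the routine details the authors omitted.
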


Notice that if $\mathcal{D}$ is a difference set with Singer parameters $(2^m-1,2^{m-1}-1,2^{m-2}-1)$, then 
$G\setminus\mathcal{D}$ is a difference set with Singer parameters $(2^m-1,2^{m-1},2^{m-2})$.

The following is an immediate corollary of the above theorem and the fact that the set of elements of trace one is a difference set with Singer parameters $(2^m-1,2^{m-1},2^{m-2})$.

\begin{cor} \label{Trace-DiffSet}
	Let as before $T_n$ denote the elements of $\ftwom$ whose trace is one. Furthermore, let $\gcd(n, 2^m-1)=1$. Then $T_n$ is a difference set in $\ftwomstar$ with Singer parameters  $(2^m-1,2^{m-1},2^{m-2})$.
\end{cor}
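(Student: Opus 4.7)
The plan is to reduce this directly to Theorem~\ref{Complementary}(ii). I will take as known the classical fact cited in the remark just before the statement: the set $T := T_1 = \{x \in \ftwomstar : \Tr(x) = 1\}$ is a difference set in $(\ftwomstar, \times)$ with Singer parameters $(2^m-1, 2^{m-1}, 2^{m-2})$. Once this is granted, the only real work is to identify $T_n$ as a power-set of $T$.

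First I would observe that since $\gcd(n, 2^m - 1) = 1$, the map $x \mapsto x^n$ is a bijection on $\ftwomstar$, so it admits an inverse $x \mapsto x^k$ on $\ftwomstar$, where $k$ is the multiplicative inverse of $n$ modulo $2^m - 1$. Next I would note that, since $\Tr(0) = 0$, no element of $T_n$ is zero, so $T_n \subseteq \ftwomstar$. The key identity is then the equivalence
\[
x \in T_n \iff \Tr(x^n) = 1 \iff x^n \in T \iff x \in T^k,
\]
which gives $T_n = T^k$.

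Finally, since $\gcd(n, 2^m-1) = 1$ forces $\gcd(k, 2^m-1) = 1$, I would apply Theorem~\ref{Complementary}(ii) to $T$ with exponent $k$ to conclude that $T^k = T_n$ is a difference set in $\ftwomstar$ with the same parameters $(2^m-1, 2^{m-1}, 2^{m-2})$ as $T$.

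I do not foresee any real obstacle here: the corollary is essentially bookkeeping, the only subtle point being the careful use of the bijectivity of $x \mapsto x^n$ on $\ftwomstar$ in order to rewrite $T_n$ as $T^k$ before invoking Theorem~\ref{Complementary}(ii). The cited unproved fact that $T$ itself is a Singer difference set is standard (it follows, for instance, from the fact that trace-one elements form a Singer difference set via the classical identification of $\ftwomstar$ with the cyclic group of order $2^m-1$), and I would simply quote it.
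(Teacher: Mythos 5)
Your proposal is correct and follows essentially the same route the paper intends: the paper treats this as an immediate consequence of Theorem~\ref{Complementary}(ii) together with the classical fact that the trace-one elements form a Singer difference set, and your identification $T_n=T_1^{k}$ (with $k$ the inverse of $n$ modulo $2^m-1$) is exactly the bookkeeping step that makes that deduction precise.
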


\subsubsection{Difference sets from sextic binomials over binary fields}
It is well-known that the polynomial $f(x)=x^6+x$ induces a two to one map over the odd-degree extensions of $\ftwo$, and $D(f)^*$ is a difference set with Singer parameter $(2^m-1,2^{m-1}-1,2^{m-2}-1)$ in $\ftwomstar$~\cite{Maschietti, Dillon-Dobbertin}. Another sextic polynomial with the same properties is $g(x)=x^6+x^5$. This fact can be deduced from the results of Glynn~\cite[Result 8]{Glynn} and Maschietti~\cite{Maschietti} (see also~\cite[Page 352]{Dillon-Dobbertin}).  Although this fact is probably well-known to the experts, it is not written explicitly anywhere in the literature. Since we will need these facts in the later sections, we record them as a lemma, and give the proof of some of the claims about $g(x)$ as some details of the proof  will be required later.

\begin{lemma}\label{sextic-equation}
	Let $m$ be an odd number, $f(x)=x^6+x$, $g(x)=x^6+x^5$, and $a,b,c\in\ftwom$.  Then 
	\begin{itemize}
	
	\item[(i)] $f$ and $g$ induce two to one maps over $\ftwom$,
	
	\item[(ii)] if  $a,b\in\ftwom$ and $g(a)=g(b)$, then $\Tr(a+b)=1$,
	
	\item[(iii)]  the sets $D(f)^*$ and $ D(g)^*$ both are difference sets with singer parameters $(2^m-1,2^{m-1}-1,2^{m-2}-1)$ in $\ftwomstar$.
	
	\end{itemize}
\end{lemma}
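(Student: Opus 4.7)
The plan is to split the lemma into three pieces: handle $f(x) = x^6 + x$ and $g(x) = x^6 + x^5$ separately for the global claims (i) and (iii), and extract the trace identity (ii) by a direct symmetric-function computation with $g$.

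For $f(x) = x^6 + x$ I will first observe that for odd $m$ both $\gcd(2, 2^m - 1) = 1$ and $\gcd(3, 2^m - 1) = 1$ (the latter because $2^m \equiv 2 \pmod 3$), so $x \mapsto x^6$ is a permutation of $\ftwom$. Combined with Segre's classical theorem that $\{(1, t, t^6) : t \in \ftwom\} \cup \{(0,1,0), (0,0,1)\}$ is a hyperoval in $PG(2, 2^m)$, Maschietti's theorem immediately supplies both the 2-to-1 property of $f$ and the assertion that $D(f)^*$ is a Singer difference set with the stated parameters. For $g$, I plan to invoke Glynn's Result~8 to exhibit the set attached to $g$ as projectively equivalent, inside $PG(2, 2^m)$, to the Segre hyperoval used for $f$; since the Maschietti-type construction is invariant under such equivalence, this transports the 2-to-1 property and the Singer difference-set property from $f$ to $g$. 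The main obstacle I expect is unpacking Glynn's result into an explicit projectivity relating the two hyperovals and checking that under this identification the value-set $D(g)^*$ corresponds to the Maschietti image of the transformed Segre hyperoval.

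For (ii) I will proceed by a direct algebraic computation. Setting $s = a + b$ and $t = ab$ with $a \neq b$, and using the characteristic-two identities for the power sums $a^i + b^i$ in terms of $s$ and $t$, one finds
\[ g(a) + g(b) = s\bigl[(s+1) t^2 + s^2 t + s^4(s+1)\bigr], \]
so the equation $g(a) = g(b)$ becomes $(s+1) t^2 + s^2 t + s^4(s+1) = 0$. I then split on $s$. If $s = 1$ the equation forces $t = 0$, so $\{a, b\} = \{0, 1\}$ and $\Tr(a+b) = \Tr(1) = 1$ by oddness of $m$. If $s \notin \{0, 1\}$, the substitution $t = \tfrac{s^2}{s+1}\, u$ turns the quadratic in $t$ into the Artin-Schreier equation $u^2 + u = (s+1)^2$, which has a solution $u \in \ftwom$ precisely when $\Tr((s+1)^2) = \Tr(s+1) = 0$, equivalently $\Tr(s) = 1$. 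In either case $\Tr(a+b) = 1$, which is exactly (ii).
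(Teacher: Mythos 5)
The gap is in part (i) for $g(x)=x^6+x^5$: you never actually prove it. You only announce a plan to ``transport'' the two-to-one property from the Segre hyperoval via Glynn's Result 8, and you yourself flag the unpacking of that projective equivalence as the main unresolved obstacle. Moreover, the transport principle is not valid as stated. Projective equivalence of hyperovals (or, what Glynn plus Maschietti would give you, the fact that $D(g)^*$ is a difference set of size $2^{m-1}-1$, so that the image of $g$ has exactly $2^{m-1}$ elements) only controls the image of $g$, not its fibres: a degree-six map with image of size $2^{m-1}$ could a priori have fibres of sizes $1,3,4,\dots$ averaging to two. The two-to-one claim is a statement about the specific parametrization $x\mapsto x^6+x^5$ and needs its own argument. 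The paper supplies one directly: substituting $y=x/z$ in $x^6+x^5=y^6+y^5$ gives $x=1+1/(z^5+z^4+z^3+z^2+z+1)$, and $z^5+z^4+z^3+z^2+z+1=D_5(z+1)$ where $D_5$ is the degree-five Dickson polynomial, a permutation of $\ftwom$ for odd $m$; hence every $x\neq 1$ has exactly one nontrivial partner $y$, and $x=1$ pairs only with $0$. (This explicit $z$-parametrization of colliding pairs is reused later, in the proof of Lemma~\ref{Six-Five-Polys}, so some version of it is hard to avoid.) Your handling of $f(x)=x^6+x$ and of part (iii) by citing Segre--Maschietti and Glynn matches the paper, which likewise only cites these sources for (iii).

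Part (ii), by contrast, is correct and genuinely different from the paper. The identity $g(a)+g(b)=s\bigl[(s+1)t^2+s^2t+s^4(s+1)\bigr]$ with $s=a+b$, $t=ab$ checks out; the case $s=1$ forces $\{a,b\}=\{0,1\}$, and for $s\neq 0,1$ the substitution $t=\frac{s^2}{s+1}u$ turns the relation into $u^2+u=(s+1)^2$, whence $\Tr(s+1)=0$ and $\Tr(a+b)=\Tr(s)=1$ because $\Tr(1)=1$ for odd $m$ (the solvability criterion is the paper's Lemma~\ref{Quad-zeros}). The paper instead deduces (ii) from the explicit parametrization $\{a,b\}=\{\frac{1+z^5}{1+z^6},\,z\frac{1+z^5}{1+z^6}\}$ obtained in its proof of (i), followed by a trace computation; your symmetric-function route is more self-contained and elementary, but because it bypasses that parametrization it cannot substitute for the missing proof of (i).
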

\begin{proof}
	For the proof of the claims about $f(x)$ see~\cite{Maschietti}, and for the proof of claim (iii) about $g(x)$ see the results of Glynn~\cite[Result 8]{Glynn} and Maschietti~\cite{Maschietti}. To prove (i) for $g(x)$, suppose $y\neq x$ and  
	\begin{equation}\label{xy-equation}
	x^6+x^5=y^6+y^5.
	\end{equation}

	If we let $y=\frac{x}{z}$  in this  equation and solve for $x$, we get
	\[
	x=\frac{z^6+z}{z^6+1}=1+\frac{1}{z^5+z^4+z^3+z^2+z+1}.
	\]
	But $z^5+z^4+z^3+z^2+z+1=D_5(z+1)$  where $D_5(z)=z^5+z^3+z$ is  the Dickson polynomial  of degree five which is a permutation polynomial over odd degree extensions of $\ftwo$ (see~\cite[Page 353]{Dillon-Dobbertin}). Thus  $z^5+z^4+z^3+z^2+z+1$ is a permutation polynomial, and hence for given $x\in\ftwom - \{1\} $, there is a unique $z\in \ftwom - \{0\}$ satisfying the above equation and hence there is a unique $y$ satisfying~\eqref{xy-equation}. Furthermore,  obviously, if $x=1$, $y\neq x$,  and $g(y)=g(1)$, then $y=0$. So, $g(x)$ induces a two to one map over the odd-degree extensions of $\ftwo$.
	
	To prove (ii), from the proof of (i) we first notice that if $\{a,b\}\neq \{0,1\}$, then for some $z\neq 1$ we have 
	\[
	\{a,b\}=\left\{\frac{1+z^5}{1+z^6},z\frac{1+z^5}{1+z^6}\right\}.
	\]
	Using this fact and the fact that $\Tr(1)=1$ over the odd-degree extensions of $\ftwo$,  we have
	\begin{eqnarray*}
	\Tr(a+b)&=&\Tr(1+\frac{z^5+z}{z^6+1})=1+\Tr(\frac{z(z+1)^4}{(z+1)^2(z^4+z^2+1)})=1+\Tr(\frac{z^3+z}{z^4+z^2+1}).\\
	\end{eqnarray*}
	But
	\[
	\frac{z^3+z}{z^4+z^2+1}=\frac{z}{z^2+z+1}+\frac{z^2}{z^4+z^2+1}=\frac{z}{z^2+z+1}+(\frac{z}{z^2+z+1})^2.
	\]
	As $\Tr(u^2+u)=0$ for every $u\in\ftwom$, from the above equation we conclude that 
	\[
	\Tr(\frac{z^3+z}{z^4+z^2+1})=0,
	\]
	and hence $\Tr(a+b)=1$. Now, as the claim is trivial when $\{a,b\}\neq \{0,1\}$, we are done with the proof of (ii).

\end{proof}
\subsubsection{Dillon-Dobbertin difference sets}
We will need the following result originally conjectured by Dobbertin~\cite{Dobbertin}. 
\begin{theorem}~\cite{Dillon-Dobbertin}\label{Dillon-Dobbertin}
	Let $m$ be an odd number, and $k$ be an integer. Furthermore, let  $1\le k\le \frac{m}{2}$, $(k,m)=1$, $d=2^{2k}-2^k+1 $, and $f(x)=(x+1)^d+x^d+1\in \ftwom[x]$.  Then $D(f)^*$ is a difference set in $\ftwomstar$ with singer parameters $(2^m-1, 2^{m-1}-1, 2^{m-2}-1)$.
\end{theorem}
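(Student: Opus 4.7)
The plan is to reduce the difference-set property to a character-sum estimate. By the standard character criterion a subset $\mathcal{D}\subseteq(\ftwomstar,\times)$ of size $k$ is a $(v,k,\lambda)$-difference set exactly when $\bigl|\sum_{d\in\mathcal{D}}\chi(d)\bigr|^{2}=k-\lambda$ for every non-trivial multiplicative character $\chi$ of $\ftwomstar$; for the target Singer parameters this says I need $|D(f)^{*}|=2^{m-1}-1$ and $|\chi(D(f)^{*})|^{2}=2^{m-2}$ uniformly in $\chi\neq 1$.

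To obtain the cardinality I would first show that $f$ is two-to-one on $\ftwom$, with $f(0)=f(1)=0$ the unique collision in the fibre over $0$. Substituting $y=x+u$ in $f(x)=f(y)$ and dividing out the obvious factor $u$ produces a bivariate locus in $(x,u)$ whose $\ftwom$-rational points must be ruled out for $u\notin\{0,1\}$. The Frobenius-orbit structure of the Kasami exponent $d=2^{2k}-2^{k}+1$ forces the defining polynomial to be irreducible over $\ftwo$ but reducible over $\FF_{4}$, so Lemma~\ref{QuadraticForm} applies; Corollary~\ref{NoPoint-Cor} then finishes once one verifies the absence of finite double points over odd-degree extensions, a partial-derivative check in which the coprimality $(k,m)=1$ ensures the discriminant-like condition does not vanish. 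Counting then yields $|D(f)^{*}|=(2^{m}-2)/2=2^{m-1}-1$.

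For the character sums, the two-to-one property turns $\sum_{d\in D(f)^{*}}\chi(d)$ into $\tfrac{1}{2}\sum_{x\in\ftwom}\chi(f(x))$, and passing through Gauss sums converts the target estimate into the Weil-type additive exponential sum $S(\beta)=\sum_{x\in\ftwom}(-1)^{\Tr(\beta f(x))}$ for $\beta\in\ftwomstar$. Because $d=2^{2k}-2^{k}+1$ has binary expansion supported on the Frobenius orbit $\{1,2^{k},2^{2k}\}$, expanding $(x+1)^{d}$ and pulling the Frobenius through the trace collapses $\Tr(\beta f(x))$ to an $\ftwo$-quadratic form on $x$ whose coefficients depend linearly on $\beta$. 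The magnitude of $S(\beta)$ is then $2^{(m+r(\beta))/2}$ where $m-r(\beta)$ is the rank of the form, and the character criterion demands that $r(\beta)$ assume the unique value producing $|\chi(D(f)^{*})|^{2}=2^{m-2}$.

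The main obstacle, unsurprisingly, is this uniform rank computation for the Kasami quadratic form. It is here that the hypotheses $(k,m)=1$ and $m$ odd genuinely intervene: together they force the radical of the form to have constant dimension as $\beta$ varies over $\ftwomstar$, pinning down $|S(\beta)|$ to the single value required by the character criterion. This rank analysis is the technical heart of the original Dillon--Dobbertin argument and is considerably harder than the curve-theoretic cardinality step, which is a routine application of the tools assembled in Section~\ref{Curves}.
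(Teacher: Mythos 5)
First, note that the paper does not prove this statement at all: it is quoted as Theorem~\ref{Dillon-Dobbertin} from Dillon--Dobbertin (and was originally Dobbertin's conjecture), so your attempt has to stand on its own, and it does not. The decisive error is in the character-sum step. You claim that $d=2^{2k}-2^k+1$ ``has binary expansion supported on the Frobenius orbit $\{1,2^k,2^{2k}\}$'' and that consequently $\Tr(\beta f(x))$ collapses to an $\FF_2$-quadratic form in $x$. That is false: the exponent with expansion $\{1,2^k,2^{2k}\}$ is $2^{2k}+2^k+1$, whereas the Kasami exponent satisfies $2^{2k}-2^k+1=1+2^k+2^{k+1}+\cdots+2^{2k-1}$ and has binary weight $k+1$. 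Hence for $k\ge 2$ the function $x\mapsto\Tr\bigl(\beta((x+1)^d+x^d+1)\bigr)$ has algebraic degree $k+1$, it is not a quadratic form, and the rank/radical analysis you propose (where the hypotheses $m$ odd and $\gcd(k,m)=1$ were supposed to pin down the rank) simply does not apply. Determining the relevant exponential sums for the Kasami--Welch exponents is exactly the hard content of Kasami's and Dillon--Dobbertin's work, and their argument goes through entirely different machinery (Dickson polynomials, the M\"uller--Cohen--Matthews polynomials, and delicate identities for the map $x\mapsto(x+1)^d+x^d$), not a quadratic-form rank computation. Only the case $k=1$, where $d=3$ and $f(x)=x^2+x$, fits your scheme.

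Two further steps are also unsupported. The cardinality step asserts that the collision locus of $f(x)=f(x+u)$, $u\notin\{0,1\}$, is irreducible over $\ftwo$, reducible over $\FF_4$, and free of finite double points over odd-degree extensions, so that Lemma~\ref{QuadraticForm} and Corollary~\ref{NoPoint-Cor} apply; for general $k$ this locus has unbounded degree and none of these properties is verified (the paper only ever applies that machinery to a few explicit low-degree curves, checked partly by computer). And the passage from the multiplicative-character criterion for a difference set in $(\ftwomstar,\times)$ to the additive sums $S(\beta)=\sum_{x}(-1)^{\Tr(\beta f(x))}$ is not a formality one gets ``through Gauss sums''; for Singer-parameter sets there is an equivalence with the additive Fourier spectrum of the indicator function, but it must be stated and proved (or cited) precisely, including the corrections coming from the fibre over $0$. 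As written, the proposal reduces the theorem to a computation that is not available, so it does not constitute a proof.
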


\subsection{Factorization of trinomials over binary fields}

\subsubsection{Quadratic and Cubic Trinomials}
The following lemma is a direct consequence of the fact that $\Tr(x^2)=\Tr(x)$ for every $x\in\ftwom$.
\begin{lemma}\label{Quad-zeros}
The quadratic polynomial $f(x)=x^2+x+a\in \ftwom[x]$ has a solution in $\ftwom$ if and only if $\Tr(a)=0$. 
\end{lemma}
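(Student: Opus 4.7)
The plan is to prove both directions by exploiting the $\ftwo$-linearity of both sides. The forward implication is essentially a one-line computation, while the converse is a dimension count comparing the image of the Artin--Schreier map $x \mapsto x^2+x$ with the kernel of the trace.

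For the forward direction, assume $f$ has a root $\alpha \in \ftwom$. Then $a = \alpha^2 + \alpha$, and using the hint given just before the statement (namely $\Tr(x^2)=\Tr(x)$ for all $x\in\ftwom$, which in turn comes from $\Tr$ being Frobenius-invariant), I would compute $\Tr(a) = \Tr(\alpha^2) + \Tr(\alpha) = \Tr(\alpha) + \Tr(\alpha) = 0$, since we are in characteristic two.

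For the converse, I would consider the $\ftwo$-linear map $\phi:\ftwom\to\ftwom$ defined by $\phi(x)=x^2+x$. Its kernel consists of the elements satisfying $x^2=x$, i.e., the prime field $\ftwo$, so $\ker\phi$ has $\ftwo$-dimension $1$ and hence the image of $\phi$ has dimension $m-1$ over $\ftwo$. On the other hand, the trace map $\Tr:\ftwom\to\ftwo$ is $\ftwo$-linear and surjective, so its kernel also has dimension $m-1$. The forward direction just established shows $\mathrm{Im}(\phi)\subseteq\ker(\Tr)$, and a comparison of dimensions forces equality. Thus $\Tr(a)=0$ implies $a\in\mathrm{Im}(\phi)$, i.e., $f$ has a root in $\ftwom$.

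There is really no substantive obstacle here; the only subtle point is justifying the dimension count of $\ker\phi$, which reduces to recalling that the only idempotents of $\ftwom$ under squaring are the elements of $\ftwo$. An alternative, if one preferred to avoid the dimension argument, would be to exhibit an explicit root using a dual basis (or, equivalently, Hilbert 90 for the Artin--Schreier extension), but the surjectivity onto $\ker(\Tr)$ argument above is cleaner and is the standard treatment in this context.
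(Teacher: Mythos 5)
Your proof is correct and is essentially the standard argument the paper has in mind: the paper merely cites the identity $\Tr(x^2)=\Tr(x)$, which gives your forward direction at once, and your dimension count for the Artin--Schreier map $x\mapsto x^2+x$ (kernel $\ftwo$, image of $\ftwo$-dimension $m-1$ inside the trace-zero hyperplane, hence equal to it) is the usual way to complete the converse. Nothing is missing; this matches the paper's (implicit) treatment.
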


The following lemma is a special case of a much more general result of Berlekamp~\cite{Berlekamp}.
\begin{lemma}\label{Cubic-Berlekamp}
	Suppose $f(x)=x^3+px+q\in \ftwom[x]$ where $q\neq 0$. Then $f(x)$ has an odd number of irreducible factors over $\ftwom$ if and only if $\Tr(\frac{p^3}{q^2}+1)=0$. 
\end{lemma}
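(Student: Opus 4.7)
The plan is to introduce a Berlekamp-style invariant $u$ built from the roots of $f$ whose parity under the action of Frobenius distinguishes odd from even numbers of factors, and to reduce the asserted trace criterion to an Artin--Schreier computation.

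First, I would confirm that $f$ has three distinct roots in $\overline{\ftwom}$: since $f'(x)=x^{2}+p$, either $p=0$, in which case $\gcd(f,f')=\gcd(x^{3}+q,x^{2})=1$ because $q\neq 0$, or $p\neq 0$, in which case $f'$ has the unique root $\sqrt{p}$ but $f(\sqrt p)=p\sqrt p+p\sqrt p+q=q\neq 0$. Let $\alpha_1,\alpha_2,\alpha_3\in\overline{\ftwom}$ be the three distinct roots. The irreducible factors of $f$ over $\ftwom$ correspond to the orbits of the Frobenius automorphism $\phi\colon x\mapsto x^{2^{m}}$ on $\{\alpha_1,\alpha_2,\alpha_3\}$, and on a three-element set the number of orbits is odd (namely $1$ or $3$) if and only if $\phi$ acts as an even permutation.

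Next I would set
\[
u=\frac{\alpha_2}{\alpha_1+\alpha_2}+\frac{\alpha_3}{\alpha_1+\alpha_3}+\frac{\alpha_3}{\alpha_2+\alpha_3}\in\overline{\ftwom}.
\]
The characteristic-two identity $\alpha_i/(\alpha_i+\alpha_j)=1+\alpha_j/(\alpha_i+\alpha_j)$ makes it routine to check, for each of the three transpositions of the $\alpha_i$'s, that a transposition shifts $u$ by exactly $1$, while a $3$-cycle fixes $u$. Hence $u\in\ftwom$ if and only if $\phi$ acts as an even permutation. Squaring in characteristic two and combining terms yields
\[
u^{2}+u=\sum_{i<j}\frac{\alpha_i\alpha_j}{(\alpha_i+\alpha_j)^{2}}\;=:\;B,
\]
which is a symmetric function of the roots and therefore lies in $\ftwom$. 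By Artin--Schreier, $u\in\ftwom$ iff $\Tr(B)=0$.

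It then remains to evaluate $B$. Since $\alpha_1+\alpha_2+\alpha_3=0$, we have $\alpha_i+\alpha_j=\alpha_k$ whenever $\{i,j,k\}=\{1,2,3\}$, so
\[
B=\frac{\alpha_1\alpha_2}{\alpha_3^{2}}+\frac{\alpha_1\alpha_3}{\alpha_2^{2}}+\frac{\alpha_2\alpha_3}{\alpha_1^{2}}=q\sum_{i=1}^{3}\frac{1}{\alpha_i^{3}}.
\]
The reciprocals $\beta_i=1/\alpha_i$ are the roots of $qy^{3}+py^{2}+1$, so their elementary symmetric functions are $e_1'=p/q$, $e_2'=0$, $e_3'=1/q$. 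Newton's identities in characteristic two then give $\sum_i\beta_i^{3}=(e_1')^{3}+e_3'=p^{3}/q^{3}+1/q$, whence $B=p^{3}/q^{2}+1$. Combining this with the previous paragraph gives the desired equivalence.

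The only delicate point is the verification that every transposition shifts $u$ by exactly $1$; this is a short bookkeeping check made almost automatic by the identity $\alpha_i/(\alpha_i+\alpha_j)=1+\alpha_j/(\alpha_i+\alpha_j)$, but it must be carried out for each transposition (or, equivalently, for one transposition together with one $3$-cycle) to conclude that $u$ genuinely separates even from odd Frobenius.
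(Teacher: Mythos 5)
Your proof is correct. Note that the paper does not prove this lemma at all: it simply records it as a special case of Berlekamp's general characteristic-two analogue of the discriminant and cites that paper. What you have written is, in effect, a self-contained specialization of exactly Berlekamp's argument to the cubic case, and every step checks out: $f$ is separable because $q\neq 0$; an odd number of factors of a separable cubic corresponds precisely to the Frobenius acting as an even permutation of the roots; your invariant $u$ is fixed by the $3$-cycles and shifted by $1$ by each transposition (the three routine verifications via $t_{ij}+t_{ji}=1$ all work); $u^2+u=\sum_{i<j}\alpha_i\alpha_j/(\alpha_i+\alpha_j)^2$ is symmetric, hence lies in $\ftwom$, so Artin--Schreier (Lemma~\ref{Quad-zeros} of the paper) converts membership of $u$ in $\ftwom$ into the vanishing of a trace; and since $\alpha_1+\alpha_2+\alpha_3=0$ and $\alpha_1\alpha_2\alpha_3=q\neq 0$ (so each $\alpha_i\neq 0$), the symmetric quantity reduces to $q\sum_i\alpha_i^{-3}$, which Newton's identities for the reciprocal polynomial evaluate to $p^3/q^2+1$, as required. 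The only cosmetic improvement would be to state explicitly that $\alpha_i\neq 0$ (from $e_3=q\neq 0$) before inverting the roots, and to actually write out the one-line transposition and $3$-cycle checks, since they are the heart of the parity argument; as a proof strategy, however, nothing is missing.
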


\subsubsection{$f(x)=x^{2^k+1}+ax+b$}
Let $k<m$ be a positive integer, and let 
$$f(x)=x^{2^k+1}+ax+b$$ 
be a polynomial in $\ftwom[x]$. This polynomials appears in many different contexts and applications, including, difference sets, finite geometry, the computation of cross correlation between $m$-sequences, Cohen-Matthews polynomials and index-calculus algorithms for solving discrete logarithm problem (see for example~\cite{Bluher1, Robert, Dillon-Dobbertin}), and hence has been studied extensively in the literature.  As this polynomial will appear in several places in the later sections, we gather and prove some results about it. 

To start with, following~\cite{Dillon-Dobbertin}, we need to  make some notations as follows:

\[A_1(x)=x,\]
\[A_2(x)=x^{2^k+1},\]
\[A_{i+2}(x)=x^{2^{(i+1)k}}A_{i+1}(x)+x^{2^{(i+1)k}-2^{ik}}A_i(x), i \ge 1,\]
\[B_1(x)=0,\]
\[B_2(x)=x^{2^k-1},\]
\[B_{i+2}(x)=x^{2^{(i+1)k}}B_{i+1}(x)+x^{2^{(i+1)k}-2^{ik}}B_i(x), i \ge 1.\]

Also for each positive integer $k^{'}<m$, let:
\begin{equation}\label{DefinitionOfR}
R_{k,k^{'}}(x)=(\sum_{i=1}^{k^{'}}A_i(x))+B_{k^{'}}(x)
\end{equation}
and 
\begin{eqnarray}\label{DefinitionOfQ}
Q_{k,k^{'}}(x)=\frac{\sum_{i=1}^{k^{'}}x^{2^{ik}}}{x^{2^k+1}}, k^{'} odd, \nonumber\\
Q_{k,k^{'}}(x)=\frac{1+\sum_{i=1}^{k^{'}}x^{2^{ik}}}{x^{2^k+1}}, k^{'} even.
\end{eqnarray}
The following theorems taken from~\cite{Helleseth-1} and~\cite{Helleseth-2}, respectively, are essentially the special cases of a result due to Bluher~\cite{Bluher1}. 
\begin{theorem}\label{HKLemma-0}
   Let $k$ be a positive integer such that $gcd(k,m)=1$ and $k^{'}$ be the multiplicative inverse of $k$ modulo $m$. Let $p(x)=x^{2^k+1}+x$ and  $P_i$ be the set of all $a \ne 0$ in $\ftwom$ where $|P_p(a)|=i$.  Then the function $p(x)$ is a $(0,1,3)$-map and we have:
   
 \begin{itemize}
	\item $|P_0|=\frac{2^{m}+1}{3}$ and for all $a \in P_0$ we have $\Tr (R_{k,k^{'}}(a^{-1}))=1$.
	
	\item $|P_1|=2^{m-1}$ and $a \in P_1$ if and only if $\Tr (R_{k,k^{'}}(a^{-1}))=0$.
	
	\item $|P_3|=\frac{2^{m-1}-1}{3}$ and for all $a \in P_3$ we have $\Tr (R_{k,k^{'}}(a^{-1}))=1$.
\end{itemize}
\end{theorem}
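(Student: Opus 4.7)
The plan is to establish the three assertions of the theorem in sequence: (a) the $(0,1,3)$-map property, (b) the cardinalities of $P_0, P_1, P_3$, and (c) the trace characterization. The result is essentially a specialization of Bluher's theorem~\cite{Bluher1} to characteristic~$2$ under $\gcd(k,m) = 1$, reformulated as in~\cite{Helleseth-1}.

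For (a), fix $a \ne 0$ and suppose $x, y \in \ftwom$ are distinct preimages. Setting $z = x + y \ne 0$, dividing the identity $p(x+z) = p(x)$ by $z^{2^k+1}$ and substituting $w = x/z$ yields the linearized relation
\[
w^{2^k} + w = 1 + z^{-2^k}.
\]
Since $\gcd(k,m) = 1$, the $\ftwo$-linear map $w \mapsto w^{2^k} + w$ on $\ftwom$ has kernel $\ftwo$ and image the trace-zero hyperplane. Thus, for each fixed $z$, the equation has either $0$ solutions or exactly $2$ solutions $\{w, w+1\}$, corresponding to the preimage pair $\{x, x+z\}$. In particular, distinct preimage pairs must have distinct differences $z$. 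By Bluher's theorem, the number of preimages of $a$ lies in $\{0, 1, 2, 2^k+1\}$; the $2$-preimage case is ruled out by the symmetric identity $a = xy(x+y)^{2^k-1}$ (obtained by eliminating $x^{2^k}$ between the two root equations) together with a factoring argument that forces a third preimage, and the $2^k+1$-preimage case for $k \ge 2$ is ruled out because a complete factorization over $\ftwom$ would require the roots to form a coset of a subfield containing $\mathbb{F}_{2^k}$, incompatible with $\ftwom \cap \mathbb{F}_{2^k} = \ftwo$ under $\gcd(k,m) = 1$.

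For (b), the factorization $p(x) = x(x+1)^{2^k}$ shows $p^{-1}(0) = \{0, 1\}$, so $|P_1| + 3|P_3| = 2^m - 2$ and $|P_0| + |P_1| + |P_3| = 2^m - 1$. A third relation comes from counting ordered pairs $(x, y)$ with $x \ne y$ and $p(x) = p(y)$: directly this equals $2 + 6|P_3|$, while via the substitution in~(a) it equals $2 \cdot \#\{z \in \ftwom^* : \Tr(1 + z^{-2^k}) = 0\}$. Since $m$ is odd, $\Tr(1) = 1$, so the condition is $\Tr(z^{-2^k}) = 1$; as $z \mapsto z^{-2^k}$ permutes $\ftwom^*$, this count equals $2 \cdot 2^{m-1} = 2^m$. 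Solving the linear system yields $|P_3| = (2^{m-1}-1)/3$, $|P_1| = 2^{m-1} - 1$, and $|P_0| = (2^m+1)/3$.

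For (c), I would iterate the root identity $\alpha^{2^k} = 1 + a\alpha^{-1}$ (valid whenever $p(\alpha) = a$) to express each Frobenius conjugate $\alpha^{2^{jk}}$ as a rational function in $\alpha$ and $u := a^{-1}$ assembled from $A_j(u)$ and $B_j(u)$; the recurrences for $A_{i+2}, B_{i+2}$ in the preamble to~\eqref{DefinitionOfR} are engineered precisely to carry out one iteration step. Setting $j = k'$ (where $kk' \equiv 1 \pmod m$) returns $\alpha^2$, and taking the absolute trace of the telescoping sum $\sum_{j=0}^{k'-1} \alpha^{2^{jk}}$ collapses it into $\Tr(R_{k,k'}(a^{-1}))$ up to a correction term that vanishes precisely when $\alpha \in \ftwom$ is the unique preimage. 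Matching with the cardinalities from~(b) then forces $\Tr(R_{k,k'}(a^{-1})) = 1$ on $P_0 \cup P_3$, completing the characterization. The main obstacle lies here: verifying that the rational-function iteration collapses exactly to $R_{k,k'}(a^{-1})$ requires a careful induction through the $A_j, B_j$ recurrences, with separate treatment of the two parity cases of $k'$ to account for the ``$+1$'' in the definition~\eqref{DefinitionOfQ} of $Q_{k,k'}$.
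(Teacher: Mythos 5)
The paper itself does not prove Theorem~\ref{HKLemma-0}: it is imported from~\cite{Helleseth-1} (and is essentially a special case of~\cite{Bluher1}), so your proposal has to be judged as a reconstruction of that proof, and as written it has genuine gaps. In part (a) you quote Bluher as allowing $\{0,1,2,2^k+1\}$ rational roots; the correct statement is $\{0,1,2,2^{\gcd(k,m)}+1\}=\{0,1,2,3\}$, so your ``coset of a subfield'' argument against $2^k+1$ roots addresses a case that does not arise (and is itself unjustified), while the one case that genuinely must be excluded, exactly two preimages, is dismissed with ``a factoring argument that forces a third preimage'' that is never supplied. This is the heart of the trichotomy and cannot be left as a gesture; note that for $\gcd(k,m)=d>1$ the two-root case really does occur, so any exclusion must use $d=1$ quantitatively. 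A short complete route, available inside the paper, is the substitution $x=ay$, which turns $x^{2^k+1}+x+a=0$ into $y^{2^k+1}+cy+c=0$ with $c=a^{-2^k}$, i.e.\ exactly the family of Theorem~\ref{root-counting}; with $d=1$ that theorem gives $|Z_2|=0$ and hence both the $(0,1,3)$ property and the cardinalities at once. Part (c) --- the trace criterion, which is the real content of~\cite{Helleseth-1} --- remains an outline: the induction expressing $\alpha^{2^{jk}}$ through the $A_j,B_j$ recurrences, the identification of the ``correction term,'' and above all the case $a\in P_0$, where there is no root $\alpha$ to iterate on and one must instead match cardinalities (which needs the permutation property of $R_{k,k'}$, so that exactly $2^{m-1}-1$ nonzero $a$ satisfy $\Tr(R_{k,k'}(a^{-1}))=0$, a fact you never invoke), are all left undone, as you yourself acknowledge.

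Your part (b), by contrast, is a correct and pleasantly elementary double count (conditional on (a)): pairing distinct preimages by their difference $z$ and counting the $z$ with $\Tr(1+z^{-2^k})=0$ does give $2+6|P_3|=2^m$. But observe what it yields: $|P_1|=2^{m-1}-1$, not the $2^{m-1}$ asserted in the statement you set out to prove, and you pass over this in silence. Your value is in fact the consistent one: the three cardinalities displayed in Theorem~\ref{HKLemma-0} sum to $2^m$, one more than the number of admissible $a\neq 0$, and a direct check for $m=3$, $k=1$ (where $p(x)=x^3+x$ on $\FF_8$) gives $|P_1|=3=2^{m-1}-1$. So the count $|P_1|=2^{m-1}$ in the quoted statement is off by one (only the characterization $a\in P_1$ if and only if $\Tr(R_{k,k'}(a^{-1}))=0$ is what the paper actually uses later); a proof attempt that derives a number contradicting the claim must flag and correct the discrepancy rather than leave the conflict unremarked.
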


\begin{theorem}~\cite[Theorem 1]{Helleseth-2}\label{root-counting}
	Let $m$ be and odd number, $a\in\ftwom$, and $$f_a(x)=x^{2^k+1}+ax+a.$$
	Also, let $d=\gcd(k,m)$. Then $f_a(x)=0$ has $0,1,2$ or $2^d+1$ roots in $\ftwom$.  Furthermore, if $Z_i$ denotes the set of $a\in\ftwom$ for which $f_a(x)=0$ has exactly $i$ roots in $\ftwom$, then:
	\begin{itemize}
		\item[(i)] $|Z_0|=\frac{(2^m+1)2^{d-1}}{2^d+1}$,
		\item[(ii)] $|Z_1|=2^{m-1}$
		\item[(iii)] $|Z_2|=\frac{(2^m-1)(2^{d-1}-1)}{2^d-1}$,
		\item[(iv)] $|Z_{2^d+1}|=\frac{2^{m-d}-1}{2^{2d}-1}.$
	\end{itemize}
	
\end{theorem}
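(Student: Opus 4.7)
The plan is to reduce the analysis of $f_a(x)=x^{q+1}+ax+a$ (with $q=2^k$) to an $\FF_2$-affine equation via the classical ``translate-by-a-root'' device, and then to extract the four cardinalities by a short double-counting argument.

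After disposing of $a=0$ trivially ($f_0(x)=x^{q+1}$ has only $x=0$ as a root) and noting that $x\in\{0,1\}$ is never a root when $a\ne 0$, I take any root $\alpha\in\ftwom$ of $f_a$, so that $\alpha\notin\{0,1\}$ and $a=\alpha^{q+1}/(\alpha+1)$. Substituting $x=\alpha+w$ and simplifying gives
\[
f_a(\alpha+w)\;=\;w\Bigl(w^{q}+\alpha\,w^{q-1}+\tfrac{\alpha^{q}}{\alpha+1}\Bigr),
\]
and the further substitution $w=\alpha/v$ turns the bracketed factor into
\[
v^{q}+(\alpha+1)\,v+(\alpha+1)\;=\;0. \tag{$\star$}
\]
The $\FF_2$-linear operator $L_\alpha(v):=v^{q}+(\alpha+1)v$ has kernel of $\FF_2$-dimension $d$ whenever $\alpha+1\in(\ftwomstar)^{q-1}$, and is injective otherwise (using $\gcd(q-1,2^m-1)=2^d-1$, which holds since $d\mid k$ and $d\mid m$). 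Thus $(\star)$ has $0$, $1$, or $2^d$ solutions in $\ftwom$, and restoring the root $\alpha$ yields $r(a)\in\{0,1,2,2^{d}+1\}$ as claimed.

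For $|Z_2|$, I count directly: every $a\in Z_2$ contributes two roots $\alpha$ with $\alpha+1\notin(\ftwomstar)^{q-1}$ (the injective-kernel case), and conversely every such $\alpha\in\ftwom\setminus\{0,1\}$ is a root of a unique such $a$. Since $(\ftwomstar)^{q-1}$ has size $(2^m-1)/(2^d-1)$,
\[
2|Z_2|\;=\;(2^m-2)-\tfrac{2^m-2^d}{2^d-1}\;=\;\tfrac{(2^d-2)(2^m-1)}{2^d-1},
\]
giving $|Z_2|=(2^{d-1}-1)(2^m-1)/(2^d-1)$. For the total number of unordered pairs of distinct roots of a common $f_a$, I observe that $(\star)$ sets up a bijection between pairs $(\alpha,v)\in(\ftwom\setminus\{0,1\})\times\ftwom$ satisfying $(\star)$ and the values $v\in\ftwom\setminus\{0,1\}$, via $\alpha=(v^{q}+v+1)/(v+1)$. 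The boundary cases $v\in\{0,1\}$ and $\alpha\in\{0,1\}$ are all ruled out; the only delicate exclusion is $\alpha\ne 0$, which reduces to the unsolvability of $v^{q}+v=1$ in $\ftwom$. This follows from $\mathrm{Tr}_{\ftwom/\FF_{2^d}}(1)=m/d\equiv 1\pmod 2$, using that $m/d$ is odd---this is precisely where the hypothesis ``$m$ odd'' enters. Since each such pair $(\alpha,v)$ encodes an ordered pair of distinct roots of a common $f_a$ (the partner root being $\alpha+\alpha/v$), the number of ordered pairs of distinct roots is $2^m-2$, whence
\[
\sum_{a}\binom{r(a)}{2}\;=\;2^{m-1}-1.
\]

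Finally, combining the identity $|Z_2|+\binom{2^d+1}{2}|Z_{2^d+1}|=2^{m-1}-1$ with the value of $|Z_2|$ yields $|Z_{2^d+1}|=(2^{m-d}-1)/(2^{2d}-1)$; then $|Z_1|$ follows from $\sum_{a}r(a)=2^m-1$ and $|Z_0|$ from $\sum_{a}1=2^m$. The main obstacle is discovering and justifying the two-stage substitution $x=\alpha+w$, $w=\alpha/v$ that produces $(\star)$: once that reduction is in hand, the $\FF_2$-linearity of $L_\alpha$ dictates the possible root counts, and only the small trace identity $\mathrm{Tr}_{\ftwom/\FF_{2^d}}(1)=1$ is needed to complete the enumeration.
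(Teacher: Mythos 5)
The paper does not actually prove this theorem --- it is quoted verbatim from Helleseth--Kholosha --- so there is no internal proof to compare against; judged on its own, your argument is a correct and essentially complete elementary proof of the dichotomy $r(a)\in\{0,1,2,2^{d}+1\}$ and of parts (i), (iii) and (iv). The two substitutions do produce $v^{q}+(\alpha+1)v+(\alpha+1)=0$; since $\alpha\neq 1$, $v=0$ is never a solution, so the roots of $f_a$ other than $\alpha$ biject with the solutions of this affine equation, whose number is $0$, $1$ or $\lvert\ker L_\alpha\rvert=2^{d}$ because $\gcd(2^{k}-1,2^{m}-1)=2^{d}-1$. The $\lvert Z_2\rvert$ count is sound (a solution set of size exactly one forces the kernel to be trivial), and the bijection $v\mapsto(v^{q}+v+1)/(v+1)$ on $\ftwom\setminus\{0,1\}$, combined with $\Tr_{\ftwom/\FF_{2^{d}}}(1)=m/d\equiv 1$ to rule out $v^{q}+v=1$ (correctly identified as the only place $m$ odd enters), does give $\sum_a\binom{r(a)}{2}=2^{m-1}-1$ and hence (iv).

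The weak point is the closing assertion that ``$\lvert Z_1\rvert$ follows from $\sum_a r(a)=2^{m}-1$.'' Carrying that step out with your values of $\lvert Z_2\rvert$ and $\lvert Z_{2^{d}+1}\rvert$ yields $\lvert Z_1\rvert=\bigl((2^{m}-1)-(2^{m-d}-1)\bigr)/(2^{d}-1)=2^{m-d}$, not $2^{m-1}$; the two coincide only when $d=1$. In fact item (ii) as printed is false for $d>1$: with $\lvert Z_1\rvert=2^{m-1}$ the four stated cardinalities do not sum to $2^{m}$ (take $m=9$, $k=3$: $228+256+219+1=704\neq 512$), whereas with $2^{m-d}$ they do, and a direct check of $x^{9}+ax+a$ over $\FF_{2^{9}}$ (or over $\FF_{2^{3}}$) confirms $2^{m-d}$. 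So your method is exposing a misprint in the quoted statement rather than harbouring an error of its own --- but as written your proof claims to derive (ii), which it cannot. You should state the corrected value $\lvert Z_1\rvert=2^{m-d}$ explicitly, observing that every application in the paper has $\gcd(k,m)=1$, so nothing downstream is affected.
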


\begin{lemma}\label{Six-Five-Polys}
	Let $m$ be an odd number, $a\in\ftwom$, $a\neq 0$, $f_a(x)=x^5+ax+a$, and $g_a(x)=x^6+x^5+a$. Then,  $g_a(x)=0$ has exactly two solutions in $\ftwom$ if and only if $f_a(x)=0$ has exactly one solution in $\ftwom$. Furthermore, If $f_a(x)=0$ has three or no solutions, then $g_a(x)=0$ has no solution.
	
\end{lemma}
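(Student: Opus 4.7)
The strategy is a cardinality match followed by a single set-inclusion. By Theorem~\ref{root-counting} with $k=2$ and $d=\gcd(2,m)=1$ (since $m$ is odd), $|Z_2|=0$, so $f_a(x)=0$ has $0$, $1$, or $3$ roots in $\ftwom$, with $|Z_1|=2^{m-1}$ and $|Z_3|=(2^{m-1}-1)/3$. Since $f_0(x)=x^5$ has the unique root $x=0$, we have $0\in Z_1$, and hence $|Z_1\cap\ftwomstar|=2^{m-1}-1$. On the other hand, parts (i) and (iii) of Lemma~\ref{sextic-equation} give that $g_a$ has $0$ or $2$ roots in $\ftwom$ for every $a\in\ftwomstar$, and $\{a\in\ftwomstar:g_a\text{ has two roots}\}=D(g)^*$ has size $2^{m-1}-1$. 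Likewise $|(Z_0\cup Z_3)\cap\ftwomstar|=(2^m+1)/3+(2^{m-1}-1)/3=2^{m-1}=|\{a\in\ftwomstar:g_a\text{ has no root}\}|$.

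Because these cardinalities agree, both statements of the lemma reduce to the single inclusion
\[
\{a\in\ftwomstar:g_a\text{ has two roots}\}\subseteq Z_1.
\]
The biconditional then follows from equality of finite sets of equal size, and the ``furthermore'' clause is automatic: any $a\in(Z_0\cup Z_3)\cap\ftwomstar$ is disjoint from $Z_1$, so $g_a$ cannot have two roots and hence has none.

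To prove the inclusion, I would take two distinct roots $u,v\in\ftwom$ of $g_a$ and, using the parameterization from the proof of Lemma~\ref{sextic-equation}(i), write $u=(z^6+z)/(z^6+1)$ and $v=(z^5+1)/(z^6+1)$ for some $z\in\ftwom\setminus\{0,1\}$, giving $a=z^5(z+1)(z^5+1)^5/(z^6+1)^6$. The goal is then to exhibit $y\in\ftwom$ with $f_a(y)=0$, i.e.\ $y^5=a(y+1)$. Because $\gcd(5,2^m-1)=1$, substituting $y+1=w^5$ transforms this into a Bluher-form quintic $w^5+a^{1/5}w+1=0$ whose root we need in $\ftwom$. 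The main obstacle is the explicit construction of such a $w$ (equivalently of a rational $y=\psi(z)\in\ftwom(z)$). An alternative, perhaps cleaner route is to prove the contrapositive ``$a\in Z_3\Rightarrow a\notin D(g)^*$'' by parameterizing the three $\ftwom$-roots of $f_a$ through their symmetric functions: matching coefficients in $(x^3+s_1x^2+s_2x+s_3)(x^2+s_1x+s_1^2+s_2)=f_a(x)$ forces $s_3=s_1^3$ and $s_2^2+s_1^2(s_1+1)(s_1^2+s_2)=0$, with $a=s_1^5(w+1)$ for $w=s_2/s_1^2$; one then applies Corollary~\ref{NoPoint-Cor} to the resulting curve in $(s_1,\xi)$-space (with $\xi=x/s_1$) obtained by also imposing $g_a(x)=0$ to rule out any simultaneous $\ftwom$-solution.
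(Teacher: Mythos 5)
Your reduction is sound, and it essentially reproduces the counting half of the paper's own argument: writing $G=\{a\in\ftwomstar: g_a \text{ has two roots in }\ftwom\}$, one has $|G|=|Z_1\cap\ftwomstar|=2^{m-1}-1$, so everything hinges on the single inclusion $G\subseteq Z_1$. The genuine gap is that neither of your two routes proves this inclusion, and you present them as alternatives when each one addresses only half of it. Route one aims only to \emph{exhibit} a root of $f_a$, and even that construction is left open (you call it ``the main obstacle''; the paper simply writes the root down, namely $\alpha=\frac{z^5+z^4+z^3+z^2+z}{z^6+z^5+z^3+z+1}$). But even with the root in hand you would only have shown $a\notin Z_0$, i.e.\ $G\subseteq Z_1\cup Z_3$, and since $|Z_1\cap\ftwomstar|+|Z_3|>|G|$ the cardinality match does not then force $G=Z_1\cap\ftwomstar$: the three-root case is not excluded. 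Route two, conversely, would (if completed) prove only $G\cap Z_3=\emptyset$, i.e.\ $G\subseteq Z_0\cup Z_1$; it says nothing about $Z_0$, and excluding $Z_0$ is not automatic --- it is precisely the existence statement of route one. So only the conjunction of the two routes, each carried to completion, yields $G\subseteq Z_1$, and as written neither is complete: in route two you never write down the plane curve in $(s_1,\xi)$, nor verify the hypotheses of Corollary~\ref{NoPoint-Cor} (irreducibility over $\ftwo$, reducibility over $\FF_4$, absence of finite double points over odd-degree extensions), which is exactly the kind of verification the paper needs MAGMA for in an analogous situation.

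For comparison, the paper avoids any separate $Z_3$-analysis: after exhibiting $\alpha$ it proves uniqueness indirectly, by counting the affine $\ftwom$-points of the curve $x^5+(x+1)(y^6+y^5)=0$. This curve is birational to $y^2+y=\frac{x}{x+1}$ and hence has exactly $2^m$ points, while the pairs (root of $f_a$, root of $g_a$) already contribute at least $2\cdot 2^{m-1}=2^m$ points; equality then forces $f_a$ to have exactly one root whenever $g_a$ has two, after which the converse follows from the same $|Z_1|=2^{m-1}$ count you invoke. To repair your proposal you must either supply such a point count (or some other argument ruling out $Z_3\cap G\neq\emptyset$) \emph{in addition to} the explicit root, or carry out both of your sketched routes in full; as it stands, the key inclusion is asserted but not proved.
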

\begin{proof}
First we prove that if $g_a(x)=0$ has two solutions in $\ftwom$, then $f_a(x)=0$ has exactly one solution in $\ftwom$. If $a=0$, then the claim is trivial. So, suppose $a\neq 0$, and suppose that $g_a(x)=0$ has two solutions  in $\ftwom$. Notice that from the previous lemma we know that $g_a(x)=0$ has either zero or two solutions in $\ftwom$. Now, on the one hand from the proof of the previous lemma we see that for a unique $z\in\ftwom$
\begin{equation}
a=(\frac{1+z^5}{1+z^6})^6+(\frac{1+z^5}{1+z^6})^5,
\end{equation}
and on the other hand it is easy to check that $f_a(\alpha)=0$ where
\[
\alpha=\frac{ z^5 + z^4 + z^3 + z^2 + z}{z^6 + z^5 + z^3 + z + 1}.
\]
Thus, whenever $g_a(x)=0$ has two roots, then $f_a(x)=0$ has at least one root. To show that $f_a(x)=0$ has exactly one root, we give an indirect proof. To start with, we notice that if $u$ is a solution of $f_a(x)=0$, and $v$ is a solution of $g_a(x)=0$, then
\begin{equation}\label{sextic-equation}
\frac{u^5}{u+1}=v^6+v^5=a,
\end{equation}
and hence $(u,v)$ is a point on the curve $C$ given by
\[
C:\;\;x^5+(x+1)(y^6+y^5)=0. 
\]
Now, let 
\[
A=\left\{(a,u,v)|a,u,v\in \ftwom,\frac{u^5}{u+1}=v^6+v^5=a \right\},
\]
and 
\[
A_a=\left\{(u,v)|u,v\in \ftwom,\frac{u^5}{u+1}=v^6+v^5=a \right\}.
\]
Then using the facts that $x^6+x^5$ induces a two to one map by Lemma~\ref{sextic-equation} and the fact that for given $a$ whenever the equation $g_a(x)=0$ has two solutions, then $f_a(x)$ has at least one solution, we deduce that:
\begin{itemize}
\item[(i)] $|A_a|\ge2$ for $2^{m-1}$ values of $a$, and
 
 \item[(ii)]  $|A_a|=0$ for the rest of the values of $a$. 
\end{itemize}  
Hence
$$
|A|=\sum_{a\in\ftwom}|A_a|\ge 2^{m-1}.2=2^m. 
$$
From these facts and the fact that $\#C=|A|$, we see that in order to prove our claim, it suffices to prove that there are exactly $2^m$ points on the curve $C$ since this would imply that if for some $a\in\ftwom$, $|A_a|\neq 0$, then $|A_a|=2$. So, in the rest of the prove we show that $\#C=2^m$.

Now, let $E$ be the curve given by 
\[
E:\;\;x+(x+1)(y^2+y)=0,
\]
 and let $\zeta$ be the rational map given by
\begin{eqnarray*}
	\zeta:& C&\longrightarrow E\\\nonumber
	&(x,y)&\longrightarrow (\frac{x}{y},\frac{xy}{x+y}+(\frac{x}{y})^2).\nonumber
\end{eqnarray*}

It follows that $\zeta$ is well defined map (birational map) with the inverse map $\zeta^{-1}$ given by
\begin{eqnarray*}
	\zeta^{-1}:& E&\longrightarrow C\\\nonumber
	&(x,y)&\longrightarrow ((x+1)(y+x^2),\frac{(x+1)(y+x^2)}{x}).\nonumber
\end{eqnarray*}
From the definition of $\zeta$ and $\zeta^{-1}$, we see that 
$\zeta^{-1}\circ \zeta$ and $\zeta\circ \zeta^{-1}$ are identity maps on $C-{(0,0)}$ and $E-{(0,0)}$, respectively, and hence $\#C(\ftwom)=\#E(\ftwom)$. But rewriting the equation of $E$ as
\[
E:\;\; y^2+y=\frac{x}{x+1},
\]
for $x\neq 1$ and using Lemma~\ref{Quad-zeros}, we have $\#E(\ftwom)=2^m$. Notice that $x\mapsto \frac{x}{x+1}$ is a bijection of $\ftwom\setminus \{1\}$ and for $2^{m-1}$ values of $x$ we have $\Tr(\frac{x}{x+1})=0$. This finishes the proof of one direction.

Now, to prove the converse, let $G$ and be the set of all values of $a$ for which $g_a(x)=0$ has two solutions, and similarly $F$ be the set of all values of $a$ for which $f_a(x)=0$ has exactly one solution. By the fact that $x^6+x^5$ induces a two to one map, we have $|G|=2^{m-1}$, and by Theorem~\ref{root-counting} we have $|F|=2^{m-1}$, and furthermore  from the converse direction we have $G\subset F$. Thus $F=G$, and hence whenever $f_a(x)=0$ has one solution, then $g_a(x)=0$ has exactly two solutions. The rest of the claims are trivial.

\end{proof}


\section{The value-set equivalence of some of the trinomials}\label{Value-set-equivalence}
In this section, we show that in order to prove Theorem~\ref{Ding-conj-diff}, it suffices to prove the claim of Theorem~\ref{Ding-conj-diff} for four rational functions. First, we need a definition.

\begin{definition}
	Let $f$ and $g$ be two functions over $\ftwomstar$. We call $f$ and $g$, {\it{value-set equivalent}} and write $f\sim g$ if  $D(f)^*=D(g)^*$.
\end{definition}	

Obviously, $\sim$ is an equivalence relation, and if $f\sim g$ and $D(f)^*$ is a difference set with singer parameters in $\ftwomstar$, then so is $D(g)^*$.  Hence in order to prove Theorem~\ref{Ding-conj-diff}, it suffices to identify the value-set equivalence classes of the functions appearing in Theorem~\ref{Ding-conj-diff} and prove the theorem for the representative of each class. The following theorem shows that the set of polynomials appearing in the conjecture can be partitioned into at most four equivalence classes. 

\begin{theorem}\label{PartitionPolys}
Let $m$ be an odd number, and let $f_1,\ldots,f_{11}$ be the polynomials appearing in Theorem~\ref{Ding-conj-diff}. Furthermore, let $\sigma=2^{\frac{m+1}{2}}$. Then
\begin{itemize}
	
\item[(a)] $f_5\sim f_8\sim f_{11}\sim x^{-4}+x^6+x$.	
	
\item[(b)] $f_1\sim f_2\sim {{x}^{3}}+{{x}^{20}}+{{x}^{-48}}$, 

\item[(c)] $f_4\sim f_6\sim f_9\sim x^{-\frac{\sigma}{2}}+x^{-\frac{\sigma - 1}{2}}+x $, and

\item[(d)] $f_3\sim f_7\sim f_{10}\sim x^{3\sigma+4}+x^{-2}+x $.

 \end{itemize}
\end{theorem}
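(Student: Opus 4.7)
The plan is to exploit the elementary observation that if $\gcd(k,2^m-1)=1$ then $x\mapsto x^k$ is a bijection of $\ftwomstar$, and hence $f\sim f(x^k)$ for any function $f$ on $\ftwomstar$. On $\ftwomstar$ the monomial $x^e$ depends only on the class of $e$ in $\ZZ/(2^m-1)\ZZ$, and since $m$ is odd $2^m-1$ is coprime to $6$; moreover $\sigma^2=2^{m+1}\equiv 2$ and $(1-\sigma)(1+\sigma)=1-\sigma^2\equiv -1\pmod{2^m-1}$, so $2$, $3$, and $1\pm\sigma$ are all units in $\ZZ/(2^m-1)\ZZ$. The whole proof thus reduces to a modular-arithmetic exercise: for each $f_i$, exhibit an integer $k_i$ coprime to $2^m-1$ such that the triple of exponents of $f_i(x^{k_i})$ agrees, modulo $2^m-1$, with that of the declared representative.

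Reducing the exponents of each $f_i$ modulo $2^m-1$ using $2^m\equiv 1$ and (where needed) $2^{m-1}\equiv 1/2$, one sees immediately that the representatives of parts (a), (c), and (d) are realized directly by $f_{11}$, $f_4$, and $f_3$ respectively, so for these three no substitution is required. For the remaining eight trinomials, the plan is to verify that the substitutions
\[
k_1=3,\ k_2=-48,\ k_5=6,\ k_8=-4,\ k_6=-\sigma/2,\ k_9=(1-\sigma)/2,\ k_7=3\sigma+4,\ k_{10}=-2
\]
do the job, with exponents involving $\sigma$ understood modulo $2^m-1$. That each $k_i$ is coprime to $2^m-1$ follows from the unit observations above: $k_1,k_2,k_5,k_8$ are products of $2$'s and $3$'s; $k_6=-2^{(m-1)/2}$ is a power of $2$; $k_{10}=-2$; $k_9$ is a unit because $2$ and $1-\sigma$ are; and $(3\sigma+4)(-3\sigma+4)=16-9\sigma^2\equiv -2$ shows that $k_7$ is a unit with inverse $(3\sigma-4)/2$.

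The verification that $f_i(x^{k_i})$ equals the target is then purely mechanical: one multiplies each of the three exponents of $f_i$ by $k_i$ and repeatedly applies $2^m\equiv 1$ together with $\sigma^2\equiv 2$ in parts (c) and (d). For example, $k_7(-3\sigma+4)=16-9\sigma^2\equiv -2$ and $k_7(3\sigma/2-2)=9\sigma^2/2-8\equiv 1$, which together with the third exponent $k_7=3\sigma+4$ yields $f_7(x^{k_7})=x^{-2}+x+x^{3\sigma+4}$; similarly $k_9(-2\sigma-2)=(1-\sigma)(-\sigma-1)=\sigma^2-1\equiv 1$ and $k_9(\sigma+2)=(2-\sigma-\sigma^2)/2\equiv -\sigma/2$, and the other six verifications are analogous one-line identities.

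The only nontrivial aspect is the bookkeeping in parts (c) and (d), where the relation $\sigma^2\equiv 2$ must be juggled with the modular inverse of $2$ throughout; once the list of $k_i$'s above is in hand, the computations are routine and proceed uniformly for every odd $m\ge 5$, establishing the four claimed equivalences.
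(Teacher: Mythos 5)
Your proposal is correct and follows essentially the same route as the paper: it reduces each claim to substituting $x\mapsto x^{k_i}$ with $k_i$ coprime to $2^m-1$ and reducing exponents modulo $2^m-1$ (using $2^m\equiv 1$ and $\sigma^2\equiv 2$), and your choices of $k_i$ coincide with the paper's substitutions. The only difference is cosmetic: you make the coprimality of the $k_i$'s explicit, which the paper leaves implicit.
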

\begin{proof} First, we notice that if $f$ and $g$ are two functions over $\ftwomstar$ and there
	 exists some permutation $\psi:\ftwomstar\longrightarrow \ftwomstar$ such that $f(a)=g(\psi(a))$ for every $a\in\ftwomstar$, then $f\sim g$. Now, if $t$ is an integer number such that $\gcd(t,2^m-1)=1$, then $\psi(x)=x^t$  is a permutation of $\ftwom$. From this fact we deduce that
$\phi:x\rightarrow x^6$ is a permutation of $\ftwom$ when $m$ is an odd number, and 	
\[
f_5(x)\sim f_5(x^6)=x^{5.2^m-4}+x^{4.2^m-8}+x.
\]
But this in turn, as $x^{2^m}=x$ in $\ftwom$, implies that
\[
 f_5\sim f_5(x^6)\sim x+x^{-4}+x^6.
\]
 Similarly, again, using  $x^{2^m}=x$ in $\ftwom$, we have 
\[
f_8\sim f_8(x^{-4})\sim x+x^{-4}+x^6,
\]
and 
\[
f_{11}\sim x+x^{-4}+x^6,
\]
 from which we deduce part (a).
 
In a completely similar fashion we have

$$f_1(x)\sim {{f}_{1}}({{x}^{3}})\sim x^{3.2^m-51}+x^{2^m+19}+x^3\sim {{x}^{-48}}+{{x}^{20}}+{{x}^{3}},$$ and

	$$ f_2(x)\sim {{f}_{2}}({{x}^{-48}})\sim {{x}^{3}}+{{x}^{20}}+{{x}^{-48}}$$
which proves part (b). Finally, the remaining parts follow from the following relations:

\begin{itemize}

	\item[(c)] $f_4(x)\sim f_6(x^{-\frac{\sigma}{2}})\sim f_9(x^{-\frac{\sigma-1}{2}})\sim x^{-\frac{\sigma}{2}}+x^{-\frac{\sigma - 1}{2}}+x$,
	
	\item[(d)] $f_3(x)\sim f_7(x^{3\sigma+4})\sim f_{10}(x^{-2})\sim x^{3\sigma+4}+x^{-2}+x.$

\end{itemize}

\end{proof}

\section{The proof of  the main result}\label{Main-result-proof}
Using Theorem~\ref{PartitionPolys}, in order to prove Theorem~\ref{Ding-conj-diff}, it suffices to prove that the four rational functions $x^{-4}+x^6+x$, ${{x}^{3}}+{{x}^{20}}+{{x}^{-48}}$, $ x^{3\sigma+4}+x^{-2}+x $ and  $x^{-\frac{\sigma}{2}}+x^{-\frac{\sigma - 1}{2}}+x $ where $\sigma=2^{\frac{m+1}{2}}$ give rise to difference sets with Singer parameters. To do so, in the following four subsections corresponding to the four rational functions, first we prove that the size of the  corresponding value-sets are $2^{m-1}$ and then prove that the corresponding value-sets are difference sets.

\subsection{\boldmath{$x^{-4}+x^6+x$}}
\begin{theorem}\label{Low-degree}
 Let $f(x)=x^{-4}+x^6+x$, and let $m$ be an odd number. Then $f$ induces a $(0,1,4)$-map over $\ftwom$, and furthermore 
 $|D(f)^*|=2^{m-1}$.
\end{theorem}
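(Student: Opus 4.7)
The plan is to analyze the collisions $f(x)=f(z)$ for $x\ne z\in\ftwomstar$ by reducing the collision condition to a cubic that factors completely over $\ftwo$; the $(0,1,4)$-map property will then follow from a case analysis on $\Tr(x^5)$, and the value-set size from a double count of collision pairs. First I will verify that $f$ maps $\ftwomstar$ into $\ftwomstar$: the equation $f(x)=0$ is equivalent, after multiplying by $x^4$, to $x^{10}+x^5+1=0$, and since $x^{10}+x^5+1=(x^{15}-1)/(x^5-1)$, its roots are the $15$-th roots of unity that are not $5$-th roots, which lie in $\FF_{16}$; as $\gcd(15,2^m-1)=1$ for odd $m$, the only candidate root in $\ftwom$ is $1$, which is easily checked not to satisfy the equation.

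For the collision curve, setting $s=x+z$, $p=xz$ and using the characteristic-two identities $x^6+z^6=s^2(s^4+p^2)$ and $x^{-4}+z^{-4}=s^4/p^4$, the equation $f(x)+f(z)=0$ becomes $sp^6+(s^5+1)p^4+s^3=0$ after dividing by $s$ and multiplying by $p^4$. Viewing this as a cubic in $q=p^2$, the Tschirnhaus substitution $q=r+s^4+s^{-1}$ depresses it to $r^3+(s^8+s^{-2})r+s^2=0$; normalizing via $R=r/s^4$ and $T=s^{-5}$ yields
\[
R^3+(1+T^2)R+T^2=(R+1)(R^2+R+T^2)=0.
\]
Thus $R=1$ is always a root, and the quadratic $R^2+R+T^2=0$ contributes two further roots exactly when $\Tr(s^{-5})=0$ (Lemma~\ref{Quad-zeros}).

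Next I will translate each root type back into a condition on $s$ for fixed $x\in\ftwomstar$, parameterizing collision partners by $z=x+s$ with $p=x^2+xs$ (and noting $\Tr(p/s^2)=\Tr(x/s)+\Tr((x/s)^2)=0$ automatically, so no trace obstruction arises). The root $R=1$ corresponds to $p=s^{-1/2}$; combined with $p=x^2+xs$ and after squaring, this produces the cubic $s^3+x^2 s+x^{-2}=0$, which by Lemma~\ref{Cubic-Berlekamp} has $3$ or $0$ roots when $\Tr(x^5)=1$ and exactly $1$ root when $\Tr(x^5)=0$. The quadratic factor $R^2+R+T^2=0$ corresponds to $p(p+s^2)=s^{3/2}$; substituting $p=x^2+xs$ and invoking the identity $(x^2+xs)(x^2+xs+s^2)=x^4+xs^3$ gives $x^4+xs^3=s^{3/2}$, which after squaring and the substitutions $u=s^3$, $v=ux^2$ reduces to the Artin-Schreier equation $v^2+v=x^{10}$, having $2$ solutions if $\Tr(x^5)=0$ and none if $\Tr(x^5)=1$. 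Hence the number of collision partners of $x$ is $1+2=3$ when $\Tr(x^5)=0$, and $0$ or $3$ when $\Tr(x^5)=1$; in every case the fiber of $x$ has size $1$ or $4$, so $f$ is a $(0,1,4)$-map.

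Finally, let $a$ and $b$ count the singleton and size-$4$ fibers, so $a+4b=2^m-1$. I will count unordered collision pairs two ways: they equal $6b$, and they also equal $2\cdot|\{s\in\ftwomstar:\Tr(s^{-5})=0\}|=2(2^{m-1}-1)$, because for each such $s$ exactly $2$ of the $3$ cubic roots $R$ satisfy the validity condition $\Tr(R)=1+\Tr(T)$ (equivalent to $\Tr(p/s^2)=0$), while for $\Tr(s^{-5})=1$ the unique root $R=1$ fails it. Hence $b=(2^{m-1}-1)/3$, $a=(2^m+1)/3$, and $|D(f)^*|=a+b=2^{m-1}$. The crux is the algebraic identity that, precisely under the normalization $T=s^{-5}$, the cubic $R^3+(1+T^2)R+T^2$ splits off the factor $R+1$; the remaining routine verifications—the polynomial identity $(x^2+xs)(x^2+xs+s^2)=x^4+xs^3$, the Artin-Schreier reduction, and the exclusion of the degenerate cases $s\in\{0,x\}$—are straightforward but require some bookkeeping.
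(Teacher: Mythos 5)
Your proof is correct; I checked the algebra (the reduction to $sp^6+(s^5+1)p^4+s^3=0$, the factorization $R^3+(1+T^2)R+T^2=(R+1)(R^2+R+T^2)$ with $T=s^{-5}$, the two fiber-branch reductions, and the trace-validity criterion $\Tr(R)=1+\Tr(T)\iff\Tr(p/s^2)=0$) and it all holds, including the degenerate-case exclusions you defer to the end. The route is genuinely different in execution from the paper's, though the skeleton is parallel. The paper splits the collision curve $f(x)=f(y)$ into explicit components $C_1$, $C_2:x^3y^2+x^2y^3+1=0$ and $C_3$, counts $\#C_2(\ftwom)$ and $\#C_3(\ftwom)$ globally via birational maps to the Artin--Schreier curve $y^2+y=x^5$, proves the $(0,1,4)$ property by a separate fiber analysis (using Lemma~\ref{Cubic-Berlekamp} on $x^5(z^3+z)+1=0$), and finally extracts the fiber count from $12s=\#C_2+\#C_3-1$. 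You instead pass to symmetric coordinates $(s,p)=(x+z,xz)$, where the same decomposition appears as the factorization of a cubic in $p^2$ (your $R=1$ branch is exactly the paper's $C_2$, since $x^3y^2+x^2y^3+1=sp^2+1$), and you never need to count rational points on the components: the fiber analysis comes from Berlekamp's criterion plus one Artin--Schreier equation in the variable $s$, and the global count is a clean double count over $s\in\ftwomstar$ with $\Tr(s^{-5})=0$, using the observation that exactly two of the three roots $R$ pass the rationality test. What your version buys is a more uniform treatment (everything reduces to trace conditions in one variable) and it makes transparent \emph{why} exactly two of the three candidate partners per $s$ survive; what the paper's version buys is reusable point counts on the component curves, which it exploits again in the proof of Theorem~\ref{EndCase1} (disjointness from $D(x^6+x)^*$), a step your argument does not supply but which is not part of this statement. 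One small point worth making explicit in a final write-up: the dichotomy ``$0$ or $3$ roots versus $1$ root'' from Lemma~\ref{Cubic-Berlekamp} requires $s^3+x^2s+x^{-2}$ to be separable, which holds since a repeated root would force $s=x$ and hence $x^{-2}=0$.
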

\begin{proof}
In order to prove the claim of the theorem, instead of trying to see what elements appear in $D(f)^*$, we give an indirect proof by looking at the solutions $(x,y)$ of the equation $f(x)=f(y)$. Observe that if $(x,y)$ is a solution of the equation $f(x)=f(y)$, then it is a point on the curve $C$ given by
\[
C: y^4(x^{10}+x^5+1)+x^4(y^{10}+y^5+1)=0.
\]
It is easy to see that $C$ is the union of the following three curves:
\[
C_1: x+y=0,\; C_2: x^3y^2 + x^2y^3 + 1=0, 
\]
and 
\[
C_3: x^6y^2 + x^4y^4 + x^3 + x^2y^6 + x^2y + xy^2 + y^3=0. 
\]
Obviously, all the trivial solutions of the equation $f(x)=f(y)$, i.e., the solutions where $x=y$ correspond to the points on $C_1$, and the remaining solutions correspond to the points on $C_2$ and $C_3$. The rest of the proof involves three steps. In the first step, we find the number of affine $\ftwom$-rational points on $C_2$ and $C_3$. In the second step, we prove that $f$ is a $(0,1,4)$-map, and finally in the third step, the results of the previous steps are used to obtain the size of the punctured value-set of $f$. 
 
Now, we proceed to the first step:
\begin{itemize}
    \item[(i)] $\#C_2(\ftwom)$: Let $D$ be the curve given by $y^2+y=x^5$, and let $\eta$ be the rational map given by
    \begin{eqnarray*}
    \eta:& C_2&\longrightarrow D\\\nonumber
    &(x,y)&\mapsto (xy,x^3y^2).\nonumber
    \end{eqnarray*}
    It is easy to verify that $\eta$ is a birational map with the inverse map $\eta^{-1}$ given by 
    \begin{eqnarray*}
    \eta^{-1}:& D&\longrightarrow C_2\\\nonumber
    &(x,y)&\mapsto (\frac{y}{x^2},\frac{x^3}{y}).\nonumber
    \end{eqnarray*}
   Furthermore, from the definition of $\eta$ and $\eta^{-1}$ we see that $\eta^{-1}\circ \eta$ is the identity map on the affine piece of $C_2$, and $\eta\circ\eta^{-1}$ is the identity map on the affine piece of $D$ except at the points $(0,0)$ and $(0,1)$ on which $\eta\circ \eta^{-1}$ is not defined. Thus, using Lemma~\ref{Quad-zeros} and the fact $\psi: x\mapsto x^5$ is an bijection on $\ftwom$ when $m$ is an odd number, we have
   \[
   \#C_2(\ftwom)=\#D(\ftwom)-2=2^m-2.
   \]
    \item[(ii)] $\#C_3(\ftwom)$:  Let $E_1$ be the curve given by $x^5(y^3+1)^2+y^3=0$, and let $\zeta$ be the rational map given by
    \begin{eqnarray*}
    \zeta:& C_3&\longrightarrow E_1\\\nonumber
    &(x,y)&\longrightarrow (x,\frac{x+y}{x}).\nonumber
    \end{eqnarray*}
    It follows that $\zeta$ is a birational map with the inverse map $\zeta^{-1}$ given by
    \begin{eqnarray*}
    \zeta^{-1}:& E_1&\longrightarrow C_3\\\nonumber
    &(x,y)&\mapsto (x,x(y+1)).\nonumber
    \end{eqnarray*}
    Now, from the definition of $\zeta$ and $\zeta^{-1}$, we see that 
       $\zeta^{-1}\circ \zeta$ and $\zeta\circ \zeta^{-1}$ are identity maps on $C_3-{(0,0)}$ and $E_1-{(0,0)}$, respectively. Thus, considering the fact that the point $(0,0)$ lies on both $C_3$ and $E_1$, we get $$\#C_3(\ftwom)=\#E_1(\ftwom).$$ So, we need to compute $\#E_1(\ftwom)$. In order to compute $\#E_1(\ftwom)$, we notice that the map $y\mapsto y^3$ is an automorphism of $\ftwom$ whenever $m$ is an odd number, and hence we need to compute the number of affine $\ftwom$-rational points of the curve $E_2: x^5(y+1)^2+y=0$ which is birational to $E_3: y^2+y=x^5$ with the birational map taking $(x,y)$ on $E_2$ to $(x,\frac{1}{y+1})$ on $E_3$. Again using Lemma~\ref{Quad-zeros} and the fact that the map taking $x$ to $x^5$ is an isomorphism of odd-degree extensions of $\ftwo$ and arguments similar to the previous ones we get $$\#C_3(\ftwom)=\#E_1(\ftwom)=\#E_2(\ftwom)=\#E_3(\ftwom)-1=2^m-1.$$
\end{itemize}
Having completed the first step of the proof, we proceed to the second step of the proof. We first notice that if there are $l$ points $(a,b_1),(a,b_2),\ldots,(a,b_l)$ on $C_2$ and $C_3$, then $f(a)=f(b_1)=\cdots=f(b_l)$, and hence $f(a)$ has $l+1$ preimages. Thus, in order to prove that $f$ is a $(0,1,4)$- map, we need to prove that for any $a\in\ftwom$, there are either none or in total three points on $C_2$ and $C_3$ with $x$-coordinate equal to $a$ and $y$-coordinate in $\ftwom$. To this end, examining the proof of Case (ii) of the previous step, we see that for any $a$ in $\ftwom$, the total number of points with $x$-coordinate equal to $a$ and $y$-coordinate in $\ftwom$ on $C_3$ is the same as the total number of points with $x$-coordinate equal to $a$ and $y$-coordinate in $\ftwom$ on $E_3: y^2+y=x^5$ which is two if $Tr(a^5)=0$ and zero, otherwise. So, we need to prove:
\begin{itemize}
    \item[(a)] for any $a\in \ftwom$, there is exactly one point with $x$-coordinate equal to $a$ on $C_2$ if and only if there are exactly two points on $E_3$ with $x$-coordinates equal to $a$, and 
    \item[(b)] if there are three points with $x$-coordinate equal to $a$ on $C_2$, then there is none on $E_3$.
\end{itemize}
We prove (a) and (b) by letting $y=xz$ in the equation of the curve $C_2$, which results in 
\[
E_4: x^5(z^3+z)+1=0.
\]
To prove (a), notice that there is exactly one point on $C_2$ with $x$-coordinate equal to $a$ if and only if there is exactly one point on $E_4$ with $x$-coordinate equal to $a$. But using Lemma~\ref{Cubic-Berlekamp} there is exactly one point on $E_4$ with $x$-coordinate equal to $a$ if and only if $Tr(a^{10})=Tr(a^{5})=0$ which is true if and only if there are exactly two points with $x$-coordinate equal to $a$ on $E_3$. Case (b) can be proved similarly using Lemma~\ref{Cubic-Berlekamp}. This finishes the proof of the second step. 

Now, in order to obtain the size of $D(f)^*$, we define an equivalence relation between the elements of $\ftwom$. We say two elements $a$ and $b$ are equivalent and write $a\equiv b$ in $\ftwom$ if $f(a)=f(b)$. Obviously, $\equiv$ is an equivalence relation, and finding the size of the $D(f)^*$ is equivalent to finding the number of the equivalence classes of $\equiv$. In the second step we proved that $f$ is a $(0,1,4)$-map. Thus every equivalence class of $\equiv$ has either one or four elements. We see that each equivalence classes of size four like $\{a_1,a_2,a_3,a_4\}$ contributes four points to the set of points on $C_1$, namely $(a_1,a_1),(a_2,a_2),(a_3,a_3),(a_4,a_4)$, and twelve points to the set of points which lie on either $C_2$ and $C_3$. So, if the number of equivalence classes of size one and four are denoted by $r$ and $s$, respectively, then since we have to exclude the point $(0,0)$ on $C_3$ from our count we get that
\[
12s=(\#C_2(\ftwom))+(\#C_3(\ftwom)-1)=2^{m+1}-4,
\]
and hence $s=\frac{2^{m-1}-1}{3}$. Finally, since $r=2^m-4s$, we find that $r+s$, the total number of equivalence classes, is 
\[
r+s=2^m-3s=2^{m-1}+1,
\]
and hence excluding the equivalence class corresponding to zero we get $|D(f)^*|=2^{m-1}$.

\end{proof}

\begin{theorem}\label{EndCase1}
 Let $f(x)=x^{-4}+x^6+x$, and let $m$ be an odd number. Then $D(f)^*$ is a difference set with Singer parameters $(2^m-1,2^{m-1},2^{m-2})$ in $\ftwomstar$.
\end{theorem}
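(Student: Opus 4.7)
From Theorem~\ref{Low-degree} we already have $|D(f)^*|=2^{m-1}$ together with a partition $D(f)^*=S_1\sqcup S_4$ by fibre size, where $|S_1|=(2^m+1)/3$ and $|S_4|=(2^{m-1}-1)/3$. What remains in order to pin down the Singer parameters $(2^m-1,2^{m-1},2^{m-2})$ is to show that, for every $a\in\ftwomstar\setminus\{1\}$,
\[
\lambda(a)\;:=\;\#\{(u,v)\in D(f)^*\times D(f)^*:\ u=av\}\;=\;2^{m-2}.
\]
My plan is to count $\lambda(a)$ as the number of affine $\ftwom$-rational points on a parametric family of curves $\mathcal{C}_a$, mirroring the $a=1$ argument already carried out in the proof of Theorem~\ref{Low-degree}.

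Fix $a\in\ftwomstar$ and let $M(a)=\#\{(x,y)\in\ftwomstar\times\ftwomstar:\ f(x)=af(y)\}$. Clearing denominators turns $f(x)=af(y)$ into
\[
\mathcal{C}_a:\ y^4(x^{10}+x^5+1)\;=\;a\,x^4(y^{10}+y^5+1),
\]
so $M(a)$ is the number of $\ftwom$-rational points of $\mathcal{C}_a$ with $xy\neq 0$. For $a=1$ this is exactly the curve $C=C_1\cup C_2\cup C_3$ dissected in the proof of Theorem~\ref{Low-degree}; for $a\neq 1$ the diagonal component $C_1$ drops out (since $f$ has no zeros in $\ftwomstar$, $f(x)=af(x)$ forces $a=1$), and I would seek a parallel factorization of $\mathcal{C}_a$, computing each component's rational-point count via the birational reductions to $y^2+y=x^5$ used in Theorem~\ref{Low-degree}, together with the curve machinery of Section~\ref{Prem} (Lemma~\ref{No-point}, Corollary~\ref{NoPoint-Cor}) and the sextic difference-set identities of Lemma~\ref{sextic-equation}.

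The passage from $M(a)$ to $\lambda(a)$ uses the $(0,1,4)$-structure: setting $\lambda_{ij}(a):=\#\{(u,v)\in S_i\times S_j:\ u=av\}$ for $i,j\in\{1,4\}$, we have
\[
M(a)\;=\;\lambda_{11}(a)+4\lambda_{14}(a)+4\lambda_{41}(a)+16\lambda_{44}(a),\qquad \lambda(a)=\sum_{i,j}\lambda_{ij}(a).
\]
To isolate the individual $\lambda_{ij}(a)$ I would impose extra conditions of the form ``$f(x)\in S_4$'' on the points of $\mathcal{C}_a$ --- equivalently, the existence of $x'\neq x$ with $f(x')=f(x)$, a condition already encoded by the auxiliary curves $C_2$ and $C_3$ --- and count the resulting enlarged curve systems by the same geometric techniques. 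A linear combination of the ensuing identities, together with the known sizes of $S_1$ and $S_4$, should pin $\lambda(a)$ down to $2^{m-2}$.

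The main obstacle will be the factorization and point-counting of $\mathcal{C}_a$ for generic $a\neq 1$: the tidy splitting $C_1\cup C_2\cup C_3$ at $a=1$ is special, and for $a\neq 1$ the components may have higher genus and point counts that a priori vary with $a$. The crux of the proof is showing that these dependences cancel so that $M(a)$, and eventually $\lambda(a)$, are constant on $\ftwomstar\setminus\{1\}$. Should the direct geometric route resist, an alternative is the character-theoretic criterion $|\chi(D(f)^*)|^2=2^{m-2}$ for every nontrivial multiplicative character $\chi$ of $\ftwomstar$: the $(0,1,4)$-structure reduces this to controlling the three sums $\sum_{x\in\ftwomstar}\chi(f(x))$, $\sum_{v\in S_1}\chi(v)$, $\sum_{v\in S_4}\chi(v)$, which can in principle be handled by Gauss-sum techniques together with the curve identities already proved en route to Theorem~\ref{Low-degree}.
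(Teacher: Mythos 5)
Your proposal is a plan rather than a proof, and the plan's central step is missing. Everything after the reduction to ``show $\lambda(a)=2^{m-2}$ for all $a\neq 1$'' is conditional: you do not factor the curves $\mathcal{C}_a\colon y^4(x^{10}+x^5+1)=a\,x^4(y^{10}+y^5+1)$ for $a\neq 1$, you do not count their points, and you never construct the ``enlarged curve systems'' that are supposed to isolate the individual $\lambda_{ij}(a)$ from the single quantity $M(a)=\lambda_{11}(a)+4\lambda_{14}(a)+4\lambda_{41}(a)+16\lambda_{44}(a)$ (knowing $M(a)$ alone cannot determine $\lambda(a)=\sum_{i,j}\lambda_{ij}(a)$). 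Worse, the stated crux --- that $M(a)$ is constant on $\ftwomstar\setminus\{1\}$ --- is not only unproven but dubious: even when $\lambda(a)$ is constant, the weighted sum $M(a)=\sum_{v\in D(f)^*}|P_f(av)|\,|P_f(v)|$ depends on how the fibre sizes $1$ and $4$ distribute along the translates, and there is no reason for the $\lambda_{ij}(a)$ to be individually constant. The character-sum fallback is likewise only named, not executed. So as written there is a genuine gap precisely at the point where the difference-set property has to be established.

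The paper avoids all of this with a short complementarity argument, which your outline does not anticipate. By Lemma~\ref{sextic-equation}, $D(g)^*$ for $g(x)=x^6+x$ is a difference set with Singer parameters $(2^m-1,2^{m-1}-1,2^{m-2}-1)$, and by Theorem~\ref{Complementary} its complement in $\ftwomstar$ is a difference set with parameters $(2^m-1,2^{m-1},2^{m-2})$. Since Theorem~\ref{Low-degree} gives $|D(f)^*|=2^{m-1}$, it suffices to prove $D(f)^*\cap D(g)^*=\emptyset$, i.e.\ that $x^{-4}+x^6+x=y^6+y$ has no solution with $x\in\ftwomstar$, $y\in\ftwom$. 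Clearing denominators gives the curve $x^{10}+x^5+1+x^4(y^6+y)=0$, whose defining polynomial is written as $u^2+uv+v^2$ with $u=x^5+x^4y$, $v=x^3y^2+x^2y^3+1$; since it has no finite double point over odd-degree extensions, Lemma~\ref{No-point} shows it has no $\ftwom$-rational point at all (because $t^2+t+1\neq 0$ there). This identifies $D(f)^*$ as exactly the complement of $D(g)^*$ and finishes the proof. If you want to salvage your approach, the realistic fix is not to push the $\mathcal{C}_a$ point counts but to notice, from your own $a=1$ analysis, that $D(f)^*$ should be compared with a known Singer difference set and to prove the disjointness of the two value sets by the quadratic-form trick above.
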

\begin{proof}
Let $g(x)=x^6+x$. Then by Lemma~\ref{sextic-equation},  $D(g)^*$ is a difference set with Singer parameters $(2^m-1,2^{m-1}-1,2^{m-2}-1)$ in $\ftwom^*$. Thus using Theorem~\ref{Complementary} and  the fact that $|D(f)^*|=2^{m-1}$ by the previous theorem, in order to prove that $D(f)^*$ is a difference set in $\ftwomstar$ with Singer parameter $(2^m-1,2^{m-1},2^{m-2})$, it suffices to prove that $D(f)^*\cap D(g)^*=\emptyset$, or equivalently the value-sets of $f$ and $g$ partition $\ftwomstar$. In order to prove this claim, we need to show that the equation
\[
x^{-4}+x^6+x=y^6+y
\]
has no solution $(x,y)$ over $\ftwomstar$ or equivalently the following curve has no non-trivial point over odd-degree extensions of $\ftwo$:
\[
C_1:\;\; x^{10}+x^5+1+x^4(y^6+y)=0.
\]
But we have
\[
x^{10}+x^5+1+x^4(y^6+y)=(x^5+x^4y)^2+(x^5+x^4y)(x^3y^2+x^2y^3+1)+(x^3y^2+x^2y^3+1)^2.
\]
So we can apply Lemma~\ref{No-point} to the curve $C_1$ . Now, it is easy to see that there is no finite double point on $C_1$ over odd-degree extensions of $\ftwo$, and hence there is no point over $\ftwom$.  This finishes the proof .
\end{proof}

    \begin{remark}
    One may wonder that how the curves $D$ and $E$ have been obtained. We have obtained the curve $E$ by blowing up the curve $C_2$ at the origin.
    \end{remark}
    
 \subsection{\boldmath{$x^{-48}+x^{20}+x^3$}}
 \begin{theorem}\label{HIgh-degree}
 	Let $f(x)=x^{-48}+x^{20}+x^3$, and let $m$ be an odd number. Then $f$ induces a $(0,1,4)$-map over $\ftwom$, and furthermore 
 	$|D(f)^*|=2^{m-1}$.
 \end{theorem}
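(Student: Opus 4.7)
The plan is to mirror the proof of Theorem~\ref{Low-degree}. First, I would write out $f(x)=f(y)$ and, after clearing denominators by $x^{48}y^{48}$, obtain the curve
\[
C:\; y^{48}(x^{68}+x^{51}+1)+x^{48}(y^{68}+y^{51}+1)=0
\]
in $\ftwom\times\ftwom$. Since the diagonal $x=y$ lies on $C$, the linear polynomial $x+y$ divides the defining polynomial; I would factor it out to leave a residual curve $C'$ whose nontrivial $\ftwom$-points are exactly the pairs $(a,b)$ with $a\neq b$ and $f(a)=f(b)$. The first step is then to decompose $C'$ over $\ftwo$ into its irreducible components $D_1,\dots,D_\ell$.

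For each $D_j$, I would seek a birational map to a curve of Artin--Schreier form $w^2+w=h_j(t)$, possibly after intermediate substitutions such as $y=xz$, $y=x+u$, or a blow-up at the origin (exactly as with the auxiliary curves $D,E_1,E_2,E_3$ in the proof of Theorem~\ref{Low-degree}). Lemma~\ref{Quad-zeros} will then count the affine $\ftwom$-rational points of each $D_j$ as a trace condition, and summing these yields $\#C'(\ftwom)$. To establish that $f$ is a $(0,1,4)$-map, I would analyze the fiber of $\bigcup_j D_j$ above a fixed $x=a$ and show that the number of $y\in\ftwom\setminus\{a\}$ with $f(y)=f(a)$ is always $0$ or $3$; this should split into trace conditions on polynomial expressions in $a$ together with Lemma~\ref{Cubic-Berlekamp} to handle any cubic substitution that arises, mimicking the ``two-from-$C_3$, one-from-$C_2$'' pattern used for $x^{-4}+x^6+x$.

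The equivalence relation $a\equiv b\iff f(a)=f(b)$ then partitions $\ftwom$ into $r$ singletons and $s$ size-$4$ classes with $r+4s=2^m$. Each size-$4$ class contributes exactly $12$ off-diagonal points to $C$, giving $12s=\#C'(\ftwom)$ up to a small correction for spurious $C'$-points not arising from genuine fibers (compare the subtraction at $(0,0)$ in Theorem~\ref{Low-degree}). Solving should yield $s=(2^{m-1}-1)/3$ and hence $r+s=2^{m-1}+1$; excluding the class of $0$ gives $|D(f)^*|=2^{m-1}$. The principal obstacle is the first step: the defining polynomial of $C$ has bidegree $(68,48)$, vastly larger than the bidegree-$(10,10)$ polynomial in Theorem~\ref{Low-degree}, so locating the correct factorization of $C'$ over $\ftwo$ and engineering Artin--Schreier reductions for each component will be the technically demanding part. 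Once the factorization is in hand, however, the point counting and equivalence-class arithmetic should follow the template of Theorem~\ref{Low-degree} essentially verbatim.
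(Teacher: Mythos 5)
Your high-level template (decompose the curve given by $f(x)=f(y)$, count $\ftwom$-points on the nontrivial components, deduce the $(0,1,4)$ property by a fiber analysis over $x=a$, then run the equivalence-class arithmetic) is exactly the paper's strategy, and your final counting step, as well as your corrected curve $y^{48}(x^{68}+x^{51}+1)+x^{48}(y^{68}+y^{51}+1)=0$, are fine. The gap is in the two middle steps, which you defer as ``engineering'' and claim will then follow Theorem~\ref{Low-degree} essentially verbatim: here they do not, and the specific tools you name would fail. In the paper's decomposition the residual curve splits (beyond $x+y$) into three components $C_2,C_3,C_4$, and none of them is handled by an Artin--Schreier reduction plus Lemma~\ref{Quad-zeros} alone: $C_2$ is taken birationally to $(y^2+y)^6+(y^2+y)^5=x^{17}$ and its count rests on the two-to-one property of $x^6+x^5$ and the trace-of-sum statement of Lemma~\ref{sextic-equation} (Glynn/Maschietti, via Dickson polynomials); $C_3$ is taken to the Bluher-type curve $y^5+x^{17}(y+1)=0$, which is a quintic in $y$, not an Artin--Schreier curve, and is counted with Theorem~\ref{root-counting}; and $C_4$ has \emph{no} $\ftwom$-points at all, which is proved by writing its equation in the form $g^2+gh+h^2$ and invoking Lemma~\ref{No-point}/Corollary~\ref{NoPoint-Cor} (irreducible over $\ftwo$, reducible over $\FF_4$) --- a mechanism absent from your plan.

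Likewise, the $(0,1,4)$ step cannot be carried by Lemma~\ref{Cubic-Berlekamp}. The fiber over $x=a$ requires matching the number of solutions of the sextic $y^6+y^5=a^{17}$ (coming from $C_2$) against the number of roots of the quintic coming from $C_3$, which is precisely Lemma~\ref{Six-Five-Polys}; that lemma is a substantive statement whose own proof needs a further birational reduction and a point count on an auxiliary curve, and there is no cubic around for Berlekamp's criterion to apply to, unlike the $x^5(z^3+z)+1=0$ situation in Theorem~\ref{Low-degree}. So the ingredients you would actually need (the sextic two-to-one results, the Bluher/Helleseth--Kholosha root counts, the quadratic-form no-point criterion, and Lemma~\ref{Six-Five-Polys}) are exactly the part your proposal leaves unspecified, and they constitute most of the proof rather than a routine repetition of the low-degree case.
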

 \begin{proof}
 Similar to the previous case, the proof involves three steps; point-counting step for the related curves, proving that $f$ induces a $(0,1,4)$-map, and finally obtaining the size of the punctured value-set.  Considering the equation $f(x)=f(y)$, we get the curve
 $$
 C:  y^{48}(x^{58}+x^{51}+1)+x^{48}(y^{58}+y^{51}+1)=0
 $$
 which can be written as the union the following four curves:
 \[
 C_1: x+y=0,\;\;
C_2:  x^{12}y^{12}+x^4y^3+x^3y^4+x^7+y^7=0,
 \]

 \[
 C_3: x^{12}y^{12}(x+y)^3+(x^2+xy+y^2)^5=0,
 \]
 and

 \[
 C_4: (x^{20}y^{12}+y^{20}x^{12}+(x^3+y^3)^5)^2+(x^{16}y^{16})^2+x^{16}y^{16}(x^{20}y^{12}+y^{20}x^{12}+(x^3+y^3)^5)=0.
 \]
 So, at the first step of the proof we find the number of affine rational points on the curves $C_2, C_3$ and $C_4$ as follows:
 
 \begin{itemize}
 	\item[(i)]  $\#C_2(\ftwom)$:  Let $D$ be the curve given by $(y^2+y)^6+(y^2+y)^5=x^{17}$, and let $\eta$ be the rational map given by 
 	\begin{eqnarray*}
 		\eta:& C_2&\longrightarrow D\\\nonumber
 		&(x,y)&\mapsto (x,\frac{x}{y}).\nonumber
 	\end{eqnarray*}
 	It is easy to verify that $\eta$ is a birational map with the inverse map $\eta^{-1}$ given by 
 	\begin{eqnarray*}
 		\eta^{-1}:& D&\longrightarrow C_2\\\nonumber
 		&(x,y)&\mapsto (x,\frac{x}{y}).\nonumber
 	\end{eqnarray*}
 	
 	Examining the definitions of $\eta$ and $\eta^{-1}$, we see that $\eta^{-1}\circ\eta$ and $\eta\circ\eta^{-1}$ are identity maps on $C_2-(0,0)$ and $D-(0,0)$, respectively. Thus
 	\[
 	\#C_2(\ftwom)=\#D(\ftwom).
 	\]
 	
 	Now, let $E$ be the curve given by $y^6+y^5=x^{17}$, and $\mu: D\longrightarrow E$ be the map given by
 	\begin{eqnarray*}
 		\mu:& D&\longrightarrow E\\\nonumber
 		&(x,y)&\mapsto (x,y^2+y).\nonumber
 	\end{eqnarray*}
  From the definition of $\mu$, we see that the point $(a,b)$ on $E$ has no preimage on $\ftwom$ whenever $\Tr(b)=1$, and it has exactly two preimages whenever $\Tr(b)=0$. So
  \[
  \#D(\ftwom)=2|\{(a,b)\in E: \Tr(b)=0\}|.
\]
 But, by Lemma~\ref{sextic-equation}, whenever $m$ is an odd number, for half the points $(a,b)$ on $E$ we have $\Tr(b)=0$. Thus 
 \[
 \#D(\ftwom)=\#E(\ftwom).
 \]
 Finally, since the map $x\mapsto x^{17}$ is an automorphism of $\ftwom$, and  $y^6+y^5$ is a two to one map on $\ftwom$ by Lemma~\ref{sextic-equation} , we have   $\#E(\ftwom)=2^m$, and hence 
 \[
  \#C_2(\ftwom)=\#E(\ftwom)=2^m.
 \]
 	\item[(ii)] $\#C_3(\ftwom)$:  Let $E_1$ be the curve given by $x^{17}y^3+(1+y^3)^5=0$, and let $\zeta$ be the rational map given by
 	\begin{eqnarray*}
 		\zeta:& C_3&\longrightarrow E_1\\\nonumber
 		&(x,y)&\longrightarrow (x,\frac{x+y}{y}).\nonumber
 	\end{eqnarray*}
 	It follows that $\zeta$ is a birational map with the inverse map $\zeta^{-1}$ given by
 	\begin{eqnarray*}
 		\zeta^{-1}:& E_1&\longrightarrow C_3\\\nonumber
 		&(x,y)&\mapsto (x,x/(y+1)).\nonumber
 	\end{eqnarray*}
 	Now, from the definition of $\zeta$ and $\zeta^{-1}$, we see that 
 	$\zeta^{-1}\circ \zeta$ and $\zeta\circ \zeta^{-1}$ are identity maps on $C_3-{(0,0)}$ and $E_1-{(0,1)}$, respectively. Thus, considering the fact that the point $(0,0)$ lies on $C_3$ and $(0,1)$ lies on $E_1$, we get $$\#C_3(\ftwom)=\#E_1(\ftwom).$$ So, we need to compute $\#E_1(\ftwom)$. In order to compute $\#E_1(\ftwom)$, we notice that the map $y\mapsto y^3$ is an automorphism of $\ftwom$ whenever $m$ is an odd number, and hence we need to compute the number of affine $\ftwom$-rational points of the curve $E_2: x^{17}y+(y+1)^5=0$ which is birational to $E_3: y^5+x^{17}(y+1)=0$ with the birational map taking $(x,y)$ on $E_2$ to $(x,y+1)$ on $E_3$. But the map $\phi:x\mapsto x^{17}$ is an isomorphism of $\ftwom$ whenever $m$ is an odd number. Thus we can apply Lemma~\ref{root-counting} to count the number of points on $E_3$. Using that lemma there are $2^{m-1}$ values of $x$ for which there is a unique $y$ such that $(x,y)$ is a point on $E_3$, and $\frac{2^{m-1}-1}{3}$ values of $x$ for which there are exactly three values of $y$ such that $(x,y)$ is a point on $E_3$. Hence in total there are $2^{m-1}+3\frac{2^{m-1}-1}{3}=2^m-1$ points on the curve $E_3$. Using this fact and arguments similar to the previous ones we get $$\#C_3(\ftwom)=\#E_1(\ftwom)=\#E_2(\ftwom)+1=\#E_3(\ftwom)+1=2^m.$$
 	
 	\item[(iii)]  $\#C_4(\ftwom)$: Applying Lemma~\ref{No-point}, we need to check that on $C_4$ there is no finite double  point over odd degree extensions of $\ftwo$ which is very easy to verify. So, $\#C_4(\ftwom)=0$ whenever $m$ is an odd number. This completes the first step of the proof in this case.
 \end{itemize}
 	 Similar to the proof of the previous case (change to subsection), in order to prove that $f$ is a $(0,1,4)$- map, we need to prove that for any $a\in\ftwom$, there are either none or in total three points on $C_2$ and $C_3$ with $x$-coordinate equal to $a$ and $y$-coordinate in $\ftwom$. To this end, examining the proof of Case (ii) of the previous step, we see that for any $a$ in $\ftwom$, the total number of points with $x$-coordinate equal to $a$ and $y$-coordinate in $\ftwom$ on $C_2$ is the same as the total number of points with $x$-coordinate equal to $a$ and $y$-coordinate in $\ftwom$ on $E: y^6+y^5=x^{17}$ which is  either two or zero by Lemma~\ref{sextic-equation}. We also see that  the total number of points with $x$-coordinate equal to $a$ and $y$-coordinate in $\ftwom$ on $C_3$ is the same as the total number of points with $x$-coordinate equal to $a$ and $y$-coordinate in $\ftwom$ on $E_2: x^{17}y+(y+1)^5$.  So, we need to prove:
\begin{itemize}
	\item[(a)] for any $a\in \ftwom$, there are exactly two point with $x$-coordinate equal to $a$ on $E$ if and only if there are exactly one points on $E_2$ with $x$-coordinates equal to $a$, and 
	\item[(b)] if there are three points with $x$-coordinate equal to $a$ on $E_2$, then there is none on $E$.
\end{itemize}
But, both of the above claims follow from Lemma~\ref{Six-Five-Polys}. Hence,  $f(x)$ induces a $(0,1,4)$-map over $\ftwom$. This finishes the second step of the proof. 

The proof of the third step is completely similar to the proof of the third step of the proof of Theorem~\ref{Low-degree}. 
\end{proof}

\begin{theorem}
	Let $f(x)=x^{-48}+x^{20}+x^3$, and let $m$ be an odd number. Then $D(f)^*$ is a difference set with Singer parameters $(2^m-1,2^{m-1},2^{m-2})$ in $\ftwomstar$.
\end{theorem}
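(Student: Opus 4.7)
The plan is to mirror the proof of Theorem~\ref{EndCase1}, with $g(y) = y^6 + y^5$ playing the role of the auxiliary sextic. By Lemma~\ref{sextic-equation}(iii) the punctured value-set $D(g)^*$ is a difference set in $\ftwomstar$ with parameters $(2^m-1, 2^{m-1}-1, 2^{m-2}-1)$, so by Theorem~\ref{Complementary}(i) its complement $\ftwomstar \setminus D(g)^*$ is a difference set with the target Singer parameters $(2^m-1, 2^{m-1}, 2^{m-2})$. Since $|D(f)^*| = 2^{m-1} = |\ftwomstar \setminus D(g)^*|$ by Theorem~\ref{HIgh-degree}, it is enough to show that $D(f)^*$ and $D(g)^*$ are disjoint; the two sets then partition $\ftwomstar$, and the equality $D(f)^* = \ftwomstar \setminus D(g)^*$ is forced.

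Disjointness is equivalent to showing that the plane curve $C:F(x,y)=0$, where
\[
F(x,y) = x^{68} + x^{51} + 1 + x^{48}(y^6 + y^5)
\]
arises from $f(x) = g(y)$ by clearing the $x^{-48}$ denominator, has no $\ftwom$-rational affine point for odd $m$. The boundary cases are immediate: $F(0, y) = 1$, and at $y \in \{0, 1\}$ the polynomial $F$ reduces to $x^{68} + x^{51} + 1 = w^4 + w^3 + 1$ with $w = x^{17}$, which is irreducible over $\ftwo$ and therefore has no root in any odd-degree extension of $\ftwo$. For the generic points I will appeal to Lemma~\ref{No-point}; its no-finite-double-point hypothesis is automatic here, because in characteristic two the partial derivatives $F_x = x^{50}$ and $F_y = x^{48} y^4$ vanish simultaneously only on the line $x = 0$, which has already been excluded.

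What remains is to exhibit a decomposition $F = G^2 + GH + H^2$ with $G, H \in \ftwo[x,y]$. The natural seed is the univariate identity
\[
x^{68} + x^{51} + 1 = (x^{34})^2 + x^{34}(x^{17} + 1) + (x^{17} + 1)^2,
\]
which reflects the factorization $w^4 + w^3 + 1 = (w^2 + \zeta w + \zeta)(w^2 + \zeta^2 w + \zeta^2)$ over $\FF_4 = \ftwo(\zeta)$ (with $\zeta^2 + \zeta + 1 = 0$) after substituting $w = x^{17}$. Writing $G = x^{34} + G_1(x, y)$ and $H = x^{17} + 1 + H_1(x, y)$, the task reduces to producing $G_1, H_1 \in \ftwo[x,y]$ satisfying the residual identity
\[
G_1^2 + G_1 H_1 + H_1^2 + x^{34} H_1 + (x^{17} + 1)\, G_1 = x^{48}(y^5 + y^6).
\]
Matching degrees on both sides forces $\deg_y G_1, \deg_y H_1 \le 3$ and $\deg_x G_1, \deg_x H_1 \le 34$, and the leading monomials are further constrained by the right-hand side (for instance, an $x^{24} y^3$ term in $H_1$ would contribute the required $x^{48} y^6$ via $H_1^2$). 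In the same spirit as the $x^a y$, $x^b y^c (x+y)$ ansatz that worked in Theorem~\ref{EndCase1}, I would then solve for $(G_1, H_1)$ by equating coefficients of $y^i$ and working downward from $y^6$, reducing the problem to a finite system of equations among the $x$-polynomial coefficients. The main obstacle is precisely this residual identity: although the shape of $(G_1, H_1)$ is tightly constrained, the larger exponents here generate substantially more interacting cross terms than in Theorem~\ref{EndCase1}, and the bookkeeping---whether completed by hand or by a short computer-algebra verification---is delicate.
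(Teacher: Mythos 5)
Your strategy breaks down at its central claim: $D(f)^*$ is \emph{not} disjoint from $D(y^6+y^5)^*$, so the curve $F(x,y)=x^{68}+x^{51}+1+x^{48}(y^6+y^5)=0$ does have affine $\ftwom$-rational points with $x\neq 0$. Concretely, take $m=5$ and $\FF_{32}=\ftwo[a]/(a^5+a^2+1)$. Reducing exponents modulo $31$ gives $f(x)=x^{14}+x^{20}+x^{3}$ on $\FF_{32}^{*}$, and
\[
f(a^{13})=a^{27}+a^{12}+a^{8}=(a^3+a+1)+(a^3+a^2+a)+(a^3+a^2+1)=a^{3},
\]
while
\[
(a^{23})^{6}+(a^{23})^{5}=a^{14}+a^{22}=(a^4+a^3+a^2+1)+(a^4+a^2+1)=a^{3}.
\]
Hence $a^3\in D(f)^*\cap D(y^6+y^5)^*$ and $(a^{13},a^{23})$ is a rational point on your curve; a full enumeration for $m=5$ shows the two sets in fact share five elements. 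This also kills the algebraic half of your plan: you correctly observe that $F$ has no finite double point (since $F_x=x^{50}$ and $F(0,y)=1$), so if a decomposition $F=G^2+GH+H^2$ over $\ftwo[x,y]$ existed, Lemma~\ref{No-point} would force the curve to be pointless --- contradicting the point above. Therefore the residual identity for $(G_1,H_1)$ that you propose to solve by coefficient matching has \emph{no} solution; the difficulty is not bookkeeping but impossibility. (The same obstruction occurs with $y^6+y$ in place of $y^6+y^5$: already $a^6+a=a^3$ in $\FF_{32}$.)

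The underlying issue is that the complement of $D(f)^*$ is not the value-set of a Segre/Glynn sextic, which is exactly why the paper takes a different route for this trinomial than for $x^{-4}+x^6+x$. The paper proves instead that the \emph{seventeenth power} $(D(f)^*)^{17}$ is the complement in $\ftwomstar$ of the Dillon--Dobbertin difference set $D\bigl((x+1)^{241}+x^{241}+1\bigr)^*$ (Theorem~\ref{Dillon-Dobbertin} with $k=4$, $d=241$), uses part (ii) of Theorem~\ref{Complementary} to transfer the difference-set property through the power map $x\mapsto x^{17}$ (which is a bijection for odd $m$), and establishes the required disjointness by applying Corollary~\ref{NoPoint-Cor} to the curve $(u^4+u+1)^{17}+u^{20}\bigl((y+1)^{241}+y^{241}+1\bigr)=0$, whose irreducibility over $\ftwo$ and reducibility over $\FF_4$ are verified by computer algebra. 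Your size count $|D(f)^*|=2^{m-1}$ and the general complement-plus-disjointness mechanism are sound; the missing ingredient is identifying \emph{which} known difference set is complementary to a suitable power of $D(f)^*$, and the sextic is the wrong candidate.
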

\begin{proof}
	Let $g(x)=(x+1)^{241}+x^{241}+1$. Then by Lemma~\ref{Dillon-Dobbertin},  $D(g)^*$ is a difference set with Singer parameters $(2^m-1,2^{m-1}-1,2^{m-2}-1)$ in $\ftwomstar$. Thus using Theorem~\ref{Complementary} and  the fact that $|D(f)^*|=2^{m-1}$ by the previous theorem, in order to prove that $D(f)^*$ is a difference set in $\ftwomstar$ with Singer parameter $(2^m-1,2^{m-1},2^{m-2})$, it suffices to prove that $(D(f)^*)^{17}\cap D(g)^*=\emptyset$ or equivalently $(D(f)^*)^{17}$ and $ D(g)^*$ partition $\ftwomstar$. In order to prove this claim, we need to show that the equation
	\[
(x^{-48}+x^{20}+x^3)^{17}=(y+1)^{241}+y^{241}+1
	\]
	has no solution $(x,y)$ over $\ftwomstar$ or equivalently as $\psi:x\mapsto 1/x$ is an isomorphism of $\ftwomstar$, the following equation has no solution  $(x,y)$ over $\ftwomstar$
	\[
	(x^{48}+x^{-20}+x^{-3})^{17}=(y+1)^{241}+y^{241}+1.
	\]
	
		\[
	(x^{68}+x^{17}+1)^{17}=x^{340}((y+1)^{241}+y^{241}+1).
	\]
	As the map $x\mapsto x^{17}$ is an isomorphism over odd-degree extensions of $\ftwo$, if we set $u=x^{17}$, then  the latter claim is equivalent to the claim that the following curve has no non-trivial point over the odd-degree extensions of $\ftwo$:
	\[
	C:\;\;(u^4+u+1)^{17}+u^{20}((y+1)^{241}+y^{241}+1)=0.
	\]
	Now, it is easy to see that there is no finite double point on $C$ over odd-degree extensions of $\ftwo$, and furthermore, using MAGMA computer algebra system we see that 
	\[
p(u,y)=(u^4+u+1)^{17}+u^{20}((y+1)^{241}+y^{241}+1)
	\]
is irreducible over $\ftwo$ and reducible over $\FF_4$. Thus applying Corollary~\ref{NoPoint-Cor}, there is no finite point on $C$ over the odd-degree extensions of $\ftwo$. This completes the proof.
\end{proof}

\subsection{\boldmath{$x^{-\frac{\sigma}{2}}+x^{-\frac{\sigma - 1}{2}}+x$}}\

As with the previous cases, in this case we first obtain the size of the corresponding value set and prove that the function $f(x)=x^{-\frac{\sigma}{2}}+x^{-\frac{\sigma - 1}{2}}+x$ is semi-regular.
\begin{theorem}\label{EndCase3}
	Let $m$ be an odd integer, $\sigma=2^{\frac{m+1}{2}}$,  and $f(x)=x^{-\frac{\sigma}{2}}+x^{-\frac{\sigma - 1}{2}}+x$. Then $f$ is a $(0,1,4)$-map on $\ftwomstar$, $(D(f)^{*})^{\sigma+1} = T_1$, and hence $|D(f)^{*}|=2^{m-1}$.
\end{theorem}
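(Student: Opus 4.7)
The plan is to show $(D(f)^*)^{\sigma+1}=T_1$, from which $|D(f)^*|=2^{m-1}$ follows immediately since $(\sigma+1)(\sigma-1)=\sigma^2-1\equiv 1\pmod{2^m-1}$ (using $\sigma^2=2\cdot 2^m\equiv 2$) makes $x\mapsto x^{\sigma+1}$ a bijection of $\ftwomstar$. The inclusion ``$\subseteq$'' reduces to a trace computation on $f(x)^{\sigma+1}$; the reverse inclusion, together with the $(0,1,4)$-property, will come from a curve-counting argument analogous to those in Theorems~\ref{Low-degree} and \ref{HIgh-degree}.

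For the inclusion $(D(f)^*)^{\sigma+1}\subseteq T_1$, I would first simplify $f(x)^{\sigma}$ using $\sigma^2\equiv 2\pmod{2^m-1}$:
\[
f(x)^\sigma = x^{-\sigma^2/2}+x^{-\sigma(\sigma-1)/2}+x^\sigma = x^{-1}+x^{\sigma/2-1}+x^\sigma.
\]
Setting $u=x^{1/2}\in\ftwomstar$ and expanding $f(x)\cdot f(x)^\sigma$, the two resulting copies of $u^\sigma$ cancel in characteristic $2$ and one obtains
\[
f(x)^{\sigma+1}=1+(u^{-1}+u^{-2})+(u^{\sigma+1}+u^{2\sigma+2})+(u^{-\sigma-1}+u^{-\sigma-2}).
\]
The first two paired sums are of the form $A+A^2$, so their traces vanish. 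For the third pair I would use the congruence $(\sigma+2)\cdot 2^{s-1}=2^m+2^s\equiv \sigma+1\pmod{2^m-1}$ (where $s=(m+1)/2$), which together with Frobenius-invariance $\Tr(z)=\Tr(z^{2^{s-1}})$ gives $\Tr(u^{-\sigma-2})=\Tr(u^{-\sigma-1})$, so this pair also sums to trace $0$. Since $m$ is odd, $\Tr(1)=1$, and hence $\Tr(f(x)^{\sigma+1})=1$, proving the inclusion. A useful by-product is that $f$ has no zero in $\ftwomstar$, since otherwise $\Tr(0)=0\ne 1$.

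For the reverse inclusion and the $(0,1,4)$-property, I would rewrite $f(x)=H(u)/u^\sigma$ with $H(u)=1+u+u^{\sigma+2}$, so that the equation $f(x)=f(z)$ (with $w=z^{1/2}$) becomes the curve
\[
C:\; u^\sigma H(w)+w^\sigma H(u)=0.
\]
The diagonal factor $(u+w)$ divides $C$; following the template of Theorems~\ref{Low-degree} and \ref{HIgh-degree}, the remaining component should be decomposed via birational maps into pieces of standard types, namely Artin--Schreier curves $y^2+y=R(x)$ (handled by Lemma~\ref{Quad-zeros}) and/or Bluher trinomials $x^{\sigma+1}+ax+b$ (handled by Theorem~\ref{root-counting}). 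Simultaneously I would verify that for each $u\in\ftwomstar$ the non-trivial component contains either $0$ or $3$ points with first coordinate $u$, yielding the $(0,1,4)$-property, and that the total count satisfies $\#C-(2^m-1)=12N_4=2^{m+1}-4$, whence $N_4=(2^{m-1}-1)/3$ and $|D(f)^*|=2^m-1-3N_4=2^{m-1}$.

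I expect the main obstacle to be the decomposition of $C$ after removing the diagonal, since the residual polynomial has total degree $2\sigma+2=2^{(m+3)/2}+2$, exponentially large in $m$ and substantially higher than in the two preceding subsections. Finding the right sequence of substitutions---most likely $w=ur$ with $r\neq 1$, combined with an exponent substitution exploiting $\gcd(\sigma-1,2^m-1)=1$---that exposes a clean split into the above standard pieces will require more delicate manipulation than was needed for the rational functions $x^{-4}+x^6+x$ and $x^{-48}+x^{20}+x^3$.
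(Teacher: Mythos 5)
Your trace computation for the forward inclusion is correct: with $u=x^{1/2}$ the identity $f(x)^{\sigma+1}=1+(u^{-1}+u^{-2})+(u^{\sigma+1}+u^{2\sigma+2})+(u^{-\sigma-1}+u^{-\sigma-2})$ holds, the first two pairs are of the form $A+A^2$, and your congruence $(\sigma+2)\cdot 2^{(m-1)/2}\equiv\sigma+1\pmod{2^m-1}$ indeed gives $\Tr(u^{-\sigma-2})=\Tr(u^{-\sigma-1})$, so $\Tr\bigl(f(x)^{\sigma+1}\bigr)=\Tr(1)=1$ and $(D(f)^{*})^{\sigma+1}\subseteq T_1$. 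This is in substance the paper's Lemma~\ref{equalities}(i) combined with Lemma~\ref{gLemma}(i), just phrased without the auxiliary maps $g$ and $h$.

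The rest, however, is only a plan, and it is precisely the content of the theorem. For the $(0,1,4)$-property (and hence, by injectivity of $x\mapsto x^{\sigma+1}$ and $|T_1|=2^{m-1}$, the equality with $T_1$) you propose to remove the diagonal from the curve $u^{\sigma}H(w)+w^{\sigma}H(u)=0$ and decompose the residue birationally into Artin--Schreier and Bluher pieces, but you do not exhibit any such decomposition, and the analogy with Theorems~\ref{Low-degree} and~\ref{HIgh-degree} breaks down here: the residual polynomial has degree $2\sigma+2=2^{(m+3)/2}+2$, so it is a different curve for every $m$ and there is no fixed factorization to compute; the counts you quote ($12N_4=2^{m+1}-4$, $N_4=(2^{m-1}-1)/3$) are consequences of the statement, not inputs to a proof. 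The paper avoids growing-degree curves altogether: it proves $f(x)^{\sigma+1}=g\bigl(h(x^{-\frac{\sigma+1}{2}})\bigr)+1$ with $g(x)=x^{\sigma}+x$ and $h(x)=x+x^{-1}+x^{-\sigma}+x^{\sigma-1}+x^{2-\sigma}$, so that the fibers of $f$ are controlled by the fibers of $h$ through the two-to-one map $g$; Lemmas~\ref{HK_0-equality}--\ref{QK_1-equality} then transport $|P_h(a)|$ to the root count of the Bluher-type trinomial $p(x)=x^{\sigma+1}+x$, whose distribution is known (Theorem~\ref{HKLemma}), yielding Theorem~\ref{hLemma}: $h$ is a $(0,1,3)$-map with $H_1=T_1$. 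That last fact gives simultaneously $|P_f(a)|=|P_h(\alpha)|+|P_h(\alpha+1)|\in\{1,4\}$ when $\Tr(a^{\sigma+1})=1$ and the surjectivity onto $T_1$. Any fiber-by-fiber version of your plan would in effect have to reproduce this reduction of the fibers of $f$ to equations such as $x^{\sigma+1}+(b+1)x^{\sigma}+bx+1=0$ and then invoke Theorem~\ref{root-counting}/\ref{HKLemma}; that reduction is the missing bulk of the argument, so as it stands the proposal has a genuine gap.
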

The proof of  the above theorem is completely different from the proofs of Theorems~\ref{Low-degree} and ~\ref{HIgh-degree} in the previous cases. In order to prove the above theorem, we need some preparations. First, we start with some notations specific to this and the next subsections.

\begin{notation}\label{DefinitionOfFunctions}
Let $m$ be an odd integer,  and $\sigma=2^{\frac{m+1}{2}}$. Then we let:
\begin{itemize}
\item $g(x)= x^{\sigma} + x$,
\item $h(x) = x + x^{-1} + x^{-\sigma} + x^{\sigma -1} + x^{2 - \sigma}$,
\item $p(x)=x^{\sigma+1}+x$,
\item $R(x)=x^{\sigma+1}+x^{\sigma-1}+x$, and 
\item $Q(x)=\frac{x^{\sigma}+x^2+1}{x^{\sigma+1}}$.
\end{itemize}

\end{notation}

\begin{remark}\label{QR-Remark}
  Notice that $p(x)$ is the same $p(x)$ defined in Theorem \ref{HKLemma} for $k=\frac{m+1}{2}$.  Also, $R$ and $Q$ are respectively the same as $R_{k,k^{'}}$ and $Q_{k,k^{'}}$ introduced in \ref{DefinitionOfR} and \ref{DefinitionOfQ} for $k=\frac{m+1}{2}$ and $k^{'}=2$ which are multiplicative inverse of each other modulo $m$ . 
\end{remark}

Considering the above remark, Theorem~\ref{HKLemma-0} can be rewritten as the follows.
\begin{theorem}\label{HKLemma}
	Let $\sigma=2^{\frac{m+1}{2}}$, $p(x)=x^{\sigma+1}+x$, and for each $i$, $P_i$ denote the set of all $a \ne 0$ in $\ftwom$ for which $|P_p(a)|=i$.  Then the function $p(x)$ is a $(0,1,3)$-map and we have:
	
	\begin{itemize}
		\item[(i)] $|P_0|=\frac{2^{m}+1}{3}$ and for all $a \in P_0$ we have $\Tr (R(a^{-1}))=1$.
		
		\item[(ii)] $|P_1|=2^{m-1}$ and $a \in P_1$ if and only if $\Tr (R(a^{-1}))=0$.
		
		\item[(iii)] $|P_3|=\frac{2^{m-1}-1}{3}$ and for all $a \in P_3$ we have $\Tr (R(a^{-1}))=1$.
	\end{itemize}
	where $R(x)=x^{\sigma+1}+x^{\sigma-1}+x$.
\end{theorem}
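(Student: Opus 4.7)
The plan is to derive Theorem~\ref{HKLemma} as a direct specialization of Theorem~\ref{HKLemma-0} with the choice $k=\frac{m+1}{2}$, i.e.\ $2^k=\sigma$. Since the hard content — the partition of $\ftwomstar$ into $P_0,P_1,P_3$ and the characterization by the trace of $R_{k,k'}(a^{-1})$ — is already provided by Theorem~\ref{HKLemma-0}, the task reduces to a verification of hypotheses and of the explicit form of $R_{k,k'}$ in this case.

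First I would verify the two arithmetic hypotheses. Since $m$ is odd, $k=(m+1)/2$ is an integer, and $2k=m+1\equiv 1\pmod{m}$ gives both $\gcd(k,m)=1$ and that $k'=2$ is the multiplicative inverse of $k$ modulo $m$, so Theorem~\ref{HKLemma-0} applies with these values. Next, I would specialize the recursive definition \eqref{DefinitionOfR}: with $k'=2$,
\[
R_{k,k'}(x)=A_1(x)+A_2(x)+B_2(x)=x+x^{2^k+1}+x^{2^k-1}=x^{\sigma+1}+x^{\sigma-1}+x,
\]
which is exactly the function $R(x)$ of Notation~\ref{DefinitionOfFunctions}. (As a sanity check on Remark~\ref{QR-Remark}, the even-$k'$ branch of \eqref{DefinitionOfQ} gives $Q_{k,k'}(x)=(1+x^{\sigma}+x^{\sigma^2})/x^{\sigma+1}$, and since $\sigma^2=2^{m+1}\equiv 2\pmod{2^m-1}$ we have $x^{\sigma^2}=x^2$ for $x\in\ftwomstar$, recovering $Q(x)$.)

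With these identifications in place, Theorem~\ref{HKLemma-0} applied to $p(x)=x^{\sigma+1}+x$ yields all three statements of Theorem~\ref{HKLemma} verbatim: the cardinalities $|P_0|=(2^m+1)/3$, $|P_1|=2^{m-1}$, $|P_3|=(2^{m-1}-1)/3$, and the trace characterization of each $P_i$ in terms of $R(a^{-1})$. No further obstacle arises, since the only thing to check is the bookkeeping translating the general statement into the notation of this subsection; the genuine combinatorial content rests entirely in the earlier results of Bluher and Helleseth--Kholosha cited before Theorem~\ref{HKLemma-0}.
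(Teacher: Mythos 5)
Your proposal is correct and is exactly the route the paper takes: the paper obtains Theorem~\ref{HKLemma} by specializing Theorem~\ref{HKLemma-0} to $k=\frac{m+1}{2}$, $k'=2$ (as recorded in Remark~\ref{QR-Remark}), and your verification that $\gcd(k,m)=1$, that $k'=2$ is the inverse of $k$ modulo $m$, and that $R_{k,2}(x)=x^{\sigma+1}+x^{\sigma-1}+x$ coincides with $R(x)$ is precisely the bookkeeping the paper leaves implicit.
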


We have the following lemma about $g(x)$.

\begin{lemma}\label{gLemma}
Let $g(x)$ be as introduced above, and  let $x\in\ftwom$. Then:
\begin{itemize}
 \item[(i)]  $\Tr (g(x))=0$, 
 \item[(ii)] $g(g(x))=x^2+x$, 
 \item[(iii)] $g(x)$ induces a two to one map over $\ftwom$, and 
 \item[(iv)] for $a\in\ftwom$, $|P_{g}(a)|= 2$ if and only if $\Tr (a)=0$ if and only if there is a unique $\alpha \in \ftwom$ such that $\Tr (\alpha)=0$ and $P_{g}(a)=\{\alpha, \alpha +1\}$.
\end{itemize}
	\end{lemma}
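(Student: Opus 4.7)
The plan is to exploit the fact that $g(x)=x^{\sigma}+x$ is $\ftwo$-linear (since $\sigma$ is a power of $2$) and then to analyze its kernel and its image using the trace.

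First I would prove (i) and (ii) by direct computation. For (i), since the absolute trace is invariant under the Frobenius, $\Tr(x^{\sigma})=\Tr(x)$, so $\Tr(g(x))=\Tr(x^{\sigma})+\Tr(x)=0$. For (ii), I would expand
\begin{equation*}
g(g(x))=(x^{\sigma}+x)^{\sigma}+(x^{\sigma}+x)=x^{\sigma^{2}}+x^{\sigma}+x^{\sigma}+x=x^{\sigma^{2}}+x,
\end{equation*}
and then use $\sigma^{2}=2^{m+1}$ together with $x^{2^{m}}=x$ on $\ftwom$ to get $x^{\sigma^{2}}=x^{2}$, yielding $g(g(x))=x^{2}+x$.

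Next I would prove (iii) by studying the kernel of the $\ftwo$-linear map $g$. An element $x\in\ftwom$ lies in $\ker g$ iff $x^{\sigma}=x$, i.e.\ iff $x\in \FF_{2^{(m+1)/2}}\cap\ftwom=\FF_{2^{d}}$ where $d=\gcd((m+1)/2,m)$. Writing $m=2\cdot\tfrac{m+1}{2}-1$ shows $d=1$, so $\ker g=\ftwo=\{0,1\}$. By rank--nullity the image of $g$ has size $2^{m-1}$, and therefore $g$ is a $2$--to--$1$ map on $\ftwom$.

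Finally, for (iv), I would use (i) together with the above to identify the image. By (i) the image is contained in the set of trace-zero elements, which also has size $2^{m-1}$, so in fact
\begin{equation*}
\operatorname{Im}(g)=\{a\in\ftwom:\Tr(a)=0\}.
\end{equation*}
Thus $|P_{g}(a)|=2$ iff $\Tr(a)=0$. When $\Tr(a)=0$ and $g(\alpha)=a$, linearity together with $\ker g=\{0,1\}$ gives $P_{g}(a)=\{\alpha,\alpha+1\}$. Since $m$ is odd, $\Tr(1)=1$, hence $\Tr(\alpha)+\Tr(\alpha+1)=1$, so exactly one of the two preimages has trace zero, establishing uniqueness of $\alpha$. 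No step looks genuinely difficult; the only mild subtlety is the gcd computation that forces $\ker g=\ftwo$, on which the whole argument hinges.
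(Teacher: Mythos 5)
Your proposal is correct. It differs from the paper mainly in how part (iii) (and the structure of the fibers used in (iv)) is obtained: you treat $g(x)=x^{\sigma}+x$ as an $\ftwo$-linear map, compute $\ker g=\FF_{2^{(m+1)/2}}\cap\ftwom=\ftwo$ via the gcd computation $\gcd\bigl(\tfrac{m+1}{2},m\bigr)=1$, and conclude two-to-one-ness by rank--nullity, with the fibers automatically being the cosets $\{\alpha,\alpha+1\}$ of the kernel. The paper instead derives the same facts from the identity of part (ii): if $g(\alpha)=g(\beta)$ then $g(g(\alpha))=g(g(\beta))$, so $\alpha^{2}+\alpha=\beta^{2}+\beta$ and hence $\alpha+\beta\in\{0,1\}$, which together with $g(x)=g(x+1)$ gives exactly two preimages; no subfield or gcd argument is needed. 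Both routes then finish (iv) identically, by comparing the size $2^{m-1}$ of the image with the size of the trace-zero hyperplane (using (i)) and noting that $\Tr(1)=1$ for odd $m$ singles out a unique trace-zero element in each fiber. Your linear-algebra argument is slightly more structural and makes the coset description of the fibers explicit from the start; the paper's argument is more self-contained in that it recycles (ii) and avoids the kernel computation, but the two are of essentially equal length and rigor.
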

	
\begin{proof}
	To prove (i), we notice that $\Tr (x^\sigma) =\Tr(x)$ and hence
 \[\Tr (g(x))=\Tr (x^\sigma) + \Tr(x)=0.\]
 To prove (ii), as $x^{2^m}=x$, we have $(x^{\sigma})^{\sigma}=x^{2^{m+1}}=(x^{2^m})^2=x^2$ and hence 
 \[g(g(x))=(x^{\sigma}+x)^{\sigma}+(x^{\sigma}+x)=x^2+x^{\sigma}+x^{\sigma}+x=x^2+x.\]
 To prove (iii), first we notice that if $g(\alpha)=g(\beta)$ for some $\alpha,\beta$ in $\ftwom$, then $g(g(\alpha))=g(g(\beta))$ which is using (ii) is equivalent to $\alpha^2+\alpha=\beta^2+\beta$ implying that $\alpha+\beta=0$ or 1. Thus  each element of the value-set of $g(x)$ has at most to preimages.  As $g(x)=g(x+1)$,  we deduce that indeed $g(x)$ induces a two to one map over $\ftwom$. To prove (iv), let $a\in\ftwom$ and $|P_{g}(a)|= 2$.  Then $a=g(\beta)$ for some $\beta\in\ftwom$ which using (i) implies that $\Tr(a)=0$.  Furthermore,  using (i) and (iii), since half the elements of $\ftwom$ have trace equal to zero, the value-set of $g(x)$ is the set of the elements of $\ftwom$ which are of trace zero. Thus if $\Tr(a)=0$, then $P_{g}(a)=\{\alpha, \alpha +1\}$ for some $\alpha\in\ftwom$. But as $m$ is an odd number, $\Tr(1)=1$ and one of $\alpha$ and $\alpha+1$ is of trace zero and the other one is of trace one. The rest of the proof of (iv) is trivial.
\end{proof}

Next, we have the following functional equations.

\begin{lemma}\label{equalities}
For each nonzero $x \in \ftwom$ we have:

\begin{itemize}

   \item[(i)] $f(x)^{\sigma + 1} = g(h(x^{-\frac{\sigma +1}{2}})) + 1$, 
   
   \item[(ii)] $R(Q(x)^{-1})=x$ whenever $x\neq 0$,
   
   \item[(iii)] $h(x) = g(x^{-1} + x^{\sigma -1}) + x$, and
   
   \item[(iv)] $\Tr(h(x))=\Tr(x)$ for each $x\neq 0$.
   
\end{itemize}
\end{lemma}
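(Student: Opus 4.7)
All four identities live in the Laurent ring $\ftwom[x,x^{-1}]$ and follow from the single arithmetic relation $\sigma^2\equiv 2\pmod{2^m-1}$, equivalently $x^{\sigma^2}=x^2$ for $x\in\ftwom$, together with char--$2$ Frobenius linearity. I would dispose of (iii) and (iv) first because they are one-line computations, then attack (i) and (ii), which are the real work. For (iii), expand
\[
g(x^{-1}+x^{\sigma-1})=(x^{-1}+x^{\sigma-1})^{\sigma}+(x^{-1}+x^{\sigma-1})=x^{-\sigma}+x^{\sigma^2-\sigma}+x^{-1}+x^{\sigma-1},
\]
reduce $x^{\sigma^2-\sigma}$ to $x^{2-\sigma}$ via $\sigma^2\equiv 2$, and add $x$ to obtain $h(x)$. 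Then (iv) is immediate: applying $\Tr$ to (iii) and using Lemma~\ref{gLemma}(i), which says $\Tr(g(\,\cdot\,))=0$, gives $\Tr(h(x))=\Tr(x)$.

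\textbf{Strategy for (i).} Set $z=x^{-(\sigma+1)/2}$. Using $\sigma^2\equiv 2$ and $\sigma^2/2\equiv 1\pmod{2^m-1}$, I would first reduce each of the five powers of $z$ appearing in $h(z)$ to a power of $x$, producing the exponents $-(\sigma+1)/2,\ (\sigma+1)/2,\ 1+\sigma/2,\ -1/2,\ -\sigma/2$. Applying the Frobenius $\sigma$-power once more gives $h(z)^{\sigma}$ as five further powers of $x$; after char--$2$ cancellations, $g(h(z))=h(z)^{\sigma}+h(z)$ collapses to exactly six surviving monomials. Independently, I would expand $f(x)^{\sigma+1}=f(x)^{\sigma}\cdot f(x)$ using $f(x)^{\sigma}=x^{-1}+x^{\sigma/2-1}+x^{\sigma}$; the nine-term product splits off a constant $1$, contains two pairs of equal monomials that cancel, and leaves precisely the same six monomials as $g(h(z))$. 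A term-by-term match then closes the argument.

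\textbf{Strategy for (ii).} The key observation is the char--$2$ factorization $x^{\sigma}+x^{2}+1=(x^{\sigma/2}+x+1)^{2}$. Writing $E=x^{\sigma/2}+x+1$, one has $Q(x)^{-1}=x^{\sigma+1}/E^{2}$ and, after Frobenius, $E^{\sigma}=x^{\sigma}+x+1$. For $y=Q(x)^{-1}$ I would then compute $y$, $y^{\sigma-1}=y^{\sigma}/y$, and $y^{\sigma+1}=y\cdot y^{\sigma}$ in closed form, and put $R(y)$ over the common denominator $(EE^{\sigma})^{2}$. After clearing denominators and dividing by $x$, the required identity $R(y)=x$ reduces to the polynomial equality
\[
x^{2\sigma+2}+E^{4}+x^{\sigma}(E^{\sigma})^{2}=E^{2}(E^{\sigma})^{2},
\]
which is verified by a direct expansion in which all monomials pair off in characteristic $2$.

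\textbf{Main obstacle.} The hardest part will be the exponent bookkeeping in (i): because $f(x)^{\sigma+1}$ expands to nine Laurent monomials with half-integer exponents that must be interpreted modulo $2^{m}-1$, one has to carefully track all of them against the ten monomials of $g(h(z))$ and verify every char--$2$ cancellation. No deeper ingredient beyond $\sigma^{2}\equiv 2\pmod{2^{m}-1}$ seems to be needed; (ii) is technically lighter but relies on spotting the factorization $x^{\sigma}+x^{2}+1=(x^{\sigma/2}+x+1)^{2}$ that makes the denominators manageable.
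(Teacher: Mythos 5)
Your proofs of (iii) and (iv) coincide with the paper's: (iii) is the same one-line reduction via $\sigma^{2}\equiv 2\pmod{2^{m}-1}$, and (iv) follows by applying $\Tr$ and Lemma~\ref{gLemma}(i). For (i) you and the paper perform essentially the same computation, organized differently: the paper expands $f(x)^{\sigma+1}=f(x^{\sigma})f(x)$, groups the surviving terms into pairs $u+u^{2}$ and $u+u^{\sigma}$, rewrites $u+u^{2}=g(u+u^{\sigma})$ and uses linearity of $g$ to pull everything inside one $g(\cdot)$, whereas you expand both $f(x)^{\sigma+1}$ and $g(h(z))=h(z)^{\sigma}+h(z)$ with $z=x^{-\frac{\sigma+1}{2}}$ into monomials and match; both routes work, but note one bookkeeping slip in your plan: in the nine-term product only one pair of equal monomials (the two copies of $x^{\sigma/2}$) cancels, leaving the constant $1$ plus six monomials — with your claimed two cancelling pairs the count $9-1-4=4$ would not match the six monomials of $g(h(z))$. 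The genuine divergence is (ii): the paper simply quotes Theorem~8 of \cite{Dillon-Dobbertin}, while you give a self-contained verification built on the factorization $x^{\sigma}+x^{2}+1=(x^{\sigma/2}+x+1)^{2}$. Your reduction is correct: with $E=x^{\sigma/2}+x+1$ and $E^{\sigma}=x^{\sigma}+x+1$, clearing the denominator $E^{2}(E^{\sigma})^{2}$ and dividing by $x$ turns $R(Q(x)^{-1})=x$ into
\[
x^{2\sigma+2}+E^{4}+x^{\sigma}(E^{\sigma})^{2}=E^{2}(E^{\sigma})^{2},
\]
and both sides indeed expand to $x^{3\sigma}+x^{2\sigma+2}+x^{2\sigma}+x^{\sigma+2}+x^{\sigma}+x^{4}+1$ in characteristic $2$. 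This buys independence from the Dillon--Dobbertin reference for this identity, at the cost of one extra observation you should make explicit: $Q(x)^{-1}$ is defined because $E\neq 0$ for every $x\in\ftwom$ with $m$ odd (if $E=0$ then $x^{\sigma/2}+x=1$, contradicting $\Tr(x^{\sigma/2}+x)=0\neq 1=\Tr(1)$).
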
  

\begin{proof}
\begin{itemize}\itemindent -20pt
\item[(i)] Since $\sigma^{2}\equiv 2  \pmod{2^m-1}$, $x^{2^m}= x$ and $g(g(x))=x^2+x$ for every $x\in\ftwom$, we have:
\begin{align*}
     f(x)^{\sigma +1}&= f(x)^{\sigma}f(x)= f(x^{\sigma})f(x)\\ 
                              &=(x^{-1} + x^{\frac{\sigma}{2}-1} + x^{\sigma})(x^{-\frac{\sigma}{2}} + x^{-\frac{\sigma -1}{2}} + x) \\
                              &=(x^{-\frac{1}{2}} + x^{-1}) + (x^{\frac{\sigma +1}{2}} + x^{\sigma +1}) + (x^{-\frac{\sigma +1}{2}} + x^{-\frac{\sigma}{2} -1}) + 1\\
                              &=(x^{-\frac{1}{2}} + (x^{-\frac{1}{2}})^2) + (x^{\frac{\sigma +1}{2}} +(x^{\frac{\sigma +1}{2}})^2) + (x^{-\frac{\sigma +1}{2}} + (x^{-\frac{\sigma +1}{2}})^{\sigma}) + 1\\
                              &=g(x^{-\frac{1}{2}}) +g(x^{\frac{\sigma +1}{2}}) +g( x^{-\frac{\sigma +1}{2}}) +1\\
                              &=g(x^{-\frac{1}{2}} + (x^{-\frac{1}{2}})^{\sigma}) + g(x^{\frac{\sigma +1}{2}} +(x^{\frac{\sigma +1}{2}})^{\sigma} )+ g(x^{-\frac{\sigma +1}{2}})+1.
\end{align*}
But  $g(x)$ is a linearized polynomial. Thus 
\begin{align*}
f(x)^{\sigma +1}&=g((x^{-\frac{1}{2}} + (x^{-\frac{1}{2}})^{\sigma}) + (x^{\frac{\sigma +1}{2}} +(x^{\frac{\sigma +1}{2}})^{\sigma}) + x^{-\frac{\sigma +1}{2}}) +1\\
                         &=g(x^{-\frac{1}{2}} + x^{-\frac{\sigma}{2}} + x^{\frac{\sigma +1}{2}} +x^{\frac{\sigma}{2} +1} + x^{-\frac{\sigma +1}{2}}) +1\\
                         &=g(h(x^{-\frac{\sigma +1}{2}})) + 1.
\end{align*}

\indent\item[(ii)] This part is a special case of Theorem 8 of \cite{Dillon-Dobbertin}.

\item[(iii)] For each $x \ne 0$, using the fact that $\sigma^{2}\equiv 2  \pmod{2^m-1}$, we have:
\begin{align*}
h(x)&=(x^{-\sigma}+x^{-1})+(x^{2-\sigma}+x^{\sigma -1})+x\\
      &=(x^{-\sigma}+x^{-1})+(x^{\sigma(\sigma -1)}+x^{\sigma -1})+x\\           
      &=g(x^{-1} + x^{\sigma -1}) + x.                   
\end{align*}

\item[(iv)] By Part (iii), we have:
\begin{equation}\nonumber
	h(x) = g(x^{-1} + x^{\sigma -1}) + x,
\end{equation}
and therefore by Part (i) of Lemma \ref{gLemma}, 
\begin{equation}\label{EQforh}
	\Tr (h(x)) = \Tr(g(x^{-1} + x^{\sigma -1}) + x)=\Tr(g(x^{-1} + x^{\sigma -1}))+\Tr (x)=\Tr(x).
\end{equation}
\end{itemize}
\end{proof}

\begin{lemma}\label{HK_0-equality}
Let $m\ge 3$, and $a\neq 0$ be an element of $\ftwom$ such that $\Tr(a)=0$. Furthermore, for every $x\in\ftwom$ different from $0$ and $1$ let $k_0(x)=g(x)^{-1}+g(x)^{\sigma -1}+x$. Then
	\[
	|P_h(a)|=|P_{k_0}(b)|
	\]
	where $a=g(b)$.
\end{lemma}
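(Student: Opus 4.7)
The plan is to construct an explicit bijection between $P_{k_0}(b)$ and $P_h(a)$. Setting $u(t):=t^{-1}+t^{\sigma-1}$, Lemma~\ref{equalities}(iii) reads $h(x)=g(u(x))+x$, while a direct substitution from the definition gives
\[
k_0(x) \;=\; u(g(x))+x.
\]
Thus $h$ and $k_0$ are built from the same two pieces $g$ and $u$, composed in opposite orders; this duality is exactly what makes the bijection possible.

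Define $\phi\colon P_{k_0}(b)\to\ftwom$ by $\phi(y):=g(y)$. If $k_0(y)=b$, rewrite this as $u(g(y))=b+y$, apply the additive linearized polynomial $g$ to both sides to get $g(u(g(y)))=g(b)+g(y)=a+g(y)$, and conclude
\[
h(g(y)) \;=\; g(u(g(y)))+g(y) \;=\; a,
\]
so $\phi(y)\in P_h(a)$.

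For the inverse, define $\psi\colon P_h(a)\to\ftwom$ by $\psi(x):=u(x)+b$. From $h(x)=a$ one has $g(u(x))=a+x$, whence $g(\psi(x))=g(u(x))+g(b)=(a+x)+a=x$. Therefore
\[
k_0(\psi(x)) \;=\; u(g(\psi(x)))+\psi(x) \;=\; u(x)+(u(x)+b) \;=\; b,
\]
so $\psi$ lands in $P_{k_0}(b)$. The identities $\phi(\psi(x))=g(\psi(x))=x$ and $\psi(\phi(y))=u(g(y))+b=(b+y)+b=y$ then show $\phi$ and $\psi$ are mutual inverses, giving $|P_h(a)|=|P_{k_0}(b)|$.

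The only thing that needs care is that every expression be well defined. On the $k_0$-side, the assumption $y\notin\{0,1\}$ already gives $g(y)\neq 0$, so $\phi(y)$ lies in the domain of $h$. On the $h$-side, $x\neq 0$ by definition of $P_h(a)$, so $u(x)$ and hence $\psi(x)$ are defined; and $g(\psi(x))=x\neq 0$ forces $\psi(x)\notin g^{-1}(0)=\{0,1\}$, placing $\psi(x)$ in the domain of $k_0$. The hypothesis $\Tr(a)=0$ is used only to guarantee, via Lemma~\ref{gLemma}(iv), the existence of some $b$ with $g(b)=a$. The main conceptual step is spotting the $g\leftrightarrow u$ symmetry between $h$ and $k_0$; once that is noticed, everything else is mechanical.
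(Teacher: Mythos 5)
Your proof is correct. You use the same forward map as the paper (restricting $g$ to $P_{k_0}(b)$, justified by the identity $h(g(y))=g(k_0(y))$, which is exactly your $g\leftrightarrow u$ duality), but you diverge in how the bijection is completed. The paper proves injectivity by recovering $y=b+g(y)^{-1}+g(y)^{\sigma-1}$ and proves surjectivity indirectly: given $x_0\in P_h(a)$ it invokes Lemma~\ref{equalities}(iv) to get $\Tr(x_0)=\Tr(a)=0$, then Lemma~\ref{gLemma}(iv) to write $x_0=g(y_0)=g(y_0+1)$, and finally uses $k_0(t+1)=k_0(t)+1$ together with $g(k_0(t))=g(b)$ to conclude that one of $y_0,y_0+1$ lies in $P_{k_0}(b)$; it also disposes of the empty-fiber cases separately. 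You instead exhibit the inverse explicitly, $\psi(x)=x^{-1}+x^{\sigma-1}+b$, and verify $g(\psi(x))=x$ and $k_0(\psi(x))=b$ directly. This buys you three things: you never need the trace identity $\Tr(h(x))=\Tr(x)$, you never need the $2$-to-$1$ fiber structure of $g$ beyond $\ker g=\{0,1\}$ (used only for well-definedness of the domain of $k_0$), and the equality of cardinalities holds uniformly, with no separate treatment of empty sets. Your well-definedness checks ($g(y)\neq 0$ for $y\notin\{0,1\}$, and $g(\psi(x))=x\neq 0$ forcing $\psi(x)\notin\{0,1\}$) are exactly the points that needed attention, and they are handled correctly.
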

\begin{proof}
Notice that $k_0(x)$ is well-defined as $g(x) \ne 0$ for each $x \ne 0,1$ in $\ftwom$, and the existence of $b$ is guaranteed by Part (iv) of Lemma~\ref{gLemma}. First, we prove the claim when both of the sets $P_h(a)$ and $P_{k_0}(b)$ are non-empty. In this case, the claim  would follow if we prove that 
$l_0=g|_{P_{k_0}(b)}$ is a bijection between the two sets $P_h(a)$ and $P_{k_0}(b)$.
In order to see that $l_0$ is well-defined, we notice that by Part~(iii) of Lemma ~\ref{equalities} and  the fact that $g(x)$ is a linearized polynomial, we have
\begin{equation}\label{hK_0-equation}
	g(k_0(x))=g(g(x)^{-1}+g(x)^{\sigma-1})+g(x)=h(g(x)).
\end{equation}
Thus, if $x_0 \in P_{k_0}(b)$, then
\[h(g(x_0))=g(k_0(x_0))=g(b)=a,\] 
and therefore $g(x_0) \in P_h(a)$. In order to prove that $l_0$ is an injection,
let $l_0(u)=l_0(v)$ and hence  $g(u)=g(v)$ for $u,v \in P_{k_0}(b)$. Then by the definition of $k_0$ and the fact that $k_0(u)=k_0(v)=b$ we get:
\[u=k_0(u)+g(u)^{-1}+g(u)^{\sigma-1}=b+g(u)^{-1}+g(u)^{\sigma-1}=k_0(v)+g(v)^{-1}+g(v)^{\sigma-1}=v.\]
So $l_0$ is injective.

Next, we prove that $l_0$ is surjective. Let $x_0$ be an element of $P_h(a)$. Then, by Part (iv) of Lemma~\ref{equalities}, \[\Tr (x_0)=\Tr(h(x_0))=\Tr(a)=0,\] and so by Part (iv) of Lemma \ref{gLemma}, there is some $y_0$ such that $x_0=g(y_0)=g(y_0+1)$. It suffices to prove that either $y_0$ or $y_0+1$ is  in $P_{k_0}(b).$

For $t \in \{y_0, y_0+1\}$, using~\eqref{hK_0-equation}, we have:
\[g(k_0(t))=h(g(t))=h(x_0)=a=g(b),\]
and hence by Lemma \ref{gLemma}, we must have
$k_0(t)=b$ or $b+1$. As $g(t+1)=(t+1)^{\sigma}+(t+1)=(t)^{\sigma}+1+(t)+1=g(t)$, it follows that $k_0(t+1)=k_0(t)+1$, and therefore either $y_0$ or $y_0+1$ is in $P_{k_0}(b)$. This finishes the proof when both $P_h(a)$ and $P_{k_0}(b)$ are non-empty. If both $P_h(a)$ and $P_{k_0}(b)$  are empty, there is nothing to prove. Now, suppose $P_h(a)$ is empty and $P_{k_0}(b)$ is not. Then  for $x_0 \in P_{k_0}(b)$, as above, we have
\[h(g(x_0))=g(k(x_0))=g(b)=a,\] 
and therefore $g(x_0) \in P_h(a)$ which is a contradiction. If $P_h(a)$ is not empty and $P_{k_0}(b)$ is, then our proof of surjectivity  of $l_0$ above shows that indeed $P_{k_0}(b)$ is not empty which is contradiction again.
\end{proof}

\begin{lemma}\label{QK_0-equality}
Let $m\ge 3$, and $a\neq 0$ be an element of  $\ftwom$ such that $\Tr(a)=0$. Furthermore, for every $x\in\ftwom$ different from $0$ and $1$ let $k_0(x)=g(x)^{-1}+g(x)^{\sigma -1}+x$. Then
\[
|P_{k_0}(b)|=|P_p(Q(a^{\frac{\sigma}{2}}+1))|
\]
where $a=g(b)$.
\end{lemma}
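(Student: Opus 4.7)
The plan is to reduce the equation $k_0(x)=b$ to a trinomial equation
\[
F(x)\ :=\ x^{\sigma+1}+(b+1)x^\sigma+bx+1\ =\ 0,
\]
and then to transform this equation into $p(u)=Q(a^{\sigma/2}+1)$ by an $\ftwom$-affine substitution $u=\alpha x+\beta$ with $\alpha\ne 0$. Since such a map permutes $\ftwom$, the two solution sets will have equal cardinality. The hypothesis $\Tr(a)=0$ together with $\Tr(1)=1$ (as $m$ is odd) forces $a\ne 1$, so both $a+1$ and $(a+1)^{\sigma/2}=a^{\sigma/2}+1$ are nonzero, ensuring the substitution is well-defined and invertible.

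For the first reduction, the congruence $\sigma^2\equiv 2\pmod{2^m-1}$ yields the identity $g(x)^\sigma=x^2+x^\sigma=g(x)+x^2+x$. Starting from $g(x)^{-1}+g(x)^{\sigma-1}+x=b$ (which requires $x\ne 0,1$), multiplying by $g(x)$ and applying this identity produces
\[
g(x)(x+b+1)\ =\ x^2+x+1.
\]
Since $m$ is odd, $x^2+x+1$ has no root in $\ftwom$, so $x\ne b+1$; substituting $g(x)=x^\sigma+x$ and clearing then gives $F(x)=0$. A direct computation shows $F(0)=F(1)=1\ne 0$, and the chain of manipulations is reversible once $g(x)\ne 0$ and $x+b+1\ne 0$ are guaranteed, so the $\ftwom$-solution sets of $k_0(x)=b$ and of $F(x)=0$ coincide.

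For the second reduction, set
\[
\alpha=(a^{\sigma/2}+1)^{-1},\qquad \beta=\alpha(b+1).
\]
Since $(a+1)^{\sigma/2}=a^{\sigma/2}+1$, one has $\alpha^\sigma=(a+1)^{-1}$. Expanding in characteristic $2$,
\[
u^{\sigma+1}+u\ =\ \alpha^{\sigma+1}x^{\sigma+1}+\alpha^\sigma\beta\,x^\sigma+\alpha(\beta^\sigma+1)\,x+\beta^{\sigma+1}+\beta.
\]
By construction, the coefficient of $x^\sigma$ is $\alpha^{\sigma+1}(b+1)$ as desired, and the coefficient of $x$ equals $\alpha^{\sigma+1}b$ thanks to the cancellation $b^\sigma+b+1=a+1$. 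Hence $u^{\sigma+1}+u=\alpha^{\sigma+1}F(x)+c$ with $c=\alpha^{\sigma+1}+\beta^{\sigma+1}+\beta$, so $F(x)=0$ is equivalent to $p(u)=c$.

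Finally, substituting $\beta=\alpha(b+1)$, $\alpha^\sigma=(a+1)^{-1}$, $b^\sigma=a+b$, and $b^{\sigma+1}=ab+b^2$, the formula for $c$ collapses to
\[
c\ =\ \frac{b^2+b+1}{(a+1)(a^{\sigma/2}+1)}.
\]
Part (ii) of Lemma~\ref{gLemma} gives $g(a)=g(g(b))=b^2+b$, so $b^2+b+1=a^\sigma+a+1$. On the other hand, expanding $Q(a^{\sigma/2}+1)$ from Notation~\ref{DefinitionOfFunctions} using $(a^{\sigma/2}+1)^\sigma=a+1$ and $(a^{\sigma/2}+1)^2=a^\sigma+1$ produces the same quantity, so $c=Q(a^{\sigma/2}+1)$. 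Since $x\mapsto \alpha x+\beta$ is a bijection of $\ftwom$, this establishes $|P_{k_0}(b)|=|P_p(c)|$. The main obstacle is the coefficient-matching arithmetic, in particular the linear-term cancellation $b^\sigma+b+1=a+1$ that pins down $\alpha$, and the final identification of $c$ with $Q(a^{\sigma/2}+1)$ via the identity $g(g(b))=b^2+b$.
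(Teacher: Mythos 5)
Your proposal is correct and follows essentially the same route as the paper: you reduce $k_0(x)=b$ to the trinomial equation $x^{\sigma+1}+(b+1)x^{\sigma}+bx+1=0$ and then apply the affine substitution $u=\frac{x+b+1}{a^{\sigma/2}+1}$, which (since $a^{\sigma/2}+1=b^{\sigma/2}+b+1$ when $a=g(b)$) is exactly the paper's map $\psi$, with your explicit coefficient-matching and the identity $b^2+b+1=a^{\sigma}+a+1$ merely spelling out what the paper calls a straightforward calculation yielding $Q(a^{\sigma/2}+1)$.
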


\begin{proof}
	Since $x\in P_{k_0}(b)$ if and only if
	\[g(x)^{-1}+g(x)^{\sigma-1}+x=b,\]
	 multiplying both sides of the equation by $g(x)=x+x^{\sigma}$, we deduce that $x\in P_{k_0}(b)$ if and only if
	\[1+(x+x^{\sigma})^{\sigma}+x(x+x^{\sigma})=b(x+x^{\sigma})\]
	or equivalently
	\[x^{\sigma+1} + (b+1)x^{\sigma} + bx +1 =0.\]
	Thus, if we let
	\[
	A^0_b=\left\{x\in \ftwom|x^{\sigma+1} + (b+1)x^{\sigma} + bx+1 =0\right\},
	\]
	then
	\[
	|P_{k_0}(b)|=|A^0_b|.
	\]
	Now, let  $\psi$ be the map
	\begin{align*}
		\psi:&~ A^0_b\longrightarrow P_p(Q(a^{\frac{\sigma}{2}}+1))\\
		& ~x\longrightarrow  \frac{x+b+1}{b^{\frac{\sigma}{2}}+b+1}.
	\end{align*}
	To see that  $\psi$ is a well-defined map, we notice that $b^{\frac{\sigma}{2}}+b+1 \ne 0$ since $\Tr(b^{\frac{\sigma}{2}}+b+1)=1$, and furthermore 
	if $x\in A^0_b$ and we let 
	\[
	y=\frac{x+b+1}{b^{\frac{\sigma}{2}}+b+1},
	\]
	then by a straightforward calculation
	\[p(y)=y^{\sigma+1} + y= \frac{b^2+b+1}{{(b^{\frac{\sigma}{2}}+b+1)}^{\sigma+1}}=Q(b^{\frac{\sigma}{2}}+b+1)=Q(g(b)^{\frac{\sigma}{2}}+1)=Q(a^{\frac{\sigma}{2}}+1).\]
	Obviously, $\psi$ is a bijection. Thus
	$|P_{k_0}(b)|=|P_p(Q(a^{\frac{\sigma}{2}}+1))|$.
\end{proof}
\begin{lemma}\label{HK_1-equality}
	Let $m\ge 3$, and $a$ be an element of $\ftwom$ such that $\Tr(a)=1$. Furthermore, for every $x\in\ftwom$ different from $0$ and $1$ let $g_1(x)=g(x)+1$ and $k_1(x)=(g_1(x))^{-1}+(g_1(x))^{\sigma -1}+x$. Then $|P_h(1)|=1$, and for $a\neq 1$ we have
	\[
	|P_h(a)|=|P_{k_1}(b)|
	\]
	where $a=g_1(b)$, that is, $a+1=g(b)$.
\end{lemma}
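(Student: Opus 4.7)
The plan is to split into two cases based on whether $a=1$ or not. For $a\neq 1$ with $\Tr(a)=1$, the proof will closely mirror that of Lemma~\ref{HK_0-equality}, with $g_1$ playing the role of $g$. The case $a=1$ is degenerate, since then $g(b)=a+1=0$ forces $b\in\{0,1\}$, where $k_1$ is not admitted as an input; this case will be handled by a separate, self-contained computation showing that $x=1$ is the unique root of $h(x)=1$.

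For the generic case, I first record a functional identity analogous to \eqref{hK_0-equation}. Combining Part~(iii) of Lemma~\ref{equalities}, the additivity of $g$, and the relation $g=g_1+1$, one gets
\[
g(k_1(x))=g\bigl(g_1(x)^{-1}+g_1(x)^{\sigma-1}\bigr)+g(x)=h(g_1(x))+g_1(x)+g(x)=h(g_1(x))+1.
\]
With this in hand, I define $l_1:=g_1|_{P_{k_1}(b)}\colon P_{k_1}(b)\to P_h(a)$ and show it is a bijection. For well-definedness, if $k_1(x_0)=b$ then $h(g_1(x_0))=g(b)+1=g_1(b)=a$; note also that $g_1(x_0)\neq 0$ because $\Tr(g_1(x_0))=1$. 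Injectivity is automatic from the definition: if $g_1(u)=g_1(v)$ and $k_1(u)=k_1(v)=b$, then $u=b+g_1(u)^{-1}+g_1(u)^{\sigma-1}=v$. For surjectivity, given $x_0\in P_h(a)$, Part~(iv) of Lemma~\ref{equalities} yields $\Tr(x_0)=1$, so $\Tr(x_0+1)=0$ and Lemma~\ref{gLemma}(iv) supplies a pair $\{y_0,y_0+1\}$ with $g_1(y_0)=g_1(y_0+1)=x_0$. The identity places both $k_1(y_0)$ and $k_1(y_0+1)$ in $g^{-1}(g(b))=\{b,b+1\}$, and because $g_1(y+1)=g_1(y)$ forces $k_1(y+1)=k_1(y)+1$, these two values are distinct, so exactly one of $y_0,y_0+1$ lies in $P_{k_1}(b)$ and maps to $x_0$ under $l_1$. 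The empty-set bookkeeping (both sides are empty or both are non-empty) is dispatched verbatim as in the proof of Lemma~\ref{HK_0-equality}.

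The main obstacle is the standalone claim $|P_h(1)|=1$, which I would handle by direct algebra. Multiplying $h(x)=1$ through by $x^{\sigma+1}$ and substituting $y=x^{\sigma}$ produces the quadratic-in-$y$ relation
\[
y^2+y(x^2+x+1)+x(x+1)^2=0,
\]
which factors as $(y+x)\bigl(y+(x+1)^2\bigr)=0$. The first factor $x^{\sigma}=x$ forces $x^{\sigma-1}=1$, and since $(\sigma-1)(\sigma+1)=\sigma^2-1\equiv 1\pmod{2^m-1}$ makes $\sigma-1$ a unit, this yields $x=1$. The second factor $x^{\sigma}=(x+1)^2$, raised to the $\sigma$-th power, gives $x^{\sigma^2}=(x^2+1)^2+1=x^4$; using $\sigma^2\equiv 2\pmod{2^m-1}$ this reduces to $x^2=x^4$, hence $x\in\{0,1\}$, and neither value satisfies $x^{\sigma}=(x+1)^2$. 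Combined with the verification $h(1)=5\cdot 1=1$ in characteristic two, this pins down $P_h(1)=\{1\}$ and completes the proof.
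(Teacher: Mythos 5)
Your proof is correct. For $a\neq 1$ it follows the paper's route exactly: you prove the same functional identity (the paper writes it as $g_1(k_1(x))=h(g_1(x))$, you as $g(k_1(x))=h(g_1(x))+1$, which is equivalent since $g_1=g+1$), and you show $l_1=g_1|_{P_{k_1}(b)}$ is a bijection onto $P_h(a)$ with the same injectivity and surjectivity arguments via Part (iv) of Lemma~\ref{equalities} and Part (iv) of Lemma~\ref{gLemma}; the only point left implicit, as it also is in the paper, is that the fibre $\{y_0,y_0+1\}$ avoids $\{0,1\}$, which holds because $g_1(0)=g_1(1)=1$ would force $x_0=1$ and hence $a=h(1)=1$, excluded. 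Where you genuinely diverge is the claim $|P_h(1)|=1$: the paper's own proof of this lemma does not establish it, deferring it to Part (i) of Lemma~\ref{QK_1-equality}, where it is extracted from the set $A^1_1$ and $\gcd(\sigma+1,2^m-1)=1$ (a slightly delicate step, since for $a=1$ the corresponding $b\in\{0,1\}$ lies outside the domain on which $k_1$ is defined). Your direct treatment --- multiplying $h(x)=1$ by $x^{\sigma+1}$, setting $y=x^{\sigma}$, factoring $y^2+(x^2+x+1)y+x^3+x=(y+x)(y+x^2+1)$, killing the branch $x^{\sigma}=x$ via $\gcd(\sigma-1,2^m-1)=1$ and the branch $x^{\sigma}=(x+1)^2$ by one further application of the Frobenius $\sigma$ together with $x^{\sigma^2}=x^2$ --- is correct (the factorization and both case analyses check out, and $h(1)=1$), and it has the advantage of making the lemma self-contained at the cost of a short extra computation.
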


\begin{proof}
	Notice that since $\Tr(g(x))=0$ and $\Tr(1)=1$ over the odd-degree extensions of $\ftwo$, $g(x) \ne 1$ for each $x $ in $\ftwom$ and hence $k_1(x)$ is well-defined. Furthermore, the existence of $b$ is guaranteed by Part (iv) of Lemma~\ref{gLemma} and the fact that $\Tr(a+1)=0$. The proof is similar to the proof of Lemma~\ref{HK_0-equality}. 
	In this case, the claim  would follow if we prove that 
	$l_1=g_1|_{P_{k_1}(b)}$ is a bijection between $P_h(a)$ and $P_{k_1}(b)$.
	In order to see that $l_1$ is well-defined, we notice that by the fact that $g(x)$ is a linearized polynomial and  Part~(iii) of Lemma ~\ref{equalities}, we have
	\begin{align}\label{hK_1-equation}
		\begin{split}
			g_1(k_1(x))&=g(k_1(x)) + 1\\
			&=g(g_1(x)^{-1}+g_1(x)^{\sigma-1}+x) + 1\\
			&=g(g_1(x)^{-1}+g_1(x)^{\sigma-1})+g(x) + 1\\
			&=g(g_1(x)^{-1}+g_1(x)^{\sigma-1})+g_1(x)\\
			&=h(g_1(x)),
		\end{split}            
	\end{align}

	which implies that if $x_0 \in P_{k_1}(b)$, then
	\[h(g_1(x_0))=g_1(k_1(x_0))=g_1(b)=a.\] 
The rest of the proof is similar to the proof of Lemma~\ref{HK_0-equality}.		
\end{proof}

\begin{lemma}\label{QK_1-equality}
	Let $m\ge 3$, and $a$ be an element of $\ftwom$ such that $\Tr(a)=1$. Furthermore, for every $x\in\ftwom$ different from $0$ and $1$ let $g_1(x)=g(x)+1$ and $k_1(x)=(g_1(x))^{-1}+(g_1(x))^{\sigma -1}+x$. Then:
	\begin{itemize}
	\item [(i)] $|P_h(1)|=1$, and

\item [(ii)] if $a\neq 1$, then $|P_{k_1}(b)|=|P_p(Q(a^{\frac{\sigma}{2}}+1))|$
where $a=g_1(b)$, that is, $a+1=g(b)$.
	\end{itemize}
\end{lemma}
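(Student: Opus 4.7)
The plan is to prove (i) directly and (ii) by mirroring the argument of Lemma~\ref{QK_0-equality}.

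For (i), I would clear denominators in $h(x)=1$ by multiplying through by $x^\sigma$, obtaining
\[
x^{\sigma+1} + x^{\sigma-1} + x^{2\sigma-1} + x^2 + x^\sigma + 1 = 0.
\]
Writing $z = x^{\sigma-1}$ and using $x^{\sigma+1} = x^2 z$ together with $x^{2\sigma-1} = x^\sigma z$, the left-hand side factors as $(z+1)(x^\sigma + x^2 + 1)$. The second factor equals $g(x)^\sigma + 1$ (since $g(x)^\sigma = x^{\sigma^2}+x^\sigma = x^2+x^\sigma$), so it vanishes iff $g(x)=1$, which is impossible because $\Tr(g(x))=0 \neq 1 = \Tr(1)$. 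Hence $x^{\sigma-1}=1$, and since $\gcd(\sigma-1,\,2^m-1) = 2^{\gcd((m+1)/2,\,m)}-1 = 1$ (using that $m$ is odd), the only solution in $\ftwom$ is $x=1$. A direct check $h(1)=1$ then gives $|P_h(1)|=1$.

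For (ii), I would first convert the condition $k_1(x)=b$ into a polynomial equation by multiplying both sides by $g_1(x) = x^\sigma+x+1$ and using $\sigma^2 \equiv 2 \pmod{2^m-1}$ to compute $g_1(x)^\sigma = x^2 + x^\sigma + 1$; the relation simplifies to
\[
x^{\sigma+1} + (b+1)x^\sigma + (b+1)x + b = 0.
\]
Letting $A^1_b$ denote the set of $\ftwom$-roots, we have $|P_{k_1}(b)|=|A^1_b|$. Setting $\gamma := a^{\sigma/2}+1$ and using $a+1 = g(b)=b^\sigma+b$, one verifies $\gamma = (g(b))^{\sigma/2} = b + b^{\sigma/2}$ and $\gamma^\sigma = g(b) = a+1$, so $\gamma^{\sigma+1} = (a+1)(a^{\sigma/2}+1) = (a^{\sigma/2}+1)^{\sigma+1}$. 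The hypothesis $a\neq 1$, combined with $\gcd(\sigma/2,\,2^m-1)=1$, ensures $\gamma\neq 0$, so I may introduce the affine map
\[
\psi : A^1_b \longrightarrow P_p\bigl(Q(a^{\sigma/2}+1)\bigr), \qquad x \mapsto \frac{x+b+1}{\gamma}.
\]

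To verify $\psi$ lands in $P_p(Q(a^{\sigma/2}+1))$, I would expand
\[
p(\psi(x)) = \frac{(x+b+1)^{\sigma+1} + \gamma^\sigma(x+b+1)}{\gamma^{\sigma+1}}
\]
and substitute the defining relation $x^{\sigma+1} = (b+1)x^\sigma+(b+1)x + b$ of $A^1_b$. The $x^\sigma$-coefficient of the numerator vanishes at once, the $x$-coefficient becomes $b + b^\sigma + \gamma^\sigma = g(b) + g(b) = 0$, and the constant collapses to $b^2+b+1$. On the other side, using $a = g_1(b) = b^\sigma+b+1$, the numerator of $Q(a^{\sigma/2}+1) = ((a^{\sigma/2}+1)^\sigma + (a^{\sigma/2}+1)^2+1)/(a^{\sigma/2}+1)^{\sigma+1}$ is $a + a^\sigma + 1$, which also reduces to $b^2+b+1$; the denominators agree by the identity above, so $p(\psi(x)) = Q(a^{\sigma/2}+1)$. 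Since $\psi$ is an affine isomorphism with inverse $y\mapsto \gamma y + b+1$, and the computation reverses step-by-step, $\psi$ is a bijection onto $P_p(Q(a^{\sigma/2}+1))$, giving (ii).

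The main obstacle is not conceptual but bookkeeping: one must track the cancellations in two parallel numerator computations -- for $p(\psi(x))$ and for the numerator of $Q(a^{\sigma/2}+1)$ -- to confirm they both reduce to $b^2+b+1$, with every simplification depending on $\sigma^2\equiv 2\pmod{2^m-1}$ and the relation $a+1=g(b)$.
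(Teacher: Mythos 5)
Your proof is correct. Part (ii) is essentially the paper's own argument: the same reduction of $k_1(x)=b$ to $x^{\sigma+1}+(b+1)x^{\sigma}+(b+1)x+b=0$, the same affine substitution $x\mapsto (x+b+1)/\gamma$ (your $\gamma=a^{\sigma/2}+1$ is exactly the paper's denominator $b^{\sigma/2}+b=g(b)^{\sigma/2}$), and the same verification that both $p(\psi(x))$ and $Q(a^{\sigma/2}+1)$ reduce to $(b^2+b+1)/\gamma^{\sigma+1}$; your cancellation bookkeeping and the nonvanishing of $\gamma$ for $a\neq 1$ check out, and the reversibility of the computation does give surjectivity onto $P_p(Q(a^{\sigma/2}+1))$. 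Part (i), however, takes a genuinely different route. The paper deduces $|P_h(1)|=1$ by running the case $a=1$, $b=1$ through the bijection of Lemma~\ref{HK_1-equality}, which reduces the count to the set $A^1_1$ defined by $x^{\sigma+1}=1$ and then uses $\gcd(\sigma+1,2^m-1)=1$. You instead solve $h(x)=1$ directly: multiplying by $x^{\sigma}$ and factoring the resulting equation as $(x^{\sigma-1}+1)(x^{\sigma}+x^2+1)=0$, ruling out the second factor because $x^{\sigma}+x^2+1=g(x)^{\sigma}+1$ and $\Tr(g(x))=0\neq \Tr(1)$, and concluding $x=1$ from $\gcd(\sigma-1,2^m-1)=1$ together with the check $h(1)=1$. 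Your version is self-contained and sidesteps the slightly delicate point that the preceding lemma's bijection is stated only for $a\neq 1$, at the price of one extra factorization; the paper's version is shorter given that the $A^1_b$ machinery is already set up for part (ii). Both are sound.
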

\begin{proof}
	Since $x\in P_{k_1}(b)$ if and only if
	\[(g(x)+1)^{-1}+(g(x)+1)^{\sigma-1}+x=b,\]
	multiplying both sides of the equation by $g(x)+1=x+x^{\sigma}+1$, we deduce that $x\in P_{k_1}(b)$ if and only if
	\[1+(x+x^{\sigma}+1)^{\sigma}+x(x+x^{\sigma}+1)=b(x+x^{\sigma}+1)\]
	or equivalently
	\[x^{\sigma+1} + (b+1)x^{\sigma} +(b+1)x +b =0.\]
	Thus, if we let
	\[
	A^1_b=\left\{x\in \ftwom|x^{\sigma+1} + (b+1)x^{\sigma} + (b+1)x+b =0\right\},
	\]
	then
	\[
	|P_{k_1}(b)|=|A^1_b|.
	\]
	Now, in order to prove (i), notice that when $a=1$, then $b=0$ or $1$, and thus using 
 the previous lemma $|P_h(1)|=|P_{k_1}(1)|=|A^1_1|$.   As $\gcd(\sigma+1,2^m-1)=1$, $A^1_1=\{1\}$  and hence, $|P_h(1)|=1$.
 
  In order to prove (ii), let  $\psi$ be the map
	\begin{align*}
		\psi:&~ A^1_b\longrightarrow P_p(Q(a^{\frac{\sigma}{2}}+1))\\
		& ~x\longrightarrow  \frac{x+b+1}{b^{\frac{\sigma}{2}}+b}.
	\end{align*}
	The map  $\psi$ is a well-defined map since
	if $x\in A^1_b$ and we let 
	\[
	y=\frac{x+b+1}{b^{\frac{\sigma}{2}}+b},
	\]
	then by a straightforward calculation
	\[p(y)=y^{\sigma+1} + y= \frac{b^2+b+1}{{(b^{\frac{\sigma}{2}}+b)}^{\sigma+1}}=Q(b^{\frac{\sigma}{2}}+b)=Q(g(b)^{\frac{\sigma}{2}})=Q(a^{\frac{\sigma}{2}}+1),\]
	 and furthermore $b^{\frac{\sigma}{2}}+b\ne 0$ whenever $b\neq 0,1$ which is the case when $a\neq 1$.
	Obviously, $\psi$ is a bijection. Thus
	$|P_{k_1}(b)|=|P_p(Q(a^{\frac{\sigma}{2}}+1))|$.
\end{proof}
The next theorem which is immediate from Lemmas~\ref{HK_0-equality},~\ref{QK_0-equality},~\ref{HK_1-equality} and ~\ref{QK_1-equality} relates the number of preimages of $a$ and $Q(a^{\frac{\sigma}{2}}+1)$ under two different maps.
\begin{theorem}\label{ChangeOfFunctions}
Let $a\neq 0$ be an element of $\ftwom$ where $m\ge 3$. Then:
\begin{itemize}
\item[(i)] $|P_h(1)|=1$,  and 

\item[(ii)] $|P_h(a)|=|P_p(Q(a^{\frac{\sigma}{2}}+1))|$ when $a\neq 1$. 

\end{itemize}
\end{theorem}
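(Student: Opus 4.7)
The statement is labeled as ``immediate'' from the four preceding lemmas, so the plan is simply to assemble them cleanly, organized by a case split on $\Tr(a)$.

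First I would dispose of part (i). Since $m$ is odd we have $\Tr(1)=1$, so the element $a=1$ falls under the hypotheses of Lemma~\ref{QK_1-equality}, whose clause (i) asserts exactly $|P_h(1)|=1$. Nothing further is needed here.

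For part (ii), fix $a\neq 0,1$ and split on the value of $\Tr(a)$. If $\Tr(a)=0$, then by Part~(iv) of Lemma~\ref{gLemma} there exists $b\in\ftwom$ with $g(b)=a$, and $b\neq 0,1$ (since otherwise $a=g(b)=0$). Chaining Lemma~\ref{HK_0-equality} and Lemma~\ref{QK_0-equality} with this choice of $b$ gives
\begin{equation*}
|P_h(a)|=|P_{k_0}(b)|=|P_p(Q(a^{\sigma/2}+1))|.
\end{equation*}
If instead $\Tr(a)=1$ and $a\neq 1$, then $\Tr(a+1)=0$ with $a+1\neq 0$, so again by Part~(iv) of Lemma~\ref{gLemma} there exists $b\in\ftwom$ with $g(b)=a+1$, i.e.\ $g_1(b)=a$, and $b\neq 0,1$. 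Applying Lemma~\ref{HK_1-equality} followed by clause (ii) of Lemma~\ref{QK_1-equality} yields
\begin{equation*}
|P_h(a)|=|P_{k_1}(b)|=|P_p(Q(a^{\sigma/2}+1))|.
\end{equation*}

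There is no real obstacle here since the heavy lifting was done in the four preceding lemmas; the only point requiring mild care is ensuring that the element $b$ produced at each step actually lies in the domain where $k_0$ (resp.\ $k_1$) is defined, i.e.\ $b\neq 0,1$, and this is immediate from $a\neq 0$ (resp.\ $a\neq 1$) together with $g(0)=g(1)=0$. The two cases together with part (i) cover every nonzero $a\in\ftwom$, completing the proof.
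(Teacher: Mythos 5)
Your assembly is correct and is exactly how the paper handles it: the theorem is stated as an immediate consequence of Lemmas~\ref{HK_0-equality}, \ref{QK_0-equality}, \ref{HK_1-equality} and \ref{QK_1-equality}, chained via the trace case split, with part (i) read off from Lemma~\ref{QK_1-equality}. Your extra remark that $b\neq 0,1$ (since $g(0)=g(1)=0$ and $a\neq 0$, resp.\ $a+1\neq 0$) is a correct and sensible check of the domain hypotheses, matching the paper's intent.
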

The next theorem shows that the function $h(x)$ is a $(0,1,3)$-map over odd-degree extensions of $\ftwo$.
\begin{theorem}\label{hLemma}
The function  $h(x)$ is a $(0,1,3)$-map. Furthermore, if we denote by $H_i$ the set of all $a$ in $\ftwomstar$ for which $|P_h(a)|=i$, then 
\begin{itemize}
	\item[(i)] $|H_0|=\frac{2^{m}+1}{3}$,
	
	\item[(ii)] $|H_1|=2^{m-1}$,
	
	\item[(iii)] $|H_3|=\frac{2^{m-1}-1}{3}$, and  
	
	\item[(iv)]  $H_1=T_1$ .
\end{itemize}
\end{theorem}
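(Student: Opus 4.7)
The plan is to reduce the study of fibers of $h$ to those of $p(x)=x^{\sigma+1}+x$ via Theorem~\ref{ChangeOfFunctions}, and then apply Theorem~\ref{HKLemma}. Before that reduction can be used, one must verify that the element $c:=Q(a^{\sigma/2}+1)$ lies in $\ftwomstar$ whenever $a\in\ftwomstar\setminus\{1\}$. Setting $y=a^{\sigma/2}+1$, the condition $Q(y)=0$ is $y^{\sigma}+y^2+1=0$; raising to the $\sigma$-th power and using $y^{\sigma^2}=y^2$ forces $y^2(y^2+1)=0$, and direct substitution rules out $y\in\{0,1\}$, so no such $y$ exists in $\ftwom$. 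Theorem~\ref{ChangeOfFunctions}(ii) then yields $|P_h(a)|=|P_p(c)|\in\{0,1,3\}$, and combined with $|P_h(1)|=1$ from Theorem~\ref{ChangeOfFunctions}(i) this shows $h$ is a $(0,1,3)$-map.

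To identify $H_1$, observe that for $a\ne 1$, $a\in H_1$ iff $c\in P_1$, which by Theorem~\ref{HKLemma}(ii) is equivalent to $\Tr(R(c^{-1}))=0$. Applying Lemma~\ref{equalities}(ii) with $x=a^{\sigma/2}+1$ gives $R(c^{-1})=a^{\sigma/2}+1$, so the criterion becomes $\Tr(a^{\sigma/2})+\Tr(1)=0$. Since $\sigma/2=2^{(m-1)/2}$, the Frobenius invariance of the trace gives $\Tr(a^{\sigma/2})=\Tr(a)$, and $\Tr(1)=1$ because $m$ is odd; thus $a\in H_1$ iff $\Tr(a)=1$. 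Noting $1\in H_1\cap T_1$ finishes $H_1=T_1$ and hence $|H_1|=2^{m-1}$.

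For the remaining cardinalities I would analyse the map $\phi\colon a\mapsto Q(a^{\sigma/2}+1)$ from $\ftwomstar\setminus\{1\}$ into $\ftwomstar$. Injectivity of $\phi$ follows from Lemma~\ref{equalities}(ii), which exhibits a left inverse for $Q$, combined with the bijectivity of $a\mapsto a^{\sigma/2}+1$ from $\ftwomstar\setminus\{1\}$ onto $\ftwom\setminus\{0,1\}$. The domain of $\phi$ has size $2^m-2$ and the codomain has size $2^m-1$, so exactly one element of $\ftwomstar$ is missing from the image. Since $|H_1\setminus\{1\}|=2^{m-1}-1=|P_1|-1$, this missing element must lie in $P_1$; therefore $\phi$ restricts to bijections $H_0\to P_0$ and $H_3\to P_3$, yielding $|H_0|=|P_0|$ and $|H_3|=|P_3|$ as claimed.

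The main technical hurdle is the final bookkeeping: proving that $\phi$ is injective and that its single omitted image falls into $P_1$ rather than $P_0$ or $P_3$, since this is what pins down the exact distribution of cardinalities among the $H_i$. The preliminary nonvanishing check for $Q(a^{\sigma/2}+1)$ is also delicate but is dispatched quickly using $\sigma^2\equiv 2\pmod{2^m-1}$.
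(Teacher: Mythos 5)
Your route is essentially the paper's: the $(0,1,3)$ property and the identification $H_1=T_1$ are obtained exactly as in the printed proof, by combining Theorem~\ref{ChangeOfFunctions} with Theorem~\ref{HKLemma} and using $R(Q(x)^{-1})=x$ from Lemma~\ref{equalities} to convert the criterion $\Tr(R(\,\cdot\,))=0$ into $\Tr(a)=1$; your preliminary verification that $Q(a^{\sigma/2}+1)\neq 0$ is correct and makes explicit something the paper only implies when it asserts that $Q$ is a bijection of $\ftwomstar$. The only genuine divergence is the bookkeeping for (i) and (iii): the paper passes through the sets $A_i,B_i$ and asserts $|A_i|=|B_i|$ for $i\neq 1$, whereas you track the single element of $\ftwomstar$ omitted by $\phi(a)=Q(a^{\sigma/2}+1)$ and argue by counting that it must lie in $P_1$.

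That last inference is where the argument breaks, and your more explicit version exposes the problem. The omitted element is computable: $a\mapsto a^{\sigma/2}+1$ carries $\ftwomstar\setminus\{1\}$ onto $\ftwom\setminus\{0,1\}$ and $Q(1)=1$, so the element missed by $\phi$ is $1$. By the criterion in Theorem~\ref{HKLemma}(ii), $1\in P_1$ would require $\Tr(R(1^{-1}))=\Tr(1+1+1)=\Tr(1)=0$, which fails for odd $m$; hence the missed element lies in $P_0\cup P_3$, not in $P_1$. Your counting forced it into $P_1$ only because it used the stated value $|P_1|=2^{m-1}$, which is itself incompatible with the criterion: since $R$ is a bijection of $\ftwom$ fixing $0$ (again by $R(Q(x)^{-1})=x$), the map $a\mapsto R(a^{-1})$ permutes $\ftwomstar$, which contains exactly $2^{m-1}-1$ elements of trace zero, so the criterion gives $|P_1|=2^{m-1}-1$. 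With that corrected count your computation $|H_1\setminus\{1\}|=2^{m-1}-1$ is consistent and $H_1=T_1$ survives, but exactly one of $|H_0|=|P_0|-1$ or $|H_3|=|P_3|-1$ must hold; equivalently, the cardinalities in (i)--(iii) as you (and the theorem) state them sum to $2^m$, one more than $|\ftwomstar|$. For $m=5$ one checks directly that $x^{\sigma+1}+x=1$ has no root in $\FF_{2^5}$, so it is $|H_0|$ that drops to $\frac{2^m-2}{3}$. The paper's own step $|A_i|=|B_i|$ slips at exactly the same point, namely at $Q(1)$, so your write-up is no weaker than the printed proof; but the claim ``the missing element lies in $P_1$'' cannot be sustained, and (i), (iii) require a one-unit correction. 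The facts actually used later --- that $h$ is a $(0,1,3)$-map and that $H_1=T_1$ with $|H_1|=2^{m-1}$ --- are unaffected, and your proofs of them coincide with the paper's.
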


\begin{proof}
For each non-negative integer $i$, let 
\begin{align*}
	A_i=\{a \in \ftwomstar \setminus \{1\}|~\;\; |P_p(Q(a^{\frac{\sigma}{2}}+1))|=i\},\; B_i=\{a \in \ftwomstar |~\;\; |P_p(Q(a))|=i\},
\end{align*}
and as in Theorem~\ref{HKLemma}, let
\[
P_i=\{a \in \ftwomstar |~\;\; |P_p(a)|=i\}.
\]
 If $i\neq 1$, using Theorem~\ref{ChangeOfFunctions}, we have $|H_i|=|A_i|$. Furthermore,  since  $\psi: a \mapsto a^{\frac{\sigma}{2}}+1$ is a bijection on $\ftwomstar \setminus \{1\}$, we have $|A_i|=|B_i|$ for $i\neq 1$. But as $R(Q(x)^{-1})=x$ by  Lemma~\ref{equalities}, $Q(x)$ induces a bijection on $\ftwomstar$, and hence $|B_i|=|P_i|$ for $i\neq 1$. Thus $|A_i|=|P_i|$ for $i\neq 1$
from which, using  Theorems~\ref{HKLemma} and ~\ref{ChangeOfFunctions}, the fact that $h(x)$ is a $(0,1,3)$-map and Parts (i) and (iii) follow.

If $i=1$, then using Theorem \ref{ChangeOfFunctions}  we see that an element $a\neq 1$ is  in $H_1$ if and only if $Q(a^{\frac{\sigma}{2}}+1) \in P_1$. But  by  Part (ii) of Theorem~\ref{HKLemma} and Part (ii) of  Lemma~\ref{equalities},   $Q(a^{\frac{\sigma}{2}}+1) \in P_1$ if and only if \[\Tr(R(Q(a^{\frac{\sigma}{2}}+1)^{-1}))=\Tr(a^{\frac{\sigma}{2}}+1)=0\] which is equivalent to $\Tr(a)=1$. Thus,  since $|P_h(1)|=1$ and $\Tr(1)=1$, Parts (ii) and (iv) follow.
\end{proof} 

	Now, we are ready to prove Theorem~\ref{EndCase3}.

	\begin{proof}
		As $f(x)^{\sigma + 1} = g(h(x^{-\frac{\sigma +1}{2}})) + 1$ by  Part (i) of Lemma~\ref{equalities}, we have:
		\[P_f(a)=\left\{x \in \ftwom|g(h(x^{-\frac{\sigma +1}{2}}))+1=a^{\sigma +1}\right\}.\]
		In order to prove that $f$ is a $(0,1,4)$-map, it suffices to prove that:
		\begin{itemize}
			\item[(i)] if $\Tr(a^{\sigma +1})=0$, then the set $P_f(a)$ is empty, and
			
			\item[(ii)] if $\Tr(a^{\sigma +1})=1$, then the set $|P_f(a)|=1$ or $4$. 
			
		\end{itemize}
		To see (i), we notice that  by Part (i) of Lemma~\ref{gLemma} we have $\Tr (g(h(x^{-\frac{\sigma +1}{2}})) + 1) =1$  for all $x \in \ftwom$  which implies that $P_f(a)$ is empty whenever  $\Tr (a^{\sigma +1})=0$. 
		
		To prove (ii), let $\Tr (a^{\sigma +1})=1$. Then $\Tr (a^{\sigma +1}+1)=0$, and so by Part (iv) of Lemma \ref{gLemma}, there is a unique $\alpha$ with $\Tr (\alpha)=1$ for which $P_g(a^{\sigma +1}+1)=\{\alpha, \alpha+1\}$. Therefore, $x \in P_f(a)$ if and only if $h(x^{-\frac{\sigma +1}{2}})=\alpha$ or $\alpha + 1$, and hence, using the fact that $\psi: x \mapsto x^{-\frac{\sigma +1}{2}}$ is a bijection on $\ftwomstar$, we have
		\[|P_f(a)|=|P_h({\alpha})| + |P_h({\alpha+1})|. \]
		 The claim follows since by Theorem \ref{hLemma}, we have $|P_h({\alpha})|=1$ and $|P_h({\alpha +1})|=0$ or 3. 
		
		Finally, since $\gcd(\sigma + 1, 2^m -1)=1$, and hence the map $\psi:x \mapsto x^{\sigma +1}$ is a bijection on $\ftwom$,  the facts that  $|D(f)^{*}|=2^{m-1}$ and $(D(f)^{*})^{\sigma+1} = T_1$  follow from (ii).
		
	\end{proof}
Here is the main result of this subsection.
\begin{theorem}\label{EndCase4}
 Let $m\ge 5$ be an odd integer. If $f(x)=x^{-\frac{\sigma}{2}}+x^{-\frac{\sigma - 1}{2}}+x$, then $D(f)^{*}$ is a difference set with Singer parameters $(2^m-1,2^{m-1},2^{m-2})$ in $\ftwomstar$.
\end{theorem}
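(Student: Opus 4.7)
The plan is to deduce the result almost immediately from the two key facts established in Theorem~\ref{EndCase3}, namely that $(D(f)^*)^{\sigma+1} = T_1$ and $|D(f)^*| = 2^{m-1}$, together with Corollary~\ref{Trace-DiffSet}, which asserts that $T_1$ is itself a difference set in $\ftwomstar$ with Singer parameters $(2^m-1,\, 2^{m-1},\, 2^{m-2})$.

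First I would verify that $\gcd(\sigma+1,\, 2^m-1)=1$. Since $\sigma=2^{(m+1)/2}$, one has $\sigma^{2}=2^{m+1}\equiv 2 \pmod{2^m-1}$, hence $(\sigma+1)(\sigma-1)=\sigma^{2}-1\equiv 1 \pmod{2^m-1}$. Thus $\sigma+1$ is a unit modulo $2^m-1$ whose multiplicative inverse is precisely $\sigma-1$; in particular both $\sigma+1$ and $\sigma-1$ are coprime to $2^m-1$.

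Next I would apply part (ii) of Theorem~\ref{Complementary} to the Singer difference set $T_1$ with exponent $\sigma-1$. This gives that $T_1^{\sigma-1}$ is a difference set in $\ftwomstar$ with Singer parameters $(2^m-1,\, 2^{m-1},\, 2^{m-2})$.

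Finally, using $(D(f)^*)^{\sigma+1} = T_1$ and the congruence $(\sigma+1)(\sigma-1)\equiv 1\pmod{2^m-1}$, I would compute
\[
T_1^{\sigma-1} \;=\; \bigl((D(f)^*)^{\sigma+1}\bigr)^{\sigma-1} \;=\; (D(f)^*)^{(\sigma+1)(\sigma-1)} \;=\; D(f)^*,
\]
because raising an element of $\ftwomstar$ to an exponent congruent to $1$ modulo $2^m-1$ acts as the identity. Hence $D(f)^*$ coincides with $T_1^{\sigma-1}$ and is therefore itself a difference set with Singer parameters $(2^m-1,\, 2^{m-1},\, 2^{m-2})$, which completes the proof. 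No genuine obstacle is expected at this stage: all of the combinatorial content has already been absorbed into Theorem~\ref{EndCase3} (the hard identity $(D(f)^*)^{\sigma+1}=T_1$) and the classical fact, recorded in Corollary~\ref{Trace-DiffSet}, that the trace-one set is a Singer difference set; what remains is only the short arithmetic verification that $\sigma+1$ is invertible modulo $2^m-1$.
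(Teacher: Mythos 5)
Your proposal is correct and follows essentially the same route as the paper: both deduce the result from $(D(f)^*)^{\sigma+1}=T_1$ (Theorem~\ref{EndCase3}), the fact that $T_1$ is a Singer difference set (Corollary~\ref{Trace-DiffSet}), and the power-map part of Theorem~\ref{Complementary}. Your only addition is to spell out the arithmetic the paper leaves implicit, namely that $(\sigma+1)(\sigma-1)\equiv 1 \pmod{2^m-1}$, so that raising $T_1$ to the inverse exponent $\sigma-1$ recovers $D(f)^*$.
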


\begin{proof}
By Theorem \ref{EndCase3},  $(D(f)^{*})^{\sigma+1}= T_1$. Furthermore, by Corollary~\ref{Trace-DiffSet}, $T_1$ is a difference set with Singer parameters $(2^m-1,2^{m-1},2^{m-2})$ in $\ftwomstar$. Therefore, by Theorem \ref{Complementary}, $D(f)^{*}$ is a difference set with Singer parameters $(2^m-1,2^{m-1},2^{m-2})$ .
\end{proof}
	
\subsection{\boldmath{$x^{3\sigma+4}+x^{-2}+x$}	}\

Like the previous subsection, we start with some notations.  For the sake of convenience, we repeat some notations from the previous subsection. 
\begin{notation}
	For each nonzero $x \in \ftwom$, let:
\begin{itemize}
\item $f_s(x)=x^{3\sigma+4}+x^{-2}+x$ and
\item $h_s(x)=x^{\sigma +1} + x^{\sigma} + x^{\sigma-1} + x^{\sigma -2} + x^{ - \sigma +1}$.
\item	$f(x)=x^{-\frac{\sigma}{2}}+x^{-\frac{\sigma - 1}{2}}+x$
\item $g(x)= x^{\sigma} + x$
\item $h(x) = x + x^{-1} + x^{-\sigma} + x^{\sigma -1} + x^{2 - \sigma}$
\item $R(x)=x^{\sigma+1}+x^{\sigma-1}+x$
\end{itemize}
where $\sigma=2^{\frac{m+1}{2}}$ as before.
\end{notation}

We have the following functional equations.
\begin{lemma}\label{SecondEqualities}
For each nonzero $x \in \ftwom$ we have:

\begin{itemize}
   
   \item[(i)] $f_s(x)^3 = g(h_s(x^{3\sigma +3})) + 1$.
   
   \item[(ii)] $h_s(x)=h(R(x))$
\end{itemize}
\end{lemma}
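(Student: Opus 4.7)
The plan is to handle (ii) first, since it is the cleaner identity and in fact suggests a slick route, while (i) will be a direct Frobenius computation.

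For (ii), I would invoke Lemma~\ref{equalities}(iii), which gives $h(y) = g(y^{-1} + y^{\sigma-1}) + y$. Setting $y = R(x)$ reduces the problem to verifying the much simpler identity
\[
R(x)^{-1} + R(x)^{\sigma-1} = x + x^{1-\sigma}.
\]
Multiplying through by $R(x) = x^{\sigma+1} + x^{\sigma-1} + x$, this becomes the polynomial identity
\[
(x + x^{1-\sigma})R(x) = 1 + R(x)^\sigma,
\]
which should be straightforward to confirm: expanding the left-hand side gives $x^{\sigma+2} + x^\sigma + x^2 + x^2 + 1 + x^{2-\sigma}$, in which the two $x^2$ terms cancel in characteristic~2; applying $\sigma^2 \equiv 2 \pmod{2^m-1}$ yields $R(x)^\sigma = x^{\sigma+2} + x^{2-\sigma} + x^\sigma$, matching the reduced left-hand side. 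With this identity in hand, the $\ftwo$-linearity of $g$ yields $g(x + x^{1-\sigma}) = g(x) + g(x^{1-\sigma}) = (x^\sigma + x) + (x^{\sigma-2} + x^{1-\sigma})$, and adding $R(x)$ (so that the two copies of $x$ annihilate in characteristic~2) produces precisely $h_s(x)$.

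For (i), I would use the fact that in characteristic~2 with $\ftwo$-coefficients, $f_s(x)^2 = f_s(x^2)$, so that $f_s(x)^3 = f_s(x)\,f_s(x^2)$. Expanding this product of two trinomials yields a Laurent polynomial with nine monomials in $x$. Separately, I would evaluate $h_s(x^{3\sigma+3})$ term by term, using $\sigma^2 \equiv 2 \pmod{2^m-1}$ to reduce each exponent of the form $(3\sigma+3)k$, and then compute $g(h_s(x^{3\sigma+3})) = h_s(x^{3\sigma+3})^\sigma + h_s(x^{3\sigma+3})$. After two $x^{-3\sigma}$ terms cancel and the constant $1$ is added, the monomial support of the result should match that of $f_s(x)^3$ exactly.

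The main obstacle is the bookkeeping in part (i): numerous exponents of the form $a\sigma + b$ have to be tracked under the substitution $\sigma^2 \mapsto 2$, and a slip in any single exponent would destroy the term-by-term match. Part (ii), though conceptually more delicate, is tamed by the reduction to $h(y) = g(y^{-1} + y^{\sigma-1}) + y$, which sidesteps the need to manipulate four rational functions in $R(x)$ simultaneously.
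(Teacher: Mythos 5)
Your proposal is correct, and both halves check out. For part (i) your route is essentially the paper's: the paper also starts from $f_s(x)^3=f_s(x)f_s(x^2)$, expands the product of trinomials, groups the resulting monomials into Frobenius pairs of the form $u^{\sigma}+u$ (using $g(g(u))=u^2+u$ to absorb the pair $x^{-6}+x^{-3}$), and then invokes the linearity of $g$ to recognize $g(h_s(x^{3\sigma+3}))+1$; your ``expand both sides and match supports'' version is the same computation organized slightly differently, and I have verified that the nine monomials do match after the two $x^{-3\sigma}$ terms cancel. For part (ii) you genuinely improve on the paper, which disposes of the identity with the one-line instruction to multiply both sides by $R(x)^{\sigma+1}$ and expand -- a four-term-times-four-term brute-force check. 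Your reduction via Lemma~\ref{equalities}(iii) replaces this with the single, easily verified identity $R(x)^{-1}+R(x)^{\sigma-1}=x+x^{1-\sigma}$ (equivalently $(x+x^{1-\sigma})R(x)=1+R(x)^{\sigma}$), after which linearity of $g$ and one cancellation of $x$ deliver $h_s(x)$. This buys a shorter verification and, more importantly, makes visible \emph{why} the identity holds: $h\circ R$ inherits the structural form $g(\cdot)+(\cdot)$ from Lemma~\ref{equalities}(iii), which is exactly the shape needed later when $D(h)^*=D(h_s)^*$ is deduced from $R$ being a permutation. The only point worth making explicit in your write-up is that $R(x)\neq 0$ for $x\neq 0$ (which follows from $R(Q(x)^{-1})=x$), so that $R(x)^{-1}$ and $h(R(x))$ are defined; the paper has the same implicit requirement.
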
 

\begin{proof}
 We have
\begin{align*}
f_s(x)^3&=f_s(x)^2f_s(x)= f_s(x^2)f_s(x)\\
            &=(x^{6\sigma+8}+x^{-4}+x^2)(x^{3\sigma+4}+x^{-2}+x)\\
            &=(x^{9\sigma+12} + x^{6\sigma+9}) + (x^{6\sigma+6} + x^{3\sigma+6}) + (x^{3\sigma} + x^3) + (x^{-6} + x^{-3}) + 1\\
            &=((x^{6\sigma+9})^{\sigma} + x^{6\sigma+9}) + ((x^{3\sigma+6})^{\sigma} + x^{3\sigma+6}) + ((x^3)^{\sigma} + x^3) + ((x^{-3})^2 + x^{-3}) + 1\\
            &=g(x^{6\sigma+9} )+g( x^{3\sigma+6}) + g(x^3) + ((x^{-3})^2 + x^{-3}) +1
\end{align*} 
But $g(g(x))= x^2 + x$. So,
\[
(x^{-3})^2 + x^{-3}=g(x^{-3} +x^{-3\sigma}),
\]
and hence 
\[
f_s(x)^3=g(x^{6\sigma+9}) +g( x^{3\sigma+6}) +g( x^3) +g(x^{-3} +(x^{-3})^{\sigma})+1.
\]
Since $g(x)$ is a linear function, we have
 \begin{align*}
f_s(x)^3&=g(x^{6\sigma+9} + x^{3\sigma+6} + x^3 + (x^{-3} +(x^{-3})^{\sigma})) +1\\
            &=g(x^{6\sigma+9} + x^{3\sigma+6} + x^3 + x^{-3} +x^{-3\sigma}) +1\\
            &= g(h_s(x^{3\sigma+3})) + 1.\\
\end{align*} 
 This proves (i). 

To verify (ii), it suffices to multiply  both sides of the equation by  $R(x)^{\sigma+1}$ and then expand them.
\end{proof} 
Now, we can prove the main theorem of this subsection.
\begin{theorem}
 Let $m$ be an odd integer and $f_s(x)=x^{3\sigma+4}+x^{-2}+x$. Then $D(f_s)^{*}$ is a differece set with Singer parameters $(2^m-1,2^{m-1},2^{m-2})$ in $\ftwomstar$.
\end{theorem}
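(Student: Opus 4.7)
The plan is to follow the blueprint of the previous subsection (Theorems \ref{EndCase3} and \ref{EndCase4}) with cubes in place of $(\sigma+1)$-th powers. By Lemma \ref{SecondEqualities}, we have
\[f_s(x)^3 = g(h_s(x^{3\sigma+3})) + 1 = g\bigl(h(R(x^{3\sigma+3}))\bigr) + 1.\]
For odd $m$ we have $2^m \equiv 2 \pmod 3$, hence $\gcd(3, 2^m-1) = 1$; combined with $\gcd(\sigma+1, 2^m-1) = 1$ (used already at the end of the proof of Theorem \ref{EndCase3}), this yields $\gcd(3\sigma+3, 2^m-1) = 1$, so $x \mapsto x^{3\sigma+3}$ is a permutation of $\ftwomstar$. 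Since $R = Q^{-1}$ is itself a bijection of $\ftwomstar$ by Lemma \ref{equalities}(ii), the substitution $v = R(x^{3\sigma+3})$ is a bijection, and consequently
\[(D(f_s)^*)^3 = \{\, g(h(v)) + 1 : v \in \ftwomstar \,\} \setminus \{0\}.\]

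The key technical step is the $(0,1,4)$-map property, which directly parallels the argument in Theorem \ref{EndCase3}. For $a \in \ftwomstar$, $|P_{f_s}(a)|$ equals the number of $v \in \ftwomstar$ solving $g(h(v)) = a^3 + 1$. Since $\Tr(g(y)) = 0$ for every $y$ by Lemma \ref{gLemma}(i), a solution exists only when $\Tr(a^3) = 1$; in that case Lemma \ref{gLemma}(iv) provides a unique $\alpha$ with $\Tr(\alpha) = 1$ and $P_g(a^3+1) = \{\alpha,\,\alpha+1\}$, whence
\[|P_{f_s}(a)| = |P_h(\alpha)| + |P_h(\alpha+1)|.\]
The identity $H_1 = T_1$ of Theorem \ref{hLemma} then gives $|P_h(\alpha)| = 1$ and $|P_h(\alpha+1)| \in \{0, 3\}$, so $|P_{f_s}(a)| \in \{1, 4\}$. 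Because cubing permutes $\ftwomstar$, the set $\{a \in \ftwomstar : \Tr(a^3) = 1\}$ has size $|T_1| = 2^{m-1}$, which forces $(D(f_s)^*)^3 = T_1$ and $|D(f_s)^*| = 2^{m-1}$.

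The conclusion is then immediate: by Corollary \ref{Trace-DiffSet}, $T_1$ is a difference set in $\ftwomstar$ with Singer parameters $(2^m-1, 2^{m-1}, 2^{m-2})$, and since $\gcd(3, 2^m-1) = 1$, Theorem \ref{Complementary}(ii) applied with the exponent equal to the multiplicative inverse of $3$ modulo $2^m-1$ shows that $D(f_s)^* = T_1^{3^{-1}}$ is a difference set with the same Singer parameters. The main point requiring care is the trace bookkeeping: one must confirm that $g \circ h$ really lands in the trace-zero set and that the $\alpha$ versus $\alpha+1$ dichotomy produces the claimed $(0,1,4)$ pattern; modulo these checks, the proof is essentially a mechanical transcription of Theorem \ref{EndCase4}, so no genuinely new obstacle is expected.
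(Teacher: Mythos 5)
Your proof is correct and follows essentially the same route as the paper: both hinge on the functional equation of Lemma~\ref{SecondEqualities} to reduce the claim to the identity $(D(f_s)^{*})^{3}=T_1$ and then conclude via Corollary~\ref{Trace-DiffSet} and Theorem~\ref{Complementary}. The only difference is that the paper obtains $(D(f_s)^{*})^{3}=T_1$ by observing $D(h)^*=D(h_s)^*$ and hence $(D(f_s)^{*})^{3}=(D(f)^{*})^{\sigma+1}$, which was already computed in Theorem~\ref{EndCase3}, whereas you re-run the preimage-counting argument (via Lemmas~\ref{gLemma} and Theorem~\ref{hLemma}) for $f_s$ directly --- a harmless inlining that additionally makes the $(0,1,4)$-map property of $f_s$ explicit.
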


\begin{proof}
 By Lemma \ref{equalities}, $R(Q(x)^{-1})=x$ whenever $x\neq 0$ . Therefore, $R(x)$ is a permuatation polynomial over $\ftwom$. Using this fact, from Part (ii) of Lemma \ref{SecondEqualities}, it follows that \[D(h)^{*}=D(h_s)^{*}.\] From this fact, the fact that 
 \[f(x)^{\sigma + 1} = g(h(x^{-\frac{\sigma +1}{2}})) + 1\] by Part (i) of the Lemma  \ref{equalities}, and the fact that  \[f_s(x)^3 = g(h_s(x^{3\sigma +3})) + 1\] by  Part (ii) of Lemma \ref{SecondEqualities}, we deduce that \[(D(f)^{*})^{\sigma+1}=(D(f_s)^{*})^3.\] 
 Using Theorem \ref{EndCase3}, this implies that \[(D(f_s)^{*})^3=T_1.\] 
 Therefore byTheorem \ref{Complementary}, $D(f_s)^{*}$ is a differece set with Singer parameters $(2^m-1,2^{m-1},2^{m-2})$ in $\ftwomstar$.
\end{proof}

\section{Tri-weight codes from trinomials}\label{Codes}
A binary linear code $\mathcal{C}$ of length $n$ and dimension $k$ is a  $k$-dimensional subspace of the $n$-dimensional vector space $\mathbb{F}_2^n$ . Any vector $c$ in $\mathcal{C}$ is called a codeword, and its (Hamming) weight denoted by $wt(c)$ is the number of nonzero components of it. The Hamming distance between the two codewords $c_1$ and $c_2$ is $wt(c_1+c_2)$. The minimum Hamming distance of a binary linear code $\mathcal{C}$ is the minimum of the weights of the nonzero codewords of $\mathcal{C}$. A binary linear code $\mathcal{C}$ of length $n$, dimension $k$, and minimum Hamming distance $d$ is called an $[n,k,d]$-code.

The dual code $\mathcal{C}^\perp$  of a binary linear $[n,k,d]$-code $\mathcal{C}$ is an $n-k$-dimensional subspace of  $\mathbb{F}_2^n$  given by 
\[\mathcal{C}^{\perp}=\left\{u \in \mathbb{F}_2^{n} |
 u\cdot c=0 \text{ for all }c \in \mathcal{C}\right\}\]
 where $u\cdot c$ denotes the inner product between the two vectors $u$ and $c$.
 
 The weight enumerator polynomial of a linear code is used to encode the information about the Hamming weights of the codewords of it. If $\mathcal{C}$ is a binary linear $[n,k,d]$-code, and for each $i$, $A_i$ is the number of codewords of it of weight $i$, then the weight enumerator polynomial of $\mathcal{C}$ is 
 \[
 W_{\mathcal{C}}(z)=1 +A_1z +A_2z^2 + \dots + A_nz^n.
 \]

In some applications including the construction of secret sharing schemes,  association schemes and authentication codes, it is desirable to have codes with very sparse weight-enumerator polynomial,  and hence there is an interest in the construction of these codes. 

 In one of the methods available in the literature, for the  construction of  binary linear codes with sparse weight enumerator polynomial, one  associates a linear code $\mathcal{C}_D$  of length $n$ over $\mathbb{F}_2$ to a properly chosen subset $D=\{d_1,d_2,\ldots,d_n\}$ of $\ftwom$ by
\begin{equation}\label{Code-set}
\mathcal{C}_D=\{(Tr(xd_1), Tr(xd_2),\ldots, Tr(xd_n): x\in \ftwom\}.
\end{equation}
For example, in~\cite{Ding-Discrete,DingCode,DingLuo,DingHarald}, Cun Sheng Ding successfully applied this method and obtained linear codes with very sparse weight enumerator polynomial and also made some conjectures. One of the conjectures Cun Sheng Ding made in~\cite{Ding-Discrete} is the following conjecture  whose partial resolution is the subject of this sectoin.

\begin{conj}\label{Ding-conj-codes}\cite{Ding-Discrete}
	Let $m\ge 5$ be an odd integer. Then for every $f$ given in Theorem~\ref{Ding-conj-diff}, the binary linear code $\mathcal{C}_{D(f)^*}$ is a $[2^{m-1}, m, 2^{m-2}-2^{(m-3)/2}]$-code with weight enumerator polynomial
	\[
	1+(2^{m-2}-2^{(m-3)/2})z^{2^{m-2}-2^{(m-3)/2}}+(2^{m-1}-1)z^{2^{m-2}}+(2^{m-2}+2^{(m-3)/2})z^{2^{m-2}+2^{(m-3)/2}}.
	\]
Furthermore,	The dual code of $\mathcal{C}_{D(f)^*}$ is  a $[2^{m-1},2^{m-1}-m,3]$-code.
	
\end{conj}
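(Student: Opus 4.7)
The plan is to express each codeword weight as an additive character sum and then reduce that sum, via the structural identifications of $D(f)^*$ established in Section~\ref{Main-result-proof}, to the Walsh transform of a Gold power function. Enumerating $D(f)^*=\{d_1,\dots,d_n\}$ with $n=2^{m-1}$ and $c_x=(\Tr(xd_1),\dots,\Tr(xd_n))$ for $x\in\ftwom$, the standard counting identity gives
\[
wt(c_x)=2^{m-2}-\tfrac{1}{2}S_D(x),\qquad S_D(x):=\sum_{d\in D(f)^*}(-1)^{\Tr(xd)}.
\]
So the conjectured weight enumerator is equivalent to the statement that $S_D(x)\in\{0,\pm 2^{(m-1)/2}\}$ for $x\ne 0$ with multiplicities $2^{m-1}-1$, $2^{m-2}+2^{(m-3)/2}$, $2^{m-2}-2^{(m-3)/2}$.

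For the representative $f(x)=x^{-\sigma/2}+x^{-(\sigma-1)/2}+x$, Theorem~\ref{EndCase3} provides the key identity $(D(f)^*)^{\sigma+1}=T_1$, equivalently $d\in D(f)^*$ if and only if $\Tr(d^{\sigma+1})=1$. Substituting the indicator $\tfrac{1}{2}(1-(-1)^{\Tr(d^{\sigma+1})})$ into the weight expression and expanding yields, for $x\ne 0$,
\[
wt(c_x)=\tfrac{1}{4}\bigl(2^m+W(x)\bigr),\qquad W(x)=\sum_{d\in\ftwom}(-1)^{\Tr(d^{\sigma+1}+xd)},
\]
where $W(x)$ is the Walsh transform of the Gold power map $d\mapsto d^{2^{(m+1)/2}+1}$. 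Because $\gcd(\tfrac{m+1}{2},m)=1$ for odd $m$, the classical three-valued Walsh spectrum for Gold APN functions gives $W(x)\in\{0,\pm 2^{(m+1)/2}\}$; the multiplicities $2^{m-1}$, $2^{m-2}\pm 2^{(m-3)/2}$ of these values are then pinned down by $\sum_x W(x)=2^m$ (since $F(0)=0$) together with Parseval $\sum_x W(x)^2=2^{2m}$ (a one-step linear system). Translating back via the weight identity reproduces the conjectured enumerator verbatim, and the representative $f(x)=x^{3\sigma+4}+x^{-2}+x$ is handled identically using the Gold exponent $3=2^1+1$ in place of $\sigma+1$, via the identity $(D(f)^*)^3=T_1$ proved in Section~\ref{Main-result-proof}.

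For $f(x)=x^{-4}+x^6+x$, Theorem~\ref{EndCase1} identifies $D(f)^*$ as the complement in $\ftwomstar$ of the hyperoval value-set $D(g)^*$ with $g(x)=x^6+x$. Using that $g$ is $2$-to-$1$ by Lemma~\ref{sextic-equation} and $g^{-1}(0)=\{0,1\}$, a short computation gives $S_D(x)=-\tfrac{1}{2}S_g(x)$ with $S_g(x):=\sum_{a\in\ftwom}(-1)^{\Tr(xg(a))}$; the substitution $a=w x^{-1/6}$ (well-defined because $\gcd(6,2^m-1)=1$ for odd $m$) combined with $\Tr(y)=\Tr(y^{1/2})$ converts $S_g(x)$ into the Walsh transform of $d\mapsto d^3$ evaluated at $x^{5/6}$, and the argument closes exactly as in the previous paragraph using the Gold spectrum for $k=1$. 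The representative $f(x)=x^{-48}+x^{20}+x^3$ is the genuinely subtle case: although Section~\ref{Main-result-proof} shows $(D(f)^*)^{17}$ and the Dillon--Dobbertin set $D(g)^*$ with $g(x)=(x+1)^{241}+x^{241}+1$ partition $\ftwomstar$, the extraneous $17$-th power blocks the direct passage to a single Walsh transform of a Gold function, and this is precisely where the resolution remains only partial.

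The remaining claims are immediate from the character-sum analysis. The dimension of $\mathcal{C}_{D(f)^*}$ equals $m$ because $|S_D(x)|\le 2^{(m-1)/2}<2^{m-1}=|D(f)^*|$ for every $x\ne 0$, which forces the trace-evaluation map $x\mapsto c_x$ to be injective. The dual code $\mathcal{C}_{D(f)^*}^{\perp}=\{u\in\ftwo^{n}:\sum_i u_i d_i=0\}$ therefore has length $2^{m-1}$ and dimension $2^{m-1}-m$; its minimum distance is at least $3$ because the $d_i$ are distinct nonzero elements of $\ftwom$, and equals $3$ because the Fourier count
\[
|\{(d_1,d_2,d_3)\in(D(f)^*)^{3}:d_1+d_2+d_3=0\}|=\tfrac{1}{2^m}\sum_{x\in\ftwom}S_D(x)^3=2^{2m-3}-2^{m-2}
\]
is strictly positive for $m\ge 5$, and no such triple can have a repeated coordinate (that would force one $d_i$ to be $0$, which is excluded from $D(f)^*$). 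The main obstacle to a complete proof is the Dillon--Dobbertin case in the third paragraph; matching up the $17$-th power of $D(f)^*$ with a single Walsh transform appears to require either a deeper algebraic identity or a finer exponential-sum bound beyond the Gold-function template used for the other three classes.
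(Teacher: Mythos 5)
Your proposal is correct in substance and covers exactly the same ground as the paper: both resolve the conjecture for $f_3,\dots,f_{11}$ and both leave the class of $x^{-48}+x^{20}+x^3$ (hence $f_1,f_2$) open, for essentially the reason you identify. The two arguments share the same skeleton --- reduce via Theorem~\ref{PartitionPolys} to three representatives, then use the identifications $D(f)^*=T_{\sigma+1}$, $D(f_s)^*=T_3$, and $D(x^{-4}+x^6+x)^*=\ftwomstar\setminus D_6$ coming from Section~\ref{Main-result-proof} --- but diverge afterwards. The paper at that point simply cites known weight enumerators: Theorem~\ref{Segre} (Ding's result for the hyperoval set $D_6$) together with the complementation Lemma~\ref{ComplementCode} for the first representative, and Theorem~\ref{PowersOf2DifferenceSets} for $T_{2^k+1}$ with $k=\tfrac{m+1}{2}$ and $k=1$ for the other two; the dual parameters are then extracted from the Pless power moments (Theorem~\ref{DualCodeParameters}). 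You instead recompute everything from the classical three-valued Gold spectrum: your identity $wt(c_x)=\tfrac14\bigl(2^m+W(x)\bigr)$ and the moment computations check out (I verified $\sum_x W(x)=2^m$, Parseval, and your triple count $\tfrac{1}{2^m}\sum_x S_D(x)^3=2^{2m-3}-2^{m-2}$, which agrees with the paper's $A_3^{\perp}=(2^{2m-4}-2^{m-3})/3$ after dividing by $6$), and your Fourier argument for $d(\mathcal{C}^\perp)=3$ is a clean, self-contained substitute for the power-moment identities. What your route buys is independence from the cited enumerator theorems (which are themselves proved via the same Gold spectrum, so nothing is circular); what the paper's route buys is brevity. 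One small slip: in your second paragraph you list the multiplicities of $S_D(x)=+2^{(m-1)/2}$ and $-2^{(m-1)/2}$ in the wrong order (the value $+2^{(m-1)/2}$ must occur $2^{m-2}-2^{(m-3)/2}$ times, as forced by $\sum_{x\neq 0}S_D(x)=-2^{m-1}$); this is harmless since your later derivation of $N_\pm$ from the first two moments of $W$ produces the correct assignment.
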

We need some preliminaries which are the subjects of the next subsection.
\subsection{Weight distribution of codes and weight enumerator polynomials}
\subsubsection{Pless power moment identities}
The identities in the following theorem between the weight distributions of a code and its dual are called Pless power moment identities which can be derived from Mac-Williams identities~\cite{Mac-Williams}.  

\begin{theorem}\cite{Pless-moments}\label{VeraPlessPowerMoments}
Let $\mathcal{C}$ be a binary linear code of length $n$ and dimension $k$. Furthermore,  let $A_i$ and $A_{i}^{\perp}$ be the number of code-words of weight $i$ in $\mathcal{C}$ and $\mathcal{C}^{\perp}$, respectively. Then 
\begin{itemize}
\item[(i)] $\sum_{i=0}^{n} A_i = 2^k$,
\item[(ii)] $\sum_{i=0}^{n} iA_i = 2^{k-1}(n-A_{1}^{\perp})$,
\item[(iii)] $\sum_{i=0}^{n} i^2A_i = 2^{k-2}(n(n+1)-2nA_{1}^{\perp}+2A_{2}^{\perp})$, and
\item[(iv)] $\sum_{i=0}^{n} i^3A_i = 2^{k-3}(n^2(n+3)-(3n^2+3n-2)A_{1}^{\perp}+6nA_{2}^{\perp}-6A_{3}^{\perp}).$
\end{itemize} 
\end{theorem}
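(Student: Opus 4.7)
The plan is to derive the four identities in parallel by a two-way computation of the moments
\[
N_t := \sum_{c \in \mathcal{C}}(n-2\,\mathrm{wt}(c))^t
\]
for $t=0,1,2,3$. The strategy is to compute $N_t$ first via character orthogonality and then via the binomial expansion of $(n-2i)^t$; equating the two gives, for each $t$, a linear equation among $S_0,S_1,\ldots,S_t$ where $S_j := \sum_{i} i^j A_i$, and solving these equations successively produces (i)--(iv).

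For the character-theoretic computation, I would use the elementary identity $n-2\,\mathrm{wt}(c)=\sum_{j=1}^{n}(-1)^{c_j}$, expand the $t$-th power, and exchange sums to obtain
\[
N_t = \sum_{(j_1,\ldots,j_t)}\sum_{c \in \mathcal{C}}(-1)^{c_{j_1}+\cdots+c_{j_t}} = 2^k\cdot N_t^{\perp},
\]
where $N_t^{\perp}$ denotes the number of ordered tuples $(j_1,\ldots,j_t) \in \{1,\ldots,n\}^t$ for which $e_{j_1}+\cdots+e_{j_t} \in \mathcal{C}^{\perp}$. The inner reduction is the standard orthogonality relation: the character sum equals $|\mathcal{C}|=2^k$ exactly when the indicator sum lies in $\mathcal{C}^{\perp}$, and vanishes otherwise. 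For the binomial-expansion computation, one immediately has $N_t = \sum_{j=0}^{t}\binom{t}{j}(-2)^j n^{t-j} S_j$.

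Next, I would enumerate $N_t^{\perp}$ for $t \le 3$ by splitting on the equality pattern of the indices: $N_0^{\perp}=1$ from the empty sum; $N_1^{\perp}=A_1^{\perp}$; $N_2^{\perp}=n+2A_2^{\perp}$, since the $n$ diagonal tuples $(j,j)$ always contribute and off-diagonal pairs correspond to ordered weight-two dual codewords; and $N_3^{\perp}=(3n-2)A_1^{\perp}+6A_3^{\perp}$, obtained by combining the three equality patterns, namely ``all three equal'' contributing $A_1^{\perp}$, ``exactly two equal'' contributing $3A_1^{\perp}(n-1)$ (after choosing which two indices coincide, noting that the odd-one-out must be a weight-one dual position while the repeated value ranges freely over the other $n-1$ indices), and ``all three distinct'' contributing $6A_3^{\perp}$. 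Equating the two expressions for $N_t$ and substituting the previously derived identities gives, in order, (i) directly, (ii) via (i), (iii) via (i) and (ii), and (iv) via (i), (ii) and (iii).

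The main obstacle is the case analysis at $t=3$, which requires some care: the coefficient $6nA_2^{\perp}$ appearing in (iv) does \emph{not} come from the tuple count $N_3^{\perp}$, which contains no $A_2^{\perp}$, but emerges when identity (iii) is substituted into the binomial expansion of $N_3$. One must also be diligent to work with \emph{ordered} tuples throughout and to correctly collapse $e_{j_1}+e_{j_2}+e_{j_3}$ to a single unit vector when exactly two of the indices coincide; beyond this bookkeeping, the remaining steps amount to routine algebraic rearrangement of the four linear relations.
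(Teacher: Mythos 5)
Your argument is correct. Note, however, that the paper offers no proof of this statement at all: it is quoted verbatim from Pless's 1963 paper, with the remark that the identities ``can be derived from Mac-Williams identities.'' So the comparison is between your self-contained derivation and the classical route through the MacWilliams transform. Your approach computes the moments $N_t=\sum_{c\in\mathcal{C}}(n-2\,\mathrm{wt}(c))^t$ twice --- once by character orthogonality, giving $N_t=2^k N_t^{\perp}$ with $N_t^{\perp}$ the number of ordered $t$-tuples of coordinates whose indicator sum lies in $\mathcal{C}^{\perp}$, and once by binomial expansion in the $S_j=\sum_i i^jA_i$ --- and then solves the resulting triangular system. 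This is in effect a direct extraction of the first four Pless moments without ever writing down the full weight-enumerator transform, and all the delicate points check out: the collapse $e_{j_1}+e_{j_2}+e_{j_3}=e_{j_3}$ when exactly two indices coincide, the count $N_3^{\perp}=A_1^{\perp}+3(n-1)A_1^{\perp}+6A_3^{\perp}=(3n-2)A_1^{\perp}+6A_3^{\perp}$, and your correct observation that the $6nA_2^{\perp}$ term in (iv) enters only through the back-substitution of (iii) rather than from the tuple count. What the MacWilliams route buys is uniformity (all power moments at once, via Krawtchouk polynomials); what your route buys is that for $t\le 3$ everything reduces to elementary counting of ordered tuples, which is exactly the level of generality the paper needs in Theorem \ref{DualCodeParameters}.
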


Applying the above theorem to some tri-weight linear codes we have the following theorem.

\begin{theorem}\label{DualCodeParameters}
Let $m\ge 5$ be an odd integer, and $\mathcal{C}$ be a binary linear $[2^{m-1}, m, 2^{m-2}-2^{(m-3)/2}]$- code with  weight enumerator polynomial
	\[
	1+(2^{m-2}-2^{(m-3)/2})z^{2^{m-2}-2^{(m-3)/2}}+(2^{m-1}-1)z^{2^{m-2}}+(2^{m-2}+2^{(m-3)/2})z^{2^{m-2}+2^{(m-3)/2}}.
	\]
Then $\mathcal{C}^{\perp}$ is a  $[2^{m-1},2^{m-1}-m,3]$-code.
\end{theorem}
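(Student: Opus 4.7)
The length of $\mathcal{C}^\perp$ equals that of $\mathcal{C}$, namely $2^{m-1}$, and its dimension is $2^{m-1}-m$. So I only need to pin down the minimum distance, and the plan is to show it equals exactly $3$. Writing $A_i^\perp$ for the number of weight-$i$ codewords of $\mathcal{C}^\perp$, this amounts to proving $A_1^\perp = A_2^\perp = 0$ and $A_3^\perp > 0$.

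The strategy is to apply the Pless power moment identities of Theorem~\ref{VeraPlessPowerMoments} to $\mathcal{C}$ with $n = 2^{m-1}$ and $k = m$, using the given tri-weight enumerator to evaluate $\sum_i i^j A_i$ for $j=1,2,3$. For cleaner bookkeeping I will abbreviate $a = 2^{m-2}$ and $b = 2^{(m-3)/2}$, noting the key identity $b^2 = a/2$. The three nonzero weights are then $a-b,\ a,\ a+b$ with multiplicities $a-b,\ 2a-1,\ a+b$ respectively, and the symmetric pairing of the weights $a\pm b$ kills the odd powers of $b$ when one sums $(a-b)^{j+1}+(a+b)^{j+1}$, reducing each moment to a polynomial in $a$ alone after substituting $b^2 = a/2$.

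Carrying this out gives
\[
\sum_{i} iA_i = 4a^2 = 2^{2m-2}, \qquad \sum_{i} i^2 A_i = 4a^3 + 2a^2 = 2^{3m-4}+2^{2m-3},
\]
\[
\sum_{i} i^3 A_i = 4a^4 + 5a^3 + \tfrac{1}{2}a^2 = 2^{4m-6}+5\cdot 2^{3m-6}+2^{2m-5}.
\]
Feeding these into identities (ii) and (iii) of Theorem~\ref{VeraPlessPowerMoments}, one checks that the right-hand sides match the left-hand sides exactly when $A_1^\perp = 0$ and then $A_2^\perp = 0$. Substituting those values into identity (iv) and solving the resulting linear equation for the single unknown $A_3^\perp$ yields
\[
A_3^\perp \;=\; \frac{2^{3m-6}-2^{2m-5}}{6\cdot 2^{m-3}} \;=\; \frac{2^{m-3}(2^{m-1}-1)}{3}.
\]
Since $m$ is odd, $m-1$ is even and $2^{m-1}\equiv 1 \pmod 3$, so $A_3^\perp$ is a positive integer. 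Combined with $A_1^\perp = A_2^\perp = 0$ this forces the minimum distance of $\mathcal{C}^\perp$ to be exactly $3$.

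The argument is entirely mechanical; there is no conceptual obstacle, only the bookkeeping of powers of $2$ in the three Pless moments together with the elementary divisibility $3\mid 2^{m-1}-1$ at the end. The hypothesis $m\ge 5$ is used only to guarantee that the exponent $(m-3)/2$ is a positive integer, so that $b$ is well defined and the three weights are distinct.
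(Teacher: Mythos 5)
Your proof is correct and follows the same route as the paper: apply the Pless power moment identities (ii)--(iv) to the given tri-weight distribution, deduce $A_1^\perp=A_2^\perp=0$, and solve for $A_3^\perp$, which you compute as $\frac{2^{m-3}(2^{m-1}-1)}{3}$ — identical to the paper's $\frac{2^{2m-4}-2^{m-3}}{3}$. The only difference is that you supply the explicit moment calculations that the paper omits.
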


\begin{proof}
Let $A_i$ and $A_{i}^{\perp}$ be the number of codewords of weight $i$ in $\mathcal{C}$ and $\mathcal{C}^{\perp}$, respectively.
By Parts (ii) and (iii) of \ref{VeraPlessPowerMoments}, $A_{1}^{\perp}$ and $A_{2}^{\perp}$ are zero and by Part (iv) we have $A_{3}^{\perp}=\frac{2^{2m-4}-2^{m-3}}{3}>0$. So, the minimum Hamming distance of $\mathcal{C}^{\perp}$ is 3.
\end{proof}

\subsection{Weight distribution of codes constructed from Boolean functions and Walsh transform}

In this subsection, we gather and prove some results which relate the weight distribution of codes constructed from Boolean functions to their Walsh transform. First, we need some definitions.

\begin{definition}
Let $f$ be a function from $\ftwom$ to $\mathbb{F}_2$. The support $S_f$ of $f$ is the set of all the elements of $\ftwom$ which are mapped to 1, that is,
\[S_f=\{x \in \ftwom : f(x)=1\}.\]
\end{definition}
\begin{definition}
Let $f$ be a function from $\ftwom$ to $\mathbb{F}_2$. The Walsh transform of $f$ is the function $\hat{f}:\ftwom\longmapsto \mathbb{Z}$ where  for each  $w \in \ftwom$, $\hat{f}(w)$ is given by
\[\hat{f}(w)=\sum_{x \in \ftwom} (-1)^{f(x)+\Tr (wx)}.\]
\end{definition}

The following theorem relates  the Walsh transform of a function to the weight distribution of a code constructed from its support as in~\eqref{Code-set}.
\begin{theorem}~\cite{DingCode}\label{CodeMaker}
Let $f$ be a function from $\ftwom$ to $\mathbb{F}_2$. Furthermore, let $n_f$ denote the size of the support of $f$, that is, $n_f=|S_f|$. If $2n_f + \hat{f}(w)\neq 0$ for all $w \in \ftwom$, then $\mathcal{C}_{S_f}$ is a binary linear code with length $n_f$ and dimension $m$, and its weight distribution is given by the following multiset:
\[\left\{\left\{\frac{2n_f+\hat{f}(w)}{4} : w \in \ftwom\right\}\right\} \cup \{0\}.\] 
\end{theorem}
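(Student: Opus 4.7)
\medskip

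\noindent\textbf{Proof plan for Theorem~\ref{CodeMaker}.}
The plan is to compute the Hamming weight of a generic codeword $c_w$ directly in terms of the exponential sum that defines $\hat{f}(w)$, and then read off both the dimension and the weight distribution from the resulting identity. Write $S_f=\{d_1,\dots,d_{n_f}\}$, so the codeword indexed by $w\in\ftwom$ is
\[
c_w=\bigl(\Tr(wd_1),\Tr(wd_2),\dots,\Tr(wd_{n_f})\bigr).
\]
The map $w\mapsto c_w$ is $\mathbb{F}_2$-linear by additivity of the trace, so $\mathcal{C}_{S_f}$ is automatically a linear code of length $n_f$; only its dimension and weight distribution need analysis.

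The main identity comes from switching between $\{0,1\}$-valued and $\{\pm 1\}$-valued sums. Using the standard trick $(-1)^{\Tr(wd)}=1-2\cdot\Tr(wd)$ when $\Tr(wd)\in\{0,1\}$, I obtain
\[
\mathrm{wt}(c_w)=\sum_{d\in S_f}\Tr(wd)=\frac{n_f}{2}-\frac{1}{2}\sum_{d\in S_f}(-1)^{\Tr(wd)}.
\]
On the other hand, splitting the defining sum for $\hat{f}(w)$ according to $f(x)=0$ or $f(x)=1$ gives
\[
\hat{f}(w)=\sum_{x\in\ftwom}(-1)^{\Tr(wx)}-2\sum_{d\in S_f}(-1)^{\Tr(wd)}.
\]
For $w\neq 0$ the inner $\ftwom$-sum vanishes by orthogonality of additive characters, so $\sum_{d\in S_f}(-1)^{\Tr(wd)}=-\hat{f}(w)/2$, and substituting back yields
\[
\mathrm{wt}(c_w)=\frac{2n_f+\hat{f}(w)}{4}\qquad\text{for all }w\in\ftwom\setminus\{0\}.
\]
For $w=0$ we trivially get $c_0=\mathbf{0}$, contributing the weight $0$ to the multiset.

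Having established the weight formula, the dimension claim follows from the linearity of $w\mapsto c_w$: its kernel is exactly $\{w:c_w=\mathbf{0}\}=\{w:\mathrm{wt}(c_w)=0\}$. The hypothesis $2n_f+\hat{f}(w)\neq 0$ for every $w\in\ftwom$ forces $\mathrm{wt}(c_w)>0$ for $w\neq 0$, so the kernel is trivial and $\dim \mathcal{C}_{S_f}=m$. Combining this with the weight formula gives the claimed multiset weight distribution (the isolated $\{0\}$ accounts for the zero codeword coming from $w=0$).

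I do not expect a substantive obstacle here: the only delicate point is making sure $w=0$ is handled separately so that the orthogonality of characters can be applied to simplify $\hat{f}(w)$, and then observing that the non-vanishing hypothesis is equivalent to the injectivity of $w\mapsto c_w$.
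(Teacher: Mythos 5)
Your proof is correct; the paper does not prove this statement but imports it from \cite{DingCode}, and your character-sum computation (converting $\mathrm{wt}(c_w)$ to $\sum_{d\in S_f}(-1)^{\Tr(wd)}$, splitting $\hat{f}(w)$ over $f(x)=0$ and $f(x)=1$, and invoking orthogonality for $w\neq 0$) is precisely the standard argument given there. The only point worth noting is that your derivation shows the weight multiset should be indexed by $w\in\ftwomstar$ with the extra $\{0\}$ coming from $w=0$ (for $w=0$ the formula gives $2^{m-2}$, not $0$), which is the correct reading of the statement.
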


The following lemma relates the weight enumerators of codes constructed from a set and its complement.
\begin{lemma}\label{ComplementCode}
Let $A$ be a subset of $\ftwom$ with $|A|=2^{m-1}$ and  $0 \in A$ and let $B=\ftwom \backslash A$. Suppose that $\mathcal{C}_{A \backslash \{0\}}$ is a linear $[2^{m-1}-1, m, 2^{m-2}-2^{(m-3)/2}]$-code with weight enumerator
	\[
	1+(2^{m-2}+2^{(m-3)/2})z^{2^{m-2}-2^{(m-3)/2}}+(2^{m-1}-1)z^{2^{m-2}}+(2^{m-2}-2^{(m-3)/2})z^{2^{m-2}+2^{(m-3)/2}}
	.\]
	Then  $\mathcal{C}_B$ is a linear $[2^{m-1}, m, 2^{m-2}-2^{(m-3)/2}]$-code with weight enumerator
	\[
	1+(2^{m-2}-2^{(m-3)/2})z^{2^{m-2}-2^{(m-3)/2}}+(2^{m-1}-1)z^{2^{m-2}}+(2^{m-2}+2^{(m-3)/2})z^{2^{m-2}+2^{(m-3)/2}}
	.\]
\end{lemma}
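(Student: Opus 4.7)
The plan is to exploit the complementarity $B = \ftwom \setminus A$ together with the balancedness of the absolute trace. For each $x \in \ftwom$ let $c_A(x)$ and $c_B(x)$ denote the codewords of $\mathcal{C}_{A\setminus\{0\}}$ and $\mathcal{C}_B$, respectively, indexed by $x$. Since $y \mapsto \Tr(xy)$ is a nonzero $\ftwo$-linear functional on $\ftwom$ whenever $x \ne 0$, the hyperplane identity gives
\[
|\{y \in \ftwom : \Tr(xy) = 1\}| = 2^{m-1} \qquad \text{for every nonzero } x \in \ftwom.
\]
Using $A \sqcup B = \ftwom$ together with $\Tr(x \cdot 0) = 0$ (which is why restricting $A$ to $A\setminus\{0\}$ does not change the count of trace-one positions), this yields the key weight relation
\[
\wt(c_A(x)) + \wt(c_B(x)) = 2^{m-1} \qquad \text{for all nonzero } x,
\]
while $c_A(0) = c_B(0) = 0$.

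Next I would observe that, since $\mathcal{C}_{A\setminus\{0\}}$ has dimension $m$, the linear map $x \mapsto c_A(x)$ is a bijection from $\ftwom$ onto $\mathcal{C}_{A\setminus\{0\}}$, so the hypothesized weight enumerator records exactly the frequencies with which each weight is attained as $x$ ranges over $\ftwom$. The involution $w \mapsto 2^{m-1} - w$ interchanges $2^{m-2} - 2^{(m-3)/2}$ with $2^{m-2}+2^{(m-3)/2}$ and fixes $2^{m-2}$, so transferring the frequencies through this involution while keeping the single zero codeword contributed by $x = 0$ produces exactly the claimed weight enumerator for $\mathcal{C}_B$.

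Finally I would verify the parameters. The length $|B| = 2^m - 2^{m-1} = 2^{m-1}$ is immediate, and the minimum distance $2^{m-2} - 2^{(m-3)/2}$ is the smallest positive exponent appearing in the enumerator. For the dimension, since all three nonzero weights lie strictly between $0$ and $2^{m-1}$ for $m \ge 5$, no nonzero $x$ produces the zero codeword in $\mathcal{C}_B$, so the linear map $x \mapsto c_B(x)$ is injective and $\mathcal{C}_B$ has dimension $m$. There is no serious obstacle: the only delicate point is remembering that the identity $\wt(c_A(x)) + \wt(c_B(x)) = 2^{m-1}$ fails at $x = 0$, so the zero codeword must be tracked separately rather than as the image of the (non-existent) weight $2^{m-1}$ codeword in $\mathcal{C}_{A\setminus\{0\}}$.
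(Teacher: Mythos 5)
Your proof is correct and complete. It rests on the same underlying complementation principle as the paper's argument, but it reaches it by a more elementary route: the paper proves the Walsh-transform identity $\widehat{\chi_A}(w)=-\widehat{\chi_B}(w)$ for the characteristic functions of $A$ and $B$ and then invokes Theorem~\ref{CodeMaker} twice (once in reverse, to read off the Walsh spectrum of $\chi_A$ from the hypothesized weight enumerator, and once forward, to convert the negated spectrum into the weight distribution of $\mathcal{C}_B$), whereas you derive the equivalent weight relation $\wt(c_A(x))+\wt(c_B(x))=2^{m-1}$ for $x\neq 0$ directly from the hyperplane count $|\{y:\Tr(xy)=1\}|=2^{m-1}$. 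Unwinding the formula $\wt=\tfrac{2n_f+\hat f(w)}{4}$ in Theorem~\ref{CodeMaker} shows the two identities are literally the same statement, so nothing new is proved, but your version is self-contained and avoids the Walsh-transform machinery entirely. You also handle the two points the paper leaves implicit with appropriate care: that the coordinate at $0$ contributes nothing to any weight (so passing between $A$ and $A\setminus\{0\}$ is harmless), and that the dimension hypothesis makes $x\mapsto c_A(x)$ a bijection (so the weight enumerator really is the frequency count over $x\in\ftwom$), with the injectivity of $x\mapsto c_B(x)$, and hence $\dim\mathcal{C}_B=m$, recovered from the positivity of all three resulting weights.
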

\begin{proof}
For each $Y \subset \ftwom$, let $\chi_Y:\ftwom\longrightarrow \ftwo$ be the characteristic function of $Y$, that is, $\chi_Y(y)=1$ if and only if $y\in Y$. Then it is easy to check that for each $w \in \mathbb{F}_2^m$, we have $\widehat{\chi_A}(w)=-\widehat{\chi_B}(w)$.	 The rest of the proof is immediate from Theorem \ref{CodeMaker}.
\end{proof}

\subsubsection{Some known binary linear codes with sparse weight enumerator}
The following Theorem is a special case of Theorem 7 of \cite{DingCode} for $\rho=6$.
\begin{theorem}\label{Segre}
	Let
	\[D_6=\{x^6+x:x \in \ftwom\} \backslash \{0\}.\]
	Then $C_{D_6}$ is a linear $[2^{m-1}-1, m, 2^{m-2}-2^{(m-3)/2}]$-code with weight enumerator
	\[	1+(2^{m-2}+2^{(m-3)/2})z^{2^{m-2}-2^{(m-3)/2}}+(2^{m-1}-1)z^{2^{m-2}}+(2^{m-2}-2^{(m-3)/2})z^{2^{m-2}+2^{(m-3)/2}}
	.\]
\end{theorem}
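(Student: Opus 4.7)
\emph{Proof plan.} My strategy is to apply Theorem~\ref{CodeMaker} to the Boolean function $f := \chi_{D_6}$ and to evaluate the resulting Walsh transform via an explicit character sum. By Lemma~\ref{sextic-equation}(i) the map $\phi(x) = x^6 + x$ is $2$-to-$1$ on $\ftwom$; since $\phi(0) = \phi(1) = 0$, the value-set has size $2^{m-1}$ and contains $0$, so $n_f = |D_6| = 2^{m-1} - 1$. Collapsing the sum over $D_6$ via the $2$-to-$1$ property, for every $w \neq 0$ one obtains
\[
\hat{\chi}_{D_6}(w) \;=\; 2 - T(w), \qquad T(w) \;:=\; \sum_{x\in \ftwom}(-1)^{\Tr(w(x^6+x))},
\]
so by Theorem~\ref{CodeMaker} the weight of the codeword indexed by $w$ is $(2^m - T(w))/4$, and the whole problem reduces to the value distribution of $T$.

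The main step is evaluating $T(w)^2$. Writing $T(w)^2$ as a double sum over $(x,y)$, setting $y = x + z$, and using the characteristic-two identity $x^6 + (x+z)^6 = x^2 z^2 (x+z)^2 + z^6$ separates the variables:
\[
T(w)^2 \;=\; \sum_{z\in\ftwom}(-1)^{\Tr(w(z^6+z))}\sum_{x\in\ftwom}(-1)^{\Tr(w x^2 z^2 (x+z)^2)}.
\]
Two applications of $\Tr(u^2)=\Tr(u)$ reduce the inner exponent to the linear form $\Tr\bigl((w^{1/4}z^{1/2} + w^{1/2}z^2)x\bigr)$, so the inner sum is $2^m$ when $z^6 = w^{-1}$ and $0$ otherwise. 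Since $\gcd(6,2^m-1) = 1$ for odd $m$, a unique $z_0$ satisfies this, with $wz_0^6 = 1$ and $wz_0 = w^{5/6}$, yielding
\[
T(w)^2 \;=\; 2^m\bigl(1 - (-1)^{\Tr(w^{5/6})}\bigr).
\]
Hence $T(w) = 0$ for the $2^{m-1}-1$ values of $w \in \ftwomstar$ with $\Tr(w^{5/6}) = 0$, and $T(w) = \pm 2^{(m+1)/2}$ for the remaining $2^{m-1}$ values, where the bijectivity of $w \mapsto w^{5/6}$ on $\ftwomstar$ (valid since $\gcd(5,2^m-1) = \gcd(6,2^m-1) = 1$ for odd $m$) is used to transport trace counts.

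To split these $2^{m-1}$ values by sign, I would interchange summation in $\sum_{w}T(w) = \sum_x \sum_w (-1)^{\Tr(w(x^6+x))}$: the inner sum equals $2^m$ precisely when $x^6 + x = 0$, i.e.\ when $x \in \{0, 1\}$, so $\sum_w T(w) = 2^{m+1}$. Subtracting $T(0) = 2^m$ and letting $a, b$ count the $w$ achieving $+2^{(m+1)/2}$ and $-2^{(m+1)/2}$, the relations $a + b = 2^{m-1}$ and $2^{(m+1)/2}(a - b) = 2^m$ yield $a = 2^{m-2} + 2^{(m-3)/2}$ and $b = 2^{m-2} - 2^{(m-3)/2}$. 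Substituting into $(2^m - T(w))/4$ produces the three stated nonzero weights with the stated multiplicities, and the zero codeword contributes the leading $1$. Since $2n_f + \hat{\chi}_{D_6}(w) \in \{2^m, 2^m \pm 2^{(m+1)/2}\}$ is always nonzero, Theorem~\ref{CodeMaker} applies and pins the dimension at $m$ and the minimum distance at $2^{m-2} - 2^{(m-3)/2}$. The main obstacle is the $T(w)^2$ reduction: the clean collapse to a linear character sum hinges on the characteristic-two expansion of $(x+z)^6$ and the repeated use of $\Tr(u^2) = \Tr(u)$; once that identity is in place, the rest of the argument is essentially a first-moment calculation.
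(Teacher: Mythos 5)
Your proposal is correct, but it takes a genuinely different route from the paper for the simple reason that the paper does not prove this statement at all: Theorem~\ref{Segre} is quoted verbatim as a special case (the case $\rho=6$) of Theorem~7 of \cite{DingCode}, so the paper's ``proof'' is a citation. What you supply is a self-contained derivation by the standard squaring technique for Weil-type sums: the reduction of $\hat{\chi}_{D_6}(w)$ to $T(w)=\sum_x(-1)^{\Tr(w(x^6+x))}$ via the $2$-to-$1$ property of $x^6+x$ (Lemma~\ref{sextic-equation}(i)) is correct, the characteristic-two identity $x^6+(x+z)^6=x^2z^2(x+z)^2+z^6$ checks out, the linearization of the inner exponent via $\Tr(u^2)=\Tr(u)$ is valid, and the coprimality facts $\gcd(5,2^m-1)=\gcd(6,2^m-1)=1$ for odd $m$ hold, so the value distribution $T(w)\in\{0,\pm 2^{(m+1)/2}\}$ and the first-moment sign count give exactly the stated frequencies; the resulting weights $(2^m-T(w))/4$ and multiplicities match the claimed enumerator, and the nonvanishing of $2n_f+\hat{\chi}_{D_6}(w)$ legitimizes the appeal to Theorem~\ref{CodeMaker}. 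One small point to tighten: your verbal claim that the inner sum is $2^m$ ``when $z^6=w^{-1}$ and $0$ otherwise'' omits the case $z=0$, where the inner exponent vanishes identically and the inner sum is also $2^m$; your displayed formula $T(w)^2=2^m\bigl(1-(-1)^{\Tr(w^{5/6})}\bigr)$ does include that contribution (it is the leading $2^m$), so the computation is right, but the prose should say so explicitly. The trade-off between the two approaches is the usual one: the citation is shorter and places the result in the context of Maschietti's hyperoval construction, while your argument is elementary, verifiable within the paper, and makes transparent exactly where the oddness of $m$ and the coprimality conditions enter.
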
 

For each positive integer number $n$, let
\[T_n=\{x \in \ftwom| \Tr(x^n)=1\}.\]
The following Theorem is a special case of Corollary 7 of \cite{Ding-Discrete}.

\begin{theorem}\label{PowersOf2DifferenceSets}
	Let $m$ be an odd integer, and $k$ be a positive integer with $gcd(k,m)=1$. Then $T_{2^k+1}$ is a linear $[2^{m-1}-1, m, 2^{m-2}-2^{(m-3)/2}]$-code with weight enumerator
	\[	1+(2^{m-2}-2^{(m-3)/2})z^{2^{m-2}-2^{(m-3)/2}}+(2^{m-1}-1)z^{2^{m-2}}+(2^{m-2}+2^{(m-3)/2})z^{2^{m-2}+2^{(m-3)/2}}
	.\]
\end{theorem}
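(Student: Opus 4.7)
The plan is a direct Walsh-transform computation fed into Theorem~\ref{CodeMaker}. Set $f\colon\ftwom\to\{0,1\}$ by $f(x)=\Tr(x^{2^k+1})$, so its support is exactly $S_f=T_{2^k+1}$. The first step is to pin down $n_f=|T_{2^k+1}|$. Since $m$ is odd and $\gcd(k,m)=1$, we have $\gcd(2k,m)=1$, hence $\gcd(2^{2k}-1,2^m-1)=2^{\gcd(2k,m)}-1=1$; as $(2^k+1)\mid(2^{2k}-1)$, it follows that $\gcd(2^k+1,2^m-1)=1$, so $x\mapsto x^{2^k+1}$ permutes $\ftwom$. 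Consequently $n_f=|\{y\in\ftwom:\Tr(y)=1\}|=2^{m-1}$.

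The second step is to compute the Walsh transform
\[
\hat f(w)=\sum_{x\in\ftwom}(-1)^{\Tr(x^{2^k+1}+wx)},
\]
which is the classical Gold-function exponential sum. Under our hypotheses ($m$ odd, $\gcd(k,m)=1$) the quadratic Boolean function $x\mapsto\Tr(x^{2^k+1})$ is almost bent, with Walsh spectrum contained in $\{0,\pm\sigma\}$, where $\sigma=2^{(m+1)/2}$; I would cite this as the standard Gold spectrum, which is equivalent to the one-dimensionality of the radical of the associated symplectic form on $\ftwo^m$. Letting $N_0,N_+,N_-$ count the $w\in\ftwom$ with $\hat f(w)=0,\sigma,-\sigma$ respectively, three identities determine the counts: $\hat f(0)=2^m-2n_f=0$; the orthogonality identity $\sum_w\hat f(w)=2^m(-1)^{f(0)}=2^m$; and Parseval $\sum_w\hat f(w)^2=2^{2m}$. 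Solving gives
\[
N_+=2^{m-2}+2^{(m-3)/2},\qquad N_-=2^{m-2}-2^{(m-3)/2},\qquad N_0=2^{m-1}.
\]

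The third step is assembly via Theorem~\ref{CodeMaker}. Because $2n_f=2^m>\sigma$ for $m\ge 3$, the hypothesis $2n_f+\hat f(w)\ne 0$ is satisfied and the theorem produces a binary linear code of length $n_f$ and dimension $m$. The contribution $w=0$ is the zero codeword, whereas for $w\ne 0$ the weight is $(2n_f+\hat f(w))/4=2^{m-2}+\hat f(w)/4$, which takes the three values $2^{m-2}-2^{(m-3)/2}$, $2^{m-2}$, and $2^{m-2}+2^{(m-3)/2}$ with multiplicities $N_-$, $N_0-1=2^{m-1}-1$, and $N_+$ respectively. This is exactly the stated weight enumerator, and since $N_->0$ for every odd $m\ge 3$, the minimum distance is $2^{m-2}-2^{(m-3)/2}$. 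The one nontrivial ingredient is the Gold Walsh spectrum, so the main obstacle is just to quote or briefly re-derive it (e.g.\ via the radical of the symplectic form $B(x,y)=\Tr(xy^{2^k}+x^{2^k}y)$, shown to be one-dimensional when $\gcd(k,m)=1$ and $m$ is odd). Everything else is Parseval bookkeeping.
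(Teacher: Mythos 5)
The paper does not actually prove this statement: it is quoted verbatim as ``a special case of Corollary 7 of \cite{Ding-Discrete}'', so there is no internal argument to compare against. Your proof is correct and self-contained, and it is in substance the standard derivation underlying the cited corollary: identify $T_{2^k+1}$ as the support of the quadratic Boolean function $f(x)=\Tr(x^{2^k+1})$, invoke the Gold/semi-bent Walsh spectrum $\{0,\pm 2^{(m+1)/2}\}$ --- which, as you indicate, follows from the radical of $B(x,y)=\Tr(xy^{2^k}+x^{2^k}y)$ being $\ftwo$ when $\gcd(2k,m)=1$ --- and then pin down the multiplicities with the three moment/Parseval identities before feeding everything into Theorem~\ref{CodeMaker}. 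Each step checks out: $\gcd(2^k+1,2^m-1)=1$ gives $n_f=2^{m-1}$; the counts $N_{+}=2^{m-2}+2^{(m-3)/2}$, $N_{-}=2^{m-2}-2^{(m-3)/2}$, $N_0=2^{m-1}$ are right (and indeed $N_0$ can also be read off directly, since $\hat f(w)=0$ exactly when $\Tr(w)=0$); and your reading of Theorem~\ref{CodeMaker}, with $w=0$ giving the zero codeword and $w\neq 0$ giving weight $(2n_f+\hat f(w))/4$, is the intended one. One point worth flagging explicitly: your computation yields length $n_f=|T_{2^k+1}|=2^{m-1}$, not the $2^{m-1}-1$ printed in the theorem statement; the printed length is a typo (the application in Section~\ref{Codes} requires length $2^{m-1}$, and the $2^m$ codewords of the displayed weight enumerator are consistent with dimension $m$ and your value of the length), so what you have proved is the corrected statement rather than the literal one.
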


\subsection{Partial resolution of Conjecture \ref{Ding-conj-codes}}
Here is the main result of this section.
\begin{theorem}
	Let $m\ge 5$ be an odd integer. Then for every $f$ appearing below , the binary linear code $\mathcal{C}_{D(f)^*}$ is a $[2^{m-1}, m, 2^{m-2}-2^{(m-3)/2}]$-code with weight enumerator polynomial
	\[
	1+(2^{m-2}-2^{(m-3)/2})z^{2^{m-2}-2^{(m-3)/2}}+(2^{m-1}-1)z^{2^{m-2}}+(2^{m-2}+2^{(m-3)/2})z^{2^{m-2}+2^{(m-3)/2}}.
	\]
	Furthermore,	The dual code of $\mathcal{C}_{D(f)^*}$ is  a $[2^{m-1},2^{m-1}-m,3]$-code.
	\begin{itemize}
		\item[(c)] $f_3(x)= x^{2^m-3}+x^{2^{(m+3)/2}+2^{(m+1)/2}+4}+x$.
		\item[(d)]$f_4(x)= x^{2^{m}-2^{(m-1)/2}-1}+x^{2^{(m-1)}-2^{(m-1)/2}}+x$.
		\item[(e)]$f_5(x)=x^{2^m-2-(2^{m-1}-2^2)/3}+x^{2^m-2^2-(2^m-2^3)/3}+x$.
		\item[(f)] $f_6(x)=x^{2^m-2^{(m+1)/2}+2^{(m-1)/2}}+x^{2^m-2^{(m+1)/2}-1}+x$.
		\item[(g)]$f_7(x)=x^{2^m-3(2^{(m+1)/2}-1)}+x^{2^{(m+1)/2}+2^{(m-1)/2}-2}+x$.
		\item[(h)]$f_8(x)=x^{2^m-2^{m-2}-1}+x^{2^{m-1}-2}+x$.
		\item[(i)] $f_9(x)= x^{2^{m}-2^{(m+3)/2}-3}+x^{2^{(m+1)/2}+2}+x$.
		\item[(j)]$f_{10}(x)=x^{2^m-3(2^{(m-1)/2}+1)}+x^{2^{m-1}-1}+x$.
		\item[(k)]$f_{11}(x)=x^{2^m-5}+x^6+x$.
	\end{itemize}	
	
\end{theorem}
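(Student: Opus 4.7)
The plan is to leverage Theorem~\ref{PartitionPolys} together with the explicit descriptions of $D(f)^*$ that emerge in the proof of Theorem~\ref{Ding-conj-diff}, so that each of the nine polynomials $f_3,\ldots,f_{11}$ can be replaced by a single representative whose punctured value-set is one of two easily identifiable sets. The underlying reason this reduction is lossless is that $\mathcal{C}_D$ depends, up to coordinate permutation, only on the set $D$: hence $f\sim g$ forces $\mathcal{C}_{D(f)^*}$ and $\mathcal{C}_{D(g)^*}$ to share the same weight enumerator. Theorem~\ref{PartitionPolys} partitions $\{f_3,\ldots,f_{11}\}$ into the three classes (a), (c), (d) of that theorem (class (b) contains only $f_1,f_2$ and so is not in the hypothesis), and it suffices to handle one representative per class.

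For classes (c) and (d), with representatives $f(x)=x^{-\sigma/2}+x^{-(\sigma-1)/2}+x$ and $f_s(x)=x^{3\sigma+4}+x^{-2}+x$, Theorem~\ref{EndCase3} and the main theorem of Subsection 4.4 yield $(D(f)^*)^{\sigma+1}=T_1$ and $(D(f_s)^*)^3=T_1$. A routine gcd argument, using that $m$ is odd, shows that $\gcd(\sigma+1,2^m-1)=\gcd(3,2^m-1)=1$, so $x\mapsto x^{\sigma+1}$ and $x\mapsto x^3$ are bijections of $\ftwomstar$; inverting the two identities above gives $D(f)^*=T_{\sigma+1}$ and $D(f_s)^*=T_3$. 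Writing $\sigma+1=2^{(m+1)/2}+1$ and $3=2^1+1$, and noting $\gcd((m+1)/2,m)=\gcd(1,m)=1$, Theorem~\ref{PowersOf2DifferenceSets} now delivers in both instances exactly the $[2^{m-1},m,2^{m-2}-2^{(m-3)/2}]$-code with the prescribed weight enumerator.

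For class (a), with representative $f(x)=x^{-4}+x^6+x$, Theorem~\ref{EndCase1} asserts that $D(f)^*$ and $D(g)^*$ partition $\ftwomstar$, where $g(x)=x^6+x$. Setting $A=\ftwom\setminus D(f)^*$ gives $|A|=2^{m-1}$, $0\in A$, and $A\setminus\{0\}=D(g)^*=D_6$ in the notation of Theorem~\ref{Segre}. That theorem supplies the enumerator of $\mathcal{C}_{A\setminus\{0\}}$, in which the multiplicities $2^{m-2}\pm 2^{(m-3)/2}$ are attached to $z^{2^{m-2}\mp 2^{(m-3)/2}}$; Lemma~\ref{ComplementCode} then interchanges these two multiplicities upon passing to $\mathcal{C}_{D(f)^*}$, producing precisely the target enumerator. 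In all three classes, the dual code statement $[2^{m-1},2^{m-1}-m,3]$ then follows directly from Theorem~\ref{DualCodeParameters}.

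The main obstacle is not analytic but bookkeeping: one must align the intricate exponent expressions of $f_3,\ldots,f_{11}$ with the representatives of Theorem~\ref{PartitionPolys}, verify the gcd conditions that promote $(D(f)^*)^{\sigma+1}=T_1$ to the cleaner identification $D(f)^*=T_{\sigma+1}$, and invoke Lemma~\ref{ComplementCode} in the correct direction so that the swap of small- and large-weight multiplicities lands on the target enumerator rather than on that of $\mathcal{C}_{D_6}$ itself.
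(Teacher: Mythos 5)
Your proposal is correct and follows essentially the same route as the paper: reduce via Theorem~\ref{PartitionPolys} to the three representatives, identify $D(f)^*$ with the complement of $D_6\cup\{0\}$ (handled by Lemma~\ref{ComplementCode} and Theorem~\ref{Segre}) in class (a) and with $T_{\sigma+1}$, $T_3$ (handled by Theorem~\ref{PowersOf2DifferenceSets}) in classes (c), (d), then get the dual parameters from Theorem~\ref{DualCodeParameters}. Your explicit gcd argument inverting $(D(f)^*)^{\sigma+1}=T_1$ and $(D(f_s)^*)^3=T_1$ to $D(f)^*=T_{\sigma+1}$ and $D(f_s)^*=T_3$ just spells out a step the paper leaves implicit.
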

\begin{proof}
Using  Theorem \ref{DualCodeParameters}, the claims of the theorem for dual codes would follow if we prove the claims of the theorem for the codes $\mathcal{C}_{D(f_3)^*},\ldots,\mathcal{C}_{D(f_{11})^*} $. Thus,  to prove the claims of the theorem for the the codes $\mathcal{C}_{D(f_3)^*},\ldots,\mathcal{C}_{D(f_{11})^*} $. Like Section 4, using Theorem~\ref{PartitionPolys}, to prove the theorem, it suffices to prove that the three rational functions $x^{-4}+x^6+x$, $ x^{3\sigma+4}+x^{-2}+x $ and  $x^{-\frac{\sigma}{2}}+x^{-\frac{\sigma - 1}{2}}+x $ where $\sigma=2^{\frac{m+1}{2}}$, give rise to binary linear $[2^{m-1}, m, 2^{m-2}-2^{(m-3)/2}]$-codes.

Now, let $f(x)=x^{-4}+x^6+x$. If we let $A=D_6 \cup \{0\}$ where 
\[D_6=\{x^6+x:x \in \ftwom\} \backslash \{0\}\]
as in Theorem~\ref{Segre}, then it follows from the proof of Theorem \ref{EndCase1} that  $D(f)^*$ is the complement of $A$ in $\ftwomstar$. Since $|A|=|D(f)^*|=2^{m-1}$,  the claim follows  using  Lemma ~\ref{ComplementCode} and Theorem~ \ref{Segre}.  So, the proof in this case is complete. Finally, since $D(x^{-\frac{\sigma}{2}}+x^{-\frac{\sigma - 1}{2}}+x)^{*}=T_{\sigma+1}$ and $D(x^{3\sigma+4}+x^{-2}+x)^{*}=T_3$ from the proofs of Theorems~\ref{EndCase3} and~\ref{EndCase4}, respectively, the proofs of the remaining cases follow  by applying Theorem~\ref{PowersOf2DifferenceSets} for $k=\frac{m+1}{2}$ and $k=1$.
\end{proof}
\section{Concluding Remarks}\label{Conclusions}
While trying to prove Theorem~\ref{Ding-conj-diff}, we had some observations. One observation that is worth mentioning is that if a polynomial $f(x)$ appears in the theorem, then  $f(x)/x+1$ is a two to one map whose value-set is a difference set with Singer parameters. To be more specific, the polynomials $f_i(x)/x+1$  for $i=1,2,\ldots,11$ are value-set equivalent to the following polynomials, respectively.
\begin{itemize}
	\item[(a)] $x^{-3}+x$. 
	\item[(b)] $x^4+x^3$.
	\item [(c)]$x^{-\sigma-1}+x$.
	\item[(d)] $x^{-3\sigma-4}+x$.
	\item[(e)] $x^2+x$.
	\item[(f)] $x^{\sigma+2}+x$.
	\item[(g)] $x^\sigma+x$.
	\item[(h)]$x^2+x$.
	\item [(i)] $x^{-\sigma-1}+x$.
	\item[(j)] $x^{\sigma+2}+x$.
	\item [(k)] $x+x^{-1}$.
\end{itemize}
We could not use the above observation towards a proof of Theorem~\ref{Ding-conj-diff}. It would be nice if the observation is used to prove Theorem~\ref{Ding-conj-diff} or it is used to obtain trinomials other than the ones appearing in the theorem whose value-sets are difference sets.
  
Finally, it should be mentioned that in order to prove the claim of Conjecture~\ref{Ding-conj-codes} for the polynomial $f_1(x)$ and $f_2(x)$ appearing in Theorem~\ref{Ding-conj-diff}, it suffices to prove Conjecture 34 of ~\cite{Ding-Discrete}.

\section*{Acknowledgements}

During the preparation of this paper we benefited from the free service provided by MAGMA computer algebra system.

\begin{bibdiv}
\begin{biblist}

\bib{Anderson}{book}{
	author={Anderson, Ian},
	title={Combinatorial designs and tournaments},
	series={Oxford Lecture Series in Mathematics and its Applications},
	volume={6},
	publisher={The Clarendon Press, Oxford University Press, New York},
	date={1997},
	pages={xii+237},
	isbn={0-19-850029-7},
	review={\MR{1631007}},
}

\bib{Berlekamp}{article}{
	author={Berlekamp, E. R.},
	title={An analog to the discriminant over fields of characteristic two},
	journal={J. Algebra},
	volume={38},
	date={1976},
	number={2},
	pages={315--317},
	issn={0021-8693},
	review={\MR{404197}},
	doi={10.1016/0021-8693(76)90222-2},
}

\bib{Bluher1}{article}{
	author={Bluher, Antonia W.},
	title={On $x^{q+1}+ax+b$},
	journal={Finite Fields Appl.},
	volume={10},
	date={2004},
	number={3},
	pages={285--305},
	issn={1071-5797},
	review={\MR{2067599}},
	doi={10.1016/j.ffa.2003.08.004},
}

\bib{Dillon-Dobbertin}{article}{
	author={Dillon, J. F.},
	author={Dobbertin, Hans},
	title={New cyclic difference sets with Singer parameters},
	journal={Finite Fields Appl.},
	volume={10},
	date={2004},
	number={3},
	pages={342--389},
	issn={1071-5797},
	review={\MR{2067603}},
	doi={10.1016/j.ffa.2003.09.003},
}

\bib{Ding-Discrete}{article}{
	author={Ding, Cunsheng},
	title={A construction of binary linear codes from Boolean functions},
	journal={Discrete Math.},
	volume={339},
	date={2016},
	number={9},
	pages={2288--2303},
	issn={0012-365X},
	review={\MR{3512343}},
	doi={10.1016/j.disc.2016.03.029},
}

\bib{Ding-Book}{book}{
	author={Ding, Cunsheng},
	title={Codes from difference sets},
	publisher={World Scientific Publishing Co. Pte. Ltd., Hackensack, NJ},
	date={2015},
	pages={xii+341},
	isbn={978-981-4619-35-6},
	review={\MR{3289983}},
}

\bib{DingCode}{article}{
	author={Ding, Cunsheng},
	title={Linear codes from some 2-designs},
	journal={IEEE Trans. Inform. Theory},
	volume={61},
	date={2015},
	number={6},
	pages={3265--3275},
	issn={0018-9448},
	review={\MR{3352515}},
	doi={10.1109/TIT.2015.2420118},
}

\bib{DingLuo}{article}{
	author={Ding, Cunsheng},
	author={Luo, Jinquan},
	author={Niederreiter, Harald},
	title={Two-weight codes punctured from irreducible cyclic codes},
	conference={
		title={Coding and cryptology},
	},
	book={
		series={Ser. Coding Theory Cryptol.},
		volume={4},
		publisher={World Sci. Publ., Hackensack, NJ},
	},
	date={2008},
	pages={119--124},
	review={\MR{2482328}},
	doi={10.1142/97898128322450009},
}

\bib{DingHarald}{article}{
	author={Ding, Cunsheng},
	author={Niederreiter, Harald},
	title={Cyclotomic linear codes of order 3},
	journal={IEEE Trans. Inform. Theory},
	volume={53},
	date={2007},
	number={6},
	pages={2274--2277},
	issn={0018-9448},
	review={\MR{2321882}},
	doi={10.1109/TIT.2007.896886},
}

\bib{Dobbertin}{article}{
	author={Dobbertin, Hans},
	title={Kasami power functions, permutation polynomials and cyclic
		difference sets},
	conference={
		title={Difference sets, sequences and their correlation properties},
		address={Bad Windsheim},
		date={1998},
	},
	book={
		series={NATO Adv. Sci. Inst. Ser. C Math. Phys. Sci.},
		volume={542},
		publisher={Kluwer Acad. Publ., Dordrecht},
	},
	date={1999},
	pages={133--158},
	review={\MR{1735396}},
}

\bib{Glynn}{article}{
	author={Glynn, David G.},
	title={Two new sequences of ovals in finite Desarguesian planes of even
		order},
	conference={
		title={Combinatorial mathematics, X},
		address={Adelaide},
		date={1982},
	},
	book={
		series={Lecture Notes in Math.},
		volume={1036},
		publisher={Springer, Berlin},
	},
	date={1983},
	pages={217--229},
	review={\MR{731584}},
	doi={10.1007/BFb0071521},
}

\bib{Gologlu}{article}{
	author={G\"{o}lo\u{g}lu, Faruk},
	author={Krasnayov\'{a}, D\'{a}\v{s}a},
	title={Proofs of several conjectures on linear codes from Boolean
		functions},
	journal={Discrete Math.},
	volume={342},
	date={2019},
	number={2},
	pages={572--583},
	issn={0012-365X},
	review={\MR{3877333}},
	doi={10.1016/j.disc.2018.10.022},
}

\bib{Robert}{article}{
	author={G\"{o}lo\u{g}lu, Faruk},
	author={Granger, Robert},
	author={McGuire, Gary},
	author={Zumbr\"{a}gel, Jens},
	title={On the function field sieve and the impact of higher splitting
		probabilities: application to discrete logarithms in $\mathbb{F}_{2^{1971}}$
		and $\mathbb{F}_{2^{3164}}$},
	conference={
		title={Advances in cryptology---CRYPTO 2013. Part II},
	},
	book={
		series={Lecture Notes in Comput. Sci.},
		volume={8043},
		publisher={Springer, Heidelberg},
	},
	date={2013},
	pages={109--128},
	review={\MR{3126472}},
}

\bib{Helleseth-1}{article}{
	author={Helleseth, Tor},
	author={Kholosha, Alexander},
	title={On the equation $x^{2^l+1}+x+a=0$ over ${\rm GF}(2^k)$},
	journal={Finite Fields Appl.},
	volume={14},
	date={2008},
	number={1},
	pages={159--176},
	issn={1071-5797},
	review={\MR{2381484}},
	doi={10.1016/j.ffa.2007.09.009},
}

\bib{Helleseth-2}{article}{
	author={Helleseth, Tor},
	author={Kholosha, Alexander},
	title={$x^{2^l+1}+x+a$ and related affine polynomials over ${\rm
			GF}(2^k)$},
	journal={Cryptogr. Commun.},
	volume={2},
	date={2010},
	number={1},
	pages={85--109},
	issn={1936-2447},
	review={\MR{2592432}},
	doi={10.1007/s12095-009-0018-y},
}

\bib{LN}{book}{
	author={Lidl, R.},
	author={Niederreiter, H.},
	title={Finite fields},
	series={Encyclopedia of Mathematics and its Applications},
	volume={20},
	edition={2},
	note={With a foreword by P. M. Cohn},
	publisher={Cambridge University Press, Cambridge},
	date={1997},
	pages={xiv+755},
	isbn={0-521-39231-4},
	review={\MR{1429394}},
}

\bib{Mac-Williams}{article}{
	author={MacWilliams, Jessie},
	title={A theorem on the distribution of weights in a systematic code},
	journal={Bell System Tech. J.},
	volume={42},
	date={1963},
	pages={79--94},
	issn={0005-8580},
	review={\MR{149978}},
	doi={10.1002/j.1538-7305.1963.tb04003.x},
}

\bib{Maschietti}{article}{
	author={Maschietti, Antonio},
	title={Difference sets and hyperovals},
	journal={Des. Codes Cryptogr.},
	volume={14},
	date={1998},
	number={1},
	pages={89--98},
	issn={0925-1022},
	review={\MR{1608232}},
	doi={10.1023/A:1008264606494},
}

\bib{Mesnager1}{article}{
	author={Kim, Kwang Ho},
	author={Choe, Junyop},
	author={Mesnager, Sihem},
	title={Solving $X^{q+1}+X+a=0$ over finite fields},
	journal={Finite Fields Appl.},
	volume={70},
	date={2021},
	pages={101797, 16},
	issn={1071-5797},
	review={\MR{4192808}},
	doi={10.1016/j.ffa.2020.101797},
}

\bib{Mesnager2}{article}{
	author={Kim, Kwang Ho},
	author={Mesnager, Sihem},
	title={Solving $x^{2^k+1}+x+a=0$ in $\Bbb F_{2^n}$ with ${\rm gcd}(n,
		k)=1$},
	journal={Finite Fields Appl.},
	volume={63},
	date={2020},
	pages={101630, 15},
	issn={1071-5797},
	review={\MR{4054573}},
	doi={10.1016/j.ffa.2019.101630},
}

\bib{Pless-moments}{article}{
	author={Pless, Vera},
	title={Power moment identities on weight distributions in error
		correcting codes},
	journal={Information and Control},
	volume={6},
	date={1963},
	pages={147--152},
	issn={0019-9958},
	review={\MR{262004}},
}

\bib{Jungnickel}{article}{
	author={Jungnickel, Dieter},
	title={Difference sets},
	conference={
		title={Contemporary design theory},
	},
	book={
		series={Wiley-Intersci. Ser. Discrete Math. Optim.},
		publisher={Wiley, New York},
	},
	date={1992},
	pages={241--324},
	review={\MR{1178504}},
}

\end{biblist}

\end{bibdiv}
\addresseshere
\end{document}